\newcommand{\brsq}[1]{\left[#1\right]}
\newcommand{\globalcolor}[1]{%
  \color{#1}\global\let\default@color\current@color
}
\newif\ifdark
\definecolor{darkred}{rgb}{0.9,0.2,0.2}
\definecolor{darkblue}{rgb}{0.7,0.3,1}
\definecolor{darkgreen}{rgb}{0.1,0.9,0.1}
\definecolor{franck}{rgb}{0,0.8,1}
\definecolor{pagebackground}{rgb}{.15,.21,.18}
\definecolor{pageforeground}{rgb}{.84,.84,.85}
\definecolor{symbols}{rgb}{0,0.7,1}
\colorlet{connection}{red!80!black}
\colorlet{boxcolor}{blue!50}
\definecolor{darkred}{rgb}{0.7,0.1,0.1}
\definecolor{darkblue}{rgb}{0.4,0.1,0.8}
\definecolor{darkgreen}{rgb}{0.1,0.7,0.1}
\definecolor{franck}{rgb}{0,0,1}
\definecolor{pagebackground}{rgb}{1,1,1}
\definecolor{pageforeground}{rgb}{0,0,0}
\colorlet{symbols}{blue!90!black}
\colorlet{connection}{red!30!black}
\colorlet{boxcolor}{blue!50!black}
\def\slash{\leavevmode\unskip\kern0.18em/\penalty\exhyphenpenalty\kern0.18em}
\def\dash{\leavevmode\unskip\kern0.18em--\penalty\exhyphenpenalty\kern0.18em}
\DeclareMathAlphabet{\mathbbm}{U}{bbm}{m}{n}
\DeclareFontFamily{U}{BOONDOX-calo}{\skewchar\font=45 }
\DeclareFontShape{U}{BOONDOX-calo}{m}{n}{
  <-> s*[1.05] BOONDOX-r-calo}{}
\DeclareFontShape{U}{BOONDOX-calo}{b}{n}{
  <-> s*[1.05] BOONDOX-b-calo}{}
\DeclareMathAlphabet{\mcb}{U}{BOONDOX-calo}{m}{n}
\SetMathAlphabet{\mcb}{bold}{U}{BOONDOX-calo}{b}{n}
\setlist{noitemsep,topsep=4pt,leftmargin=1.5em}
\DeclareMathAlphabet{\mathbbm}{U}{bbm}{m}{n}
\DeclareMathAlphabet{\mcb}{U}{BOONDOX-calo}{m}{n}
\SetMathAlphabet{\mcb}{bold}{U}{BOONDOX-calo}{b}{n}
\DeclareFontFamily{U}{mathx}{\hyphenchar\font45}
\DeclareFontShape{U}{mathx}{m}{n}{
      <5> <6> <7> <8> <9> <10>
      <10.95> <12> <14.4> <17.28> <20.74> <24.88>
      mathx10
      }{}
\DeclareSymbolFont{mathx}{U}{mathx}{m}{n}
\DeclareMathSymbol{\bigtimes}{1}{mathx}{"91}
\providecommand{\figures}{false}
{ \ifthenelse{\equal{\figures}{false}} {#1}{\[ {\rm Figure \ missing !} \]} }{}
\let\graftI\curvearrowright
\def\graftID{\mathrel{\textcolor{connection}{\boldsymbol{\curvearrowright}}}}
\def\CP{\mathcal{P}}
\def\CC{\mathcal{C}}
\tikzstyle{tinydots}=[dash pattern=on \pgflinewidth off \pgflinewidth]
\tikzstyle{superdense}=[dash pattern=on 4pt off 1pt]
\def\eps{\varepsilon}
\DeclareMathAlphabet{\pazocal}{OMS}{zplm}{m}{n}
\def\calA{\pazocal{A}}
\def\calF{\pazocal{F}}
\def\calH{\pazocal{H}}
\def\calO{\pazocal{O}}
\def\calP{\pazocal{P}}
\def\calS{\pazocal{S}}
\def\${|\!|\!|}
\def\SS{\mathfrak{S}}
\def\VV{\mathfrak{V}}
\newenvironment{DIFnomarkup}{}{} 
\newfont{\indic}{bbmss12}
\def\Nabla_#1{\nabla_{\!#1}}
    \pgfmathsetlength{\pgf@xb}{\pgfkeysvalueof{/pgf/outer xsep}}%
    \pgfmathsetlength{\pgf@yb}{\pgfkeysvalueof{/pgf/outer ysep}}%
\def\symbol#1{\textcolor{symbols}{#1}}
\def\decorate#1#2{
        \ifnum#2>0
    		\foreach \count in {1,...,#2}{
	       	let
				\p1 = (sourcenode.center),
                \p2 = (sourcenode.east),
				\n1 = {\x2-\x1},
				\n2 = {1mm},
				\n3 = {(1.3+0.6*(\count-1))*\n1},
				\n4 = {0.7*\n1}
			in 
        		node[rectangle,fill=symbols,rotate=30,inner sep=0pt,minimum width=0.2*\n2,minimum height=\n2] at ($(sourcenode.center) + (\n3,\n4)$) {}
				}
		\fi
        \ifnum#1>0
    		\foreach \count in {1,...,#1}{
	       	let
				\p1 = (sourcenode.center),
                \p2 = (sourcenode.east),
				\n1 = {\x2-\x1},
				\n2 = {1mm},
				\n3 = {(1.3+0.6*(\count-1))*\n1},
				\n4 = {0.7*\n1}
			in 
        		node[rectangle,fill=symbols,rotate=-30,inner sep=0pt,minimum width=0.2*\n2,minimum height=\n2] at ($(sourcenode.center) + (-\n3,\n4)$) {}
				}
		\fi
}
\tikzset{
    dectriangle/.style 2 args={
        triangle,
        alias=sourcenode,
        append after command={\decorate{#1}{#2}}
    },
    dectriangle/.default={0}{0},
}
\tikzset{
	cross/.style={path picture={ 
  		\draw[symbols]
			(path picture bounding box.south east) -- (path picture bounding box.north west) (path picture bounding box.south west) -- (path picture bounding box.north east);
		}},
root/.style={circle,fill=green!50!black,inner sep=0pt, minimum size=1.2mm},
        dot/.style={circle,fill=pageforeground,inner sep=0pt, minimum size=1mm},
        blank/.style={circle,fill=white,inner sep=0pt, minimum size=1mm},
        dotred/.style={circle,fill=pageforeground!50!pagebackground,inner sep=0pt, minimum size=2mm},
        var/.style={circle,fill=pageforeground!10!pagebackground,draw=pageforeground,inner sep=0pt, minimum size=3mm},
        var1/.style={circle,fill=pagebackground,draw=pageforeground,inner sep=0pt, minimum size=5mm},
         var2/.style={circle,fill=pagebackground,draw=pageforeground,inner sep=0pt, minimum size=2.5mm},
        sqvar/.style={rectangle,fill=pageforeground!10!pagebackground,draw=pageforeground,inner sep=0pt, minimum size=3mm},
        kernel/.style={semithick,shorten >=2pt,shorten <=2pt},
        kernels/.style={snake=zigzag,shorten >=2pt,shorten <=2pt,segment amplitude=1pt,segment length=4pt,line before snake=2pt,line after snake=5pt,},
        rho/.style={densely dashed,semithick,shorten >=2pt,shorten <=2pt},
           testfcn/.style={dotted,semithick,shorten >=2pt,shorten <=2pt},
        renorm/.style={shape=circle,fill=pagebackground,inner sep=1pt},
        labl/.style={shape=rectangle,fill=pagebackground,inner sep=1pt},
        xic/.style={very thin,circle,draw=symbols,fill=symbols,inner sep=0pt,minimum size=1.2mm},
        g/.style={very thin,rectangle,draw=symbols,fill=symbols!10!pagebackground,inner sep=0pt,minimum width=2.5mm,minimum height=1.2mm},
        xi/.style={very thin,circle,draw=symbols,fill=symbols!10!pagebackground,inner sep=0pt,minimum size=1.2mm},
	xies/.style={very thin,rectangle,fill=green!50!black!25,draw=symbols,inner sep=0pt,minimum size=1.1mm},
	xiesf/.style={very thin,rectangle,fill=green!50!black,draw=symbols,inner sep=0pt,minimum size=1.1mm},
        xix/.style={very thin,crosscircle,fill=symbols!10!pagebackground,draw=symbols,inner sep=0pt,minimum size=1.2mm},
        X/.style={very thin,cross,rectangle,fill=pagebackground,draw=symbols,inner sep=0pt,minimum size=1.2mm},
	xib/.style={thin,circle,fill=symbols!10!pagebackground,draw=symbols,inner sep=0pt,minimum size=1.6mm},
	xie/.style={thin,circle,fill=green!50!black,draw=symbols,inner sep=0pt,minimum size=1.6mm},
	xid/.style={thin,circle,fill=symbols,draw=symbols,inner sep=0pt,minimum size=1.6mm},
	xibx/.style={thin,crosscircle,fill=symbols!10!pagebackground,draw=symbols,inner sep=0pt,minimum size=1.6mm},
	kernels2/.style={very thick,draw=connection,segment length=12pt},
	keps/.style={thin,draw=symbols,->},
	kepspr/.style={thick,draw=connection,->},
	krho/.style={thin,draw=symbols,superdense,->},
	krhopr/.style={thick,draw=connection,superdense,->},
	triangle/.style = { regular polygon, regular polygon sides=3},
	not/.style={thin,circle,draw=connection,fill=connection,inner sep=0pt,minimum size=0.5mm},
	diff/.style = {very thin,draw=symbols,triangle,fill=red!50!black,inner sep=0pt,minimum size=1.6mm},
	diff1/.style = {very thin,dectriangle={1}{0},fill=red!50!black,draw=symbols,inner sep=0pt,minimum size=1.6mm},
	diff2/.style = {very thin,dectriangle={1}{1},fill=red!50!black,draw=symbols,inner sep=0pt,minimum size=1.6mm},
		diffmini/.style = {very thin,rectangle,fill=black,draw=black,inner sep=0pt,minimum size=0.75mm},
	 kernelsmod/.style={very thick,draw=connection,segment length=12pt},
	 rec/.style = {very thin,rectangle,fill=black,draw=black,inner sep=0pt,minimum size=2mm},
	cerc/.style={very thin,circle,draw=black,fill=symbols,inner sep=0pt,minimum size=2mm},
	stars/.style={very thin,star,star points=6,star point ratio=0.5, draw=black,fill=red,inner sep=0pt,minimum size=0.7mm},
	>=stealth,
        }
\def\DeclareSymbol#1#2#3{%
	\expandafter\gdef\csname MH@symb@#1\endcsname{\tikzsetnextfilename{symbol#1}%
	\tikz[baseline=#2,scale=0.15,draw=symbols,line join=round]{#3}}%
	\expandafter\gdef\csname MH@symb@#1s\endcsname{\scalebox{0.75}{\tikzsetnextfilename{symbol#1}%
	\tikz[baseline=#2,scale=0.15,draw=symbols,line join=round]{#3}}}%
	\expandafter\gdef\csname MH@symb@#1ss\endcsname{\scalebox{0.65}{\tikzsetnextfilename{symbol#1}%
	\tikz[baseline=#2,scale=0.15,draw=symbols,line join=round]{#3}}}%
	}
\def\<#1>{\ifthenelse{\boolean{mmode}}{\mathchoice{\csname MH@symb@#1\endcsname}{\csname MH@symb@#1\endcsname}{\csname MH@symb@#1s\endcsname}{\csname MH@symb@#1ss\endcsname}}{\csname MH@symb@#1\endcsname}}
 \def\1{\mathbf{\symbol{1}}}
\def\eps{\varepsilon}
\DeclareMathAlphabet{\mathpzc}{OT1}{pzc}{m}{it}
\let\d\partial
\let\eps\varepsilon
\def\eqref#1{(\ref{#1})}
\def\geo{{\text{\rm\tiny geo}}}
\DeclareMathOperator{\Lie}{Lie}
\DeclareMathOperator{\LA}{LieAdm}
\DeclareMathOperator{\ComMag}{ComMag}
\DeclareMathOperator{\PL}{PreLie}
\DeclareMathOperator{\Perm}{Perm}
\DeclareMathOperator{\Tw}{Tw}
\DeclareMathOperator{\MC}{MC}
\DeclareMathOperator{\NT}{\nabla{}Trees}
\DeclareMathAlphabet{\mathbbold}{U}{bbold}{m}{n}
\def\kk{\mathbbold{k}}
\newcommand*{\bigcdot}{}
\DeclareRobustCommand*{\bigcdot}{%
  \mathbin{\mathpalette\bigcdot@{}}%
}
\newcommand*{\bigcdot@scalefactor}{.5}
\newcommand*{\bigcdot@widthfactor}{1.15}
\newcommand*{\bigcdot@}[2]{%
  \sbox0{$#1\vcenter{}$}
  \sbox2{$#1\cdot\m@th$}%
  \hbox to \bigcdot@widthfactor\wd2{%
    \hfil
    \raise\ht0\hbox{%
      \scalebox{\bigcdot@scalefactor}{%
        \lower\ht0\hbox{$#1\bullet\m@th$}%
      }%
    }%
    \hfil
  }%
}
\def\act{\bigcdot}
\def\two{{\<generic>\kern0.05em\<genericb>}}
\def\twoI{{\<Ito>\kern0.05em\<Itob>}}
\begin{document}

\title{Chain rule symmetry for singular SPDEs}
\author{Yvain Bruned$^1$, Vladimir Dotsenko$^2$}
\institute{ 
	IECL (UMR 7502), Universit\'e de Lorraine
	\and IRMA (UMR 7501), Universit\'e de Strasbourg \\
	Email:\ \begin{minipage}[t]{\linewidth}
		\texttt{yvain.bruned@univ-lorraine.fr},
		\\ \texttt{vdotsenko@unistra.fr}.
\end{minipage}}

\maketitle

\begin{abstract}
We characterise the chain rule symmetry for the geometric stochastic heat equations in the full subcritical regime for Gaussian and non-Gaussian noises. We show that the renormalised counter-terms that give a solution invariant under changes of coordinates are generated by iterations of covariant derivatives. 
The result was known only for space-time white noises, with a very specific proof that so far could not be extended to the general case. The key idea of the present paper is to change the perspective on several levels and to use ideas coming from operad theory and homological algebra. Concretely, we introduce the operad of Christoffel trees that captures the counter-terms of the renormalised equation; our main new insight is to describe the space of invariant terms homologically, using a suitable perturbation of the differential of the operadic twisting of that operad. As a consequence, we obtain the correct renormalisation for the quasi-linear KPZ equation in the subcritical regime completing the programme started by Hairer and Gerencser. Previously, the main algebraic tool used in the study of singular SPDEs were Hopf algebras of decorated trees; our work shows that operad theory and homological algebra add new powerful tools with immediate  applications to open problems that were out of reach by other methods.
\end{abstract}

\setcounter{tocdepth}{2}
\tableofcontents

\section{Introduction}

\subsection{Motivation, context, and methods: the SPDE side}
For stochastic differential equations (SDEs) with smooth coefficients driven by independent Brownian motions, one can get two notions of solution depending on the choice of the stochastic product. The first one called Itô solution guarantees that one gets the so called Itô Isometry for this product. The second one is the Stratonovich solution as one gets  equivariance under changes of coordinates.
One cannot get the two symmetries at the same time in this finite dimensional case as there are not enough degrees of freedom. Indeed, one starts with a one parameter family of solutions out of which only two points are relevant (one point for Itô and one point for Stratonovich) giving only one symmetry at a time. 

In the infinite dimensional case, things are somehow different. It has been shown in \cite{BGHZ} that the two symmetries can cohexist for geometric stochastic heat equations of the form
\begin{equ}[e:genClass_intro]
	\d_t u^\alpha = \d_x^2 u^\alpha + \Gamma^\alpha_{\beta\gamma}(u)\,\d_x u^\beta\d_x u^\gamma
	+ K^\alpha_\beta(u)\,\d_x u^\beta
	+h^\alpha(u) + \sigma_i^\alpha(u)\, \xi_i\;,
\end{equ}
where $ i \in \lbrace 1,...,m \rbrace $ and the functions
$\Gamma^\alpha_{\beta\gamma},\sigma_i^\alpha:\mathbb{R}^d\to\mathbb{R}$ with $\Gamma^\alpha_{\beta\gamma}=\Gamma^\alpha_{\gamma\beta}$ are smooth. 
Here $u:\mathbb{R}_+\times \mathbb{T} \mapsto\mathbb{R}^d$ and the $ \xi_i $ are independent space-time white noises. Itô Isometry and chain rule give a natural choice of solution for \eqref{e:genClass_intro} which is a stochastic partial differential equation (SPDE). Equation \eqref{e:genClass_intro} is motivated from a geometric context when one sees $\Gamma$ as the Christoffel symbols for an arbitrary connection on $\mathbb{R}^d$ and,
for each $i$, the $(\sigma_i^\alpha)_\alpha$ as the components of a vector field on~$\mathbb{R}^d$. It provides a natural stochastic
process taking values in the space of loops in a compact Riemannian manifold. Its invariant measure is expected to be the Brownian loop measure. 

This equation was first considered with coloured noise in space in \cite{F92}. Having a space-time white noise transforms this equation into a singular SPDEs with distributional products. This requires the use of recent techniques such as the theory of Regularity Structures in order to provide a notion of solution. Actually, one of the reasons for which Regularity Structures were invented was for treating this equation. 
Before, for $d=1$ and only one noise, some simple versions of the equation were considered with rough paths techniques such as Burgers type equations ($\Gamma_{\beta \gamma}^{\alpha} = 0, \sigma_i^\alpha(u) = \sigma$) in \cite{Rough11} and the KPZ equation ($\Gamma_{\beta \gamma}^{\alpha} = 1,\sigma_i^\alpha(u) = 1$)  in \cite{KPZ}. With the Regularity Structures black box developed in \cite{reg,BHZ,CH,BCCH}, one is able to solve \eqref{e:genClass_intro} and to produce a renormalised equation where the space-time white noises are replaced by a regularised version $ \xi_i^{\eps} $ that converges to $ \xi_i $ when $\eps$ is sent to zero. The renormalised equation is parametrised by $54$ renormalisation constants which gives a finite dimensional space of solutions. For a review on the theory of Regularity Structures see \cite{MR4174393,BH21}.
   
The work \cite{BGHZ} selects a natural solution out of this finite dimensional space by using the symmetries of the chain rule and the Itô Isometry. This result was partially annouced in 2016 via the proceeding \cite{hairer2016motion}. The uniqueness of the solution relies on a precise dimension counting of the vector space associated to the two symmetries and it is mostly done by hand preventing any type of generalisation without the use of a new framework. By generalisation, we mean to consider other type of noises that could be non-Gaussian described by cumulants and/or more singular than the space-time white noise but still in the subcritical regime. These new noises produce a bigger finite dimensional space parametrising the renormalised equation.
   
In the present work, we focus on the chain rule symmetry and we provide a full characterisation of this space. Informally, our main result can be described in the next theorem. We suppose that the $ \xi_i $ are  independent identically distributed  noises satisfying the assumptions of convergence given in \cite{CH}. These noises could be Gaussian or non-Gaussian described by their cumulants. We denote by $ \xi_i^{\eps} $ their regularisation (see Section~\ref{geometric SPDEs} for a precise definition of the regularisation).
\begin{theorem}
   	\label{thm:main renormalisation_intro}
   	Let $u_0^{\alpha}\in\CC^r(\mathbb{T})$ for some $r>0$. 	there exist renormalisation constants $ C_{\eps}(\tau) $ such that
   	the renormalised equation of \eqref{e:genClass} is given by:
   	\begin{equs}[eq:renorm nonlocal intro1]
   		\d_t u^\alpha_{\eps} & = \d_x^2 u^\alpha_{\eps} + \Gamma^\alpha_{\beta\gamma}(u_{\eps})\,\d_x u^\beta_{\eps}\d_x u^\gamma_{\eps}
   		+ K^\alpha_\beta(u_{\eps})\,\d_x u^\beta_{\eps}
   		+h^\alpha(u_{\eps}) + \sigma_i^\alpha(u_{\eps})\, \xi_i^{\eps}\; \\ & + \sum_{\tau \in \VV_{\xi}} C_{\eps}(\tau)  \Upsilon_{\Gamma,\sigma}[\tau](u_{\eps})\,.
   	\end{equs}
   	where $ \VV_{\xi}  $ is a combinatorial set whose dimension as a vector space can be computed.  It depends on the choice of the noises $ \xi_i $ and the   $ \Upsilon_{\Gamma,\sigma}[\tau](u_{\eps}) $ are computed using the vector fields $ \sigma_i $ and 
   	the covariant derivative $\nabla_X Y$ defined for two vector fields $X,Y$ by  
   	\begin{equ}
   		(\nabla_X Y)^\alpha (u) = X^\beta(u)\,\d_\beta Y^\alpha(u) + \Gamma^\alpha_{\beta\gamma}(u) \,X^\beta(u) \,Y^\gamma(u)\;.
   	\end{equ}
   	The solution $u_\eps$ of the random PDEs \eqref{eq:renorm nonlocal intro1} converges as $\eps\to 0$ in probability, locally in time, to a nontrivial limit $u$.  The equations \eqref{eq:renorm nonlocal intro1} transform according to the chain rule under composition with diffeomorphisms.
\end{theorem}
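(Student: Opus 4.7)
The plan is to combine the standard BPHZ machinery of \cite{BHZ,CH,BCCH} with a new operadic and homological framework to pinpoint exactly which counter-terms preserve the chain rule. First, I would invoke the Regularity Structures black box: the subcriticality of \eqref{e:genClass_intro} together with the convergence assumptions on the $\xi_i$ produces, for any prescription of finite renormalisation constants indexed by a finite set of decorated trees, a family $u_\eps$ converging in probability to a nontrivial limit. The choice of constants is a priori arbitrary modulo the BPHZ subtraction, so the remaining task is to select them in a way compatible with changes of coordinates.

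Next, I would recast the chain-rule constraint algebraically. Under a diffeomorphism $\varphi$ the Christoffel symbols $\Gamma$ transform inhomogeneously while the vector fields $\sigma_i$ transform tensorially, so the renormalised equation transforms covariantly if and only if the counter-term $\sum_\tau C_\eps(\tau)\,\Upsilon_{\Gamma,\sigma}[\tau]$ assembles into a tensorial expression in $(\Gamma,\sigma)$. Encoding the elementary differentials $\Upsilon[\tau]$ through the operad of Christoffel trees introduced in this paper, this tensoriality becomes the condition of lying in the kernel of a differential built from the infinitesimal diffeomorphism action, which I would realise as a perturbation of the natural differential of the operadic twisting of this operad.

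The core step is then to compute the zeroth cohomology $H^0$ of this twisted complex: the set $\VV_\xi$ of the theorem is defined to index a basis of $H^0$, and its dimension is obtained from the cohomology computation. The perturbation records precisely the inhomogeneous part of the transformation law of $\Gamma$, so it vanishes on expressions built from covariant derivatives $\nabla_X Y$ in place of partial derivatives. A contracting homotopy, constructed by systematically replacing each occurrence of $\partial$ by $\nabla$ and absorbing the resulting Christoffel terms into lower-order tree strata, would identify $H^0$ with the vector subspace spanned by iterated covariant derivative expressions, yielding both the combinatorial description of $\VV_\xi$ and the explicit formula for $\Upsilon_{\Gamma,\sigma}[\tau]$.

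Finally, combining this characterisation of the admissible counter-terms with the BPHZ prescription restricted to the chain-rule-invariant subspace, I would define constants $C_\eps(\tau)$ for $\tau\in\VV_\xi$ for which \eqref{eq:renorm nonlocal intro1} is simultaneously convergent and diffeomorphism-covariant, completing the proof. The main obstacle is the homological computation: the Hopf-algebraic methods previously used in singular SPDEs are insensitive to the twisted differential, which is why the framework of \cite{BGHZ} could not be generalised. The construction of the contracting homotopy must be carried out carefully to avoid the combinatorial blow-up that previously forced hand-made dimension counts, and it is here that the operad of Christoffel trees and the machinery of operadic twisting play an essential and, to our knowledge, new role in the study of singular SPDEs.
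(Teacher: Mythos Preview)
Your overall architecture matches the paper: you correctly invoke the Regularity Structures black box for convergence, you recast the chain-rule constraint as membership in the kernel of a perturbed twisting differential on the operad of Christoffel trees, and you reduce the problem to a degree-zero homology computation. That much is exactly the strategy the paper follows.

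The divergence, and the gap, is in how you propose to carry out the homological computation. You suggest a ``contracting homotopy'' built by replacing $\partial$ by $\nabla$ and absorbing Christoffel terms into lower strata. This is heuristically appealing---it is essentially the change of generators that identifies the covariant derivative as the degree-zero generator---but it is not a proof. A contracting homotopy in the usual sense would make the complex acyclic, which is false (the homology is the operad $\LA$, not zero); what you really need is a deformation retraction onto a subcomplex identified with $\LA$, and you give no mechanism to construct one or to show that it respects the operad structure. In particular, nothing in your sketch establishes that the higher homology vanishes, nor that iterated covariant derivatives are linearly independent.

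The paper proceeds quite differently and more robustly. It filters the complex $(\PL\vee\ComMag\vee\kk\alpha,\ d_{\MC}+\mathrm{ad}_{\ell_1^\alpha}+d_0)$ by the number of thick edges, so that on the associated graded the perturbation $d_0$ disappears and one is left with $\Tw(\PL)\vee\ComMag$. One then invokes the known computation $H_\bullet(\Tw(\PL))\cong\Lie$ (from \cite{dotsenko2021homotopical}), giving $H_\bullet\cong\Lie\vee\ComMag\cong\LA$ concentrated in degree zero; the spectral sequence then collapses at the first page. Only after this does one specialise from the operad to free algebras on the appropriate number of generators and pass to invariants of the relevant symmetric group action to obtain the finite-dimensional spaces $\VV_\xi$. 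Your proposal omits both the crucial external input about $\Tw(\PL)$ and the specialisation/invariants step; without these, the ``replace $\partial$ by $\nabla$'' idea remains a guess at the answer rather than a computation of it.
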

   
This theorem is a consequence of Theorem \ref{thm:main renormalisation_general} and Theorem \ref{main_theorme_chain_rule}. As a corollary, we are able to obtain a solution theory in the full subcritical regime for a quasi-linear version of \eqref{e:genClass_intro}, completing the programme started in \cite{GH19,G20,BGN}
   
\begin{theorem}
   	\label{thm:main renormalisation_intro_quasi}
   	Let $u_0^{\alpha}\in\CC^r(\mathbb{T})$ for some $r>0$ and $ a : \mathbb{R}^d \rightarrow \mathbb{R} $ smooth such that $a$ takes values in $[\lambda,\lambda^{-1}]$ for some $\lambda>0$.	There exist smooth functions $ c \mapsto C^{c}_{\eps}(\tau) $ such that for
   	the renormalised equation:
   	\begin{equs}[eq:renorm nonlocal intro2]
   		\d_t u^\alpha_{\eps} & = a(u_{\eps})\d_x^2 u^\alpha_{\eps} + \Gamma^\alpha_{\beta\gamma}(u_{\eps})\,\d_x u^\beta_{\eps}\d_x u^\gamma_{\eps}
   		+ K^\alpha_\beta(u_{\eps})\,\d_x u^\beta_{\eps}
   		+h^\alpha(u_{\eps}) + \sigma_i^\alpha(u_{\eps})\, \xi_i^{\eps}\; \\ & + \sum_{\tau \in \VV_{\xi}} C^{a(u_{\eps})}_{\eps}(\tau)  \Upsilon_{\Gamma,\sigma}[\tau](u_{\eps})\,
   	\end{equs}
   	the solution $u_\eps$ of the random PDEs \eqref{eq:renorm nonlocal intro2} converges as $\eps\to 0$ in probability, locally in time, to a nontrivial limit $u$.  The equations \eqref{eq:renorm nonlocal intro2} transform according to the chain rule under composition with diffeomorphisms.
\end{theorem}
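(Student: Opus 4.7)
The approach is to reduce the quasi-linear statement to the semi-linear Theorem \ref{thm:main renormalisation_intro} via the quasi-linear regularity structures framework of \cite{GH19,G20,BGN}. The key idea there is to freeze the coefficient $a$ at a reference value $c \in [\lambda,\lambda^{-1}]$, treating $c$ as an external parameter, which turns the quasi-linear equation into a parameter-dependent semi-linear equation driven by the heat kernel $G_c$ of $\partial_t - c\partial_x^2$.

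First, for each fixed $c$, I would apply Theorem \ref{thm:main renormalisation_general} to the semi-linear equation obtained by replacing $a(u_\eps)$ with $c$. The combinatorial set $\VV_\xi$ and the counter-terms $\Upsilon_{\Gamma,\sigma}[\tau]$ depend only on the noise structure, the vector fields $\sigma_i$, and the Christoffel symbols, not on the heat kernel; only the BPHZ-type renormalisation constants, which involve contractions of $G_c$, depend on $c$. This produces the family $c \mapsto C^c_\eps(\tau)$, smooth in $c$ because $G_c$ depends smoothly on $c$ (being a scaled convolution kernel). One then substitutes $c = a(u_\eps(t,x))$ and invokes the quasi-linear machinery of \cite{BGN}, which provides the reconstruction theorem and convergence of the modelled distribution uniformly in the parameter $c$.

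For the chain rule invariance, I would fix a diffeomorphism $\varphi : \mathbb{R}^d \to \mathbb{R}^d$ and set $v_\eps = \varphi(u_\eps)$. Since $a$ is a scalar function on the target $\mathbb{R}^d$, it transforms as $a \mapsto a\circ\varphi^{-1}$, and the leading operator $a(u)\partial_x^2 u^\alpha$ transforms by the usual chain rule without generating new distributional anomalies. Consequently, at each point $(t,x)$ the pointwise chain rule analysis of Theorem \ref{main_theorme_chain_rule}, applied to the frozen semi-linear equation with parameter $c = a(u_\eps(t,x))$, yields a renormalised equation for $v_\eps$ of the claimed form, with constants $C^{a(\varphi^{-1}(v_\eps))}_\eps(\tau)$ and counter-terms $\Upsilon_{\Gamma',\sigma'}[\tau]$ expressed in the transformed coordinates.

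The main obstacle will be to show that the uniform-in-$c$ estimates required by the quasi-linear patching are compatible with the specific renormalisation constants selected by the chain rule symmetry. Concretely, one has to verify that the chain-rule constants, which are characterised homologically as elements of a specific kernel in the operadic twisting of the operad of Christoffel trees, depend smoothly on $c$ as it varies in the compact interval $[\lambda,\lambda^{-1}]$, and that this smoothness propagates through the BGN scheme without altering the subcritical power counting. Once this compatibility is established, all three conclusions of the theorem — existence of the $C^c_\eps(\tau)$, convergence of $u_\eps$, and chain rule covariance — follow from a fibrewise application of the semi-linear result.
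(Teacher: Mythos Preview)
Your proposal misses the central algebraic obstruction in the quasi-linear setting. The BGN machinery does \emph{not} hand you a renormalised equation indexed by the semi-linear set $\SS_\xi$; it produces counter-terms indexed by a strictly larger set $\hat{\SS}_\xi$ of trees carrying additional parameter-derivative decorations on their edges (recording derivatives $\partial_c^m$ of the $c$-dependent heat kernel), and the evaluation map $\Upsilon_{\hat F}$ involves the nonlinearity $\hat F$ of an implicit system containing the non-local quantity $\partial_c u_\eps^\alpha$ together with a prefactor $q^\alpha(u_\eps) = 1 - \partial_\alpha a'(u_\eps)\,\partial_c u_\eps^\alpha$. Thus the statement to be proved is precisely that these extra, non-local counter-terms collapse back onto $\SS_\xi$, and this is \emph{not} automatic from a fibrewise application of the semi-linear theorem.

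The paper's proof handles this via a genuinely algebraic step you have not identified. One introduces extended covariant derivatives $\nabla^m_{\tau_2}\tau_1$ which incorporate parameter derivatives, observes that the BPHZ choice forces the constants on $\hat{\SS}_\xi$ to be iterated $c$-derivatives of those on $\SS_\xi$ (so choosing chain-rule constants on $\SS_\xi$ propagates to chain-rule constants on $\hat{\SS}_\xi$ via $\hat{\VV}_\xi$), decomposes the full counter-term into a $\hat{\VV}_\xi$-part and an orthogonal part with finite limit, and then invokes two identities from \cite{BGN}: $\Upsilon_F[\nabla^m_{\tau_2}\tau_1]=0$ for $m>1$, and $\Upsilon_{\hat F}[\nabla_{\tau_2}\tau_1]+\Upsilon_{\hat F}[\nabla^1_{\tau_2}\tau_1]=q\,\Upsilon_F[\nabla_{\tau_2}\tau_1]$. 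These are what kill the parameter-derivative contributions and cancel the $q$ prefactor, yielding the local form over $\VV_\xi$. By contrast, the ``main obstacle'' you name (smooth dependence on $c$ and uniform estimates) is routine; the real work is this collapse, and without it your fibrewise argument does not close.
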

   
The strategy developed in \cite{BGHZ} for getting Theorem~\ref{thm:main renormalisation_intro} for space-time white noises was to characterise the combinatorial objects that give the chain rule as the kernel of a linear map denoted by  $  \hat \phi_\geo $. These combinatorial objects are actually decorated trees. Then, the proof mostly performed by hand was divided into two steps:
  \begin{itemize}
  	\item Find independent linear relations for $ \ker \hat \phi_\geo  $.
  	\item Compute the dimension of a subspace of decorated trees constructed from combinatorial covariant derivatives.
  \end{itemize}
One was able to conclude via a carefull dimension counting. It is easy to see that such a strategy cannot be extended to more singular noises as the number of decorated trees describing the renormalisation grows very fast as the number of vertices increases. The key idea of the present paper is to change the perspective on several levels. First, we interpret the kernel of $\hat \phi_\geo$ as the degree zero homology of a huge chain complex, which then allows us to use powerful tools coming from homological algebra to show that the homology of that complex is concentrated in degree zero. (This means that elements of higher homological degree, even though useful for the strategy of the proof, do not carry any extra useful information.) Next, we note that, while the SPDE considerations mean that we should identify some of the trees that we consider with one another, that identification, amounting to taking coinvariants of some finite groups, can be done before or after computing the homology, and doing it after computing the homology means that we use one extra tool, to harness the problem, namely the theory of operads. That allows us to do just one universal homology computation, from which then we can derive the answer in every particular case, specialising from an operad to a free algebra on a certain number of generators and taking (co)invariants.

\subsection{Motivation, context, and methods: the algebraic side}

In general, the notion of naturality with respect to the chain rule symmetries corresponds to the celebrated programme of classification of invariant differential operators initiated by Veblen \cite{zbMATH02573963} almost 100 years ago; in the case of invariant differential operators acting on vector fields and connections, a version of classification was obtained in 1950s by Schouten \cite{MR0066025}. In 1970s, Kirillov indicated that classification of invariant differential operators on the affine line can be interpreted in terms of cohomology of the Lie algebra of formal vector fields, see, e.g., \cite[p.~7]{MR0611158}. Around the same time, in cohomology computations for the Lie algebra of vector fields on an affine space $V$ of sufficiently large dimension, use of trees and other graphs to describe $GL(V)$-invariants was pioneered by Gelfand and Fuchs (for instance, one finds ``the readers will find it easier to understand this formula by means of the pictures which I. M . Gelfand and I used to represent the functionals $\Psi_r$ and other similar functionals'' on \cite[p.~81]{MR0874337}). A couple of decades later, the two stories were successfully brought together by Markl \cite{MR2503978} who described a rigorous graph complex formalism allowing one to determine, in sufficiently large dimensions, all invariant differential operators of the given type. 

The algebraic story presented in this paper is unravelled as follows. Using the combinatorics of trees that appear from the black box of Regularity Structures, we define a new natural operad of decorated trees, which we call the operad of Christoffel trees, and denote by $\NT$. Then, one has from  Proposition~\ref{operad_christoffel} 
\begin{equs}
\NT \cong \PL \vee \ComMag  
\end{equs}
where $ \PL $ is the operad of pre-Lie algebras, $\ComMag$ is the operad of commutative magmatic algebras and $ \vee $ is the coproduct of operads. We then define the universal map $  \hat \Phi_\geo $ on the level of the operad of Christoffel trees. To interpret that map conceptually, we make use of the so called operadic twisting \cite{MR3348138,MR3299688,MR4621635} that defines, for an operad $\calP$ concentrated in degree zero that is equipped with a map of operads 
$f\colon \Lie\to\calP$, a differential graded operad $\Tw(\calP)$ by
\[
\Tw(\calP)=\left(\calP\vee\kk\alpha, d_{\Tw}=d_{\MC}+\mathrm{ad}_{\ell_1^{\alpha}} \right)
\]
where
\[
d_{\MC}(\alpha)=-\frac12[\alpha,\alpha], \quad \ell_1^{\alpha}(a_1)=[\alpha,a_1],  
\]
and $\mathrm{ad}_{\ell_1^{\alpha}}(\mu)=\ell_1^{\alpha}\circ_1\mu-(-1)^{|\mu|}\sum_i\mu\circ_i\ell_1^{\alpha}$. Here we denote by $[-,-]$ the image of the generator of $\Lie$ in $\calP$ under the map~$f$. Here, $ \alpha $ is called a Maurer-Cartan element. In fact, we use a slight modification of this formalism: we construct in Proposition~\ref{correspondence_ker_phi_0}, a perturbation of the differential $d_{\Tw}$ such that the operad 
	\begin{equs} \label{operad_twisting}
	(\PL\vee\ComMag\vee \kk\alpha, d_{\MC}+\mathrm{ad}_{\ell_1^{\alpha}}+d_0)
	\end{equs}
has $\ker\hat\Phi_{\geo}$ as its the degree zero homology. Furthermore, we establish in Theorem~\ref{th:twisting} that the homology of the operad \eqref{operad_twisting} is concentrated in homological degree zero and is isomorphic to the operad $\LA$ of Lie-admissible algebras. This is done by an argument involving spectral sequences \cite[Ch.~5]{MR1269324}; one may say that the spectral sequence argument makes precise sense of the statement that $d_0$ is an ``insignificant'' perturbation of $d_{\Tw}$ that does not affect the size of the homology. This gives our main algebraic result:
\begin{theorem}\label{th:LA-operadic}
We have an operad isomorphism $ \ker\hat{\Phi}_{\geo} \cong \LA$. Moreover, that isomorphism identifies $\ker\hat{\Phi}_{\geo}$ as the linear span of iterations of covariant derivatives. 
\end{theorem}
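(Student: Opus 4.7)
The plan is to combine the two algebraic inputs from the preceding section. By Proposition~\ref{correspondence_ker_phi_0}, the kernel $\ker\hat{\Phi}_{\geo}$ is realised as the degree zero homology of the perturbed twisted operad
\[
\bigl(\PL\vee\ComMag\vee \kk\alpha,\; d_{\MC}+\mathrm{ad}_{\ell_1^{\alpha}}+d_0\bigr),
\]
and by Theorem~\ref{th:twisting} the total homology of this operad is concentrated in homological degree zero and canonically isomorphic to $\LA$. Composing these two identifications immediately yields the operad isomorphism $\ker\hat{\Phi}_{\geo}\cong\LA$. Thus the only genuine content left is the second sentence of the statement, namely the geometric description of this isomorphism in terms of iterated covariant derivatives.

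To establish that description, I would first observe that $\LA$ is generated, as an operad, by a single binary operation $(X,Y)\mapsto X\cdot Y$ subject only to the Lie-admissibility relation asserting that the associator is symmetric under cyclic permutations (equivalently, the antisymmetrisation $[X,Y]=X\cdot Y-Y\cdot X$ satisfies the Jacobi identity). The natural candidate for the image of this binary generator under the chain of identifications is the combinatorial covariant derivative $\Nabla_X Y$, built from the pre-Lie grafting of $\PL$ (corresponding to $X\cdot \partial Y$) plus the Christoffel correction coming from the commutative magmatic generator of $\ComMag$ (corresponding to $\Gamma(X,Y)$). In other words, the binary operation of $\LA$ should map to the class in $H_0$ represented by the two-leaf Christoffel tree whose decoration encodes $\nabla_X Y$.

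The bulk of the verification is then to check two things. First, that this element indeed lies in $\ker\hat\Phi_{\geo}$ (which is a short direct computation: the chain rule terms produced by $\hat\Phi_{\geo}$ on the pre-Lie part and on the Christoffel part cancel, which is exactly the combinatorial meaning of the fact that covariant derivatives transform tensorially). Second, that the induced map $\LA\to\ker\hat\Phi_{\geo}$ is an isomorphism of operads; since both sides are, arity by arity, finite-dimensional $\Sigma_n$-modules, it suffices to produce a surjection between them, as the isomorphism of the underlying vector spaces is already guaranteed by the composition of Proposition~\ref{correspondence_ker_phi_0} and Theorem~\ref{th:twisting}. Surjectivity amounts to showing that every class in the $H_0$ computed by Theorem~\ref{th:twisting} can be represented by an iterated $\Nabla$-monomial, which follows by an inductive argument on the number of internal vertices: the perturbed differential $d_0$ can be used to replace any Christoffel tree by a sum of trees in which all internal edges have been ``absorbed'' into nested covariant derivatives, modulo boundaries.

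The step I expect to be the main obstacle is making this last induction watertight. The delicate point is not the existence of the isomorphism, which is essentially forced once one has Theorem~\ref{th:twisting}, but the explicit identification of generators and relations: one has to match the Lie-admissibility relation of $\LA$ with the identity satisfied, modulo the perturbed differential, by iterated $\Nabla$'s in the operad of Christoffel trees. Concretely, one must verify that the associator of the induced product on $H_0$ is symmetric under cyclic permutations of its three inputs, and this requires unwinding the effect of $d_{\MC}+\mathrm{ad}_{\ell_1^{\alpha}}+d_0$ on the three-leaf Christoffel trees and observing that the obstruction to associativity is exactly the Riemann-tensor-like symmetric piece, which is the classical source of the Lie-admissible identity. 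Once this calculation is done, the operadic Nakayama-style argument above closes the proof.
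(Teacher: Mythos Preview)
Your deduction of the operad isomorphism $\ker\hat\Phi_{\geo}\cong\LA$ by composing Proposition~\ref{correspondence_ker_phi_0} and Theorem~\ref{th:twisting} is exactly what the paper does, and is correct.

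For the second sentence, however, you are manufacturing an obstacle that does not exist. You propose to construct a map $\LA\to\ker\hat\Phi_{\geo}$ by sending the binary generator to the combinatorial covariant derivative, and you identify as the ``main obstacle'' verifying that this map is well-defined, i.e.\ that the covariant derivative satisfies the Lie-admissible identity modulo the perturbed differential. You then plan an induction on internal vertices to establish surjectivity. Both steps are unnecessary.

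The paper's argument is much shorter and runs in the opposite direction. The isomorphism $\ker\hat\Phi_{\geo}\cong\LA$ coming from Theorem~\ref{th:twisting} is already an \emph{operad} isomorphism, so $\ker\hat\Phi_{\geo}$ is automatically generated by its arity-$2$ component, which is two-dimensional (since $\dim\LA(2)=2$). One then simply computes $\mathrm{d}$ on the three binary generators of $\PL\vee\ComMag$ and observes they all have the same image; hence the covariant derivative element
\[
\begin{tikzpicture}[scale=0.2,baseline=-5]
    \coordinate (root) at (0,-2);
    \coordinate (center) at (0,2);
    \draw[] (root) -- (center);
    \node[var1] (rootnode) at (center) {$ 2 $};
    \node[var1] (rootnode) at (root) { $  1 $};
  \end{tikzpicture} + \frac{1}{2} \begin{tikzpicture}[scale=0.2,baseline=-5]
    \coordinate (root) at (0,-2);
    \coordinate (right) at (2,2);
    \coordinate (left) at (-2,2);
    \draw[kernels2] (root) -- (left);
    \draw[kernels2] (root) -- (right);
    \node[var1] (rootnode) at (left) {$ 1 $};
    \node[var1] (rootnode) at (right) {$ 2 $};
    \node[not] (rootnode) at (root) {};
  \end{tikzpicture}
\]
lies in the kernel, and together with its $\Sigma_2$-twist spans the two-dimensional arity-$2$ part. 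Since $\LA$ is generated in arity $2$, iterated covariant derivatives span all of $\ker\hat\Phi_{\geo}$. There is no need to check Lie-admissibility by hand: that identity holds \emph{because} you already know the operad is $\LA$, not the other way around. Your inductive surjectivity argument and the ``Riemann-tensor-like'' calculation you anticipate are simply not required.
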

It is probably fair to say that Theorem \ref{th:LA-operadic} could have been discovered many times in the past years, but, surprisingly, does not appear in the literature in its precise form. In particular, the coproduct of operads appears in \cite[Prop.~7.4]{MR2503978} where not necessarily torsion-free connections are studied, while the case of a torsion-free connection is discussed in great detail in the sequel \cite{MR2817591} by Jany\v{s}ka and Markl, where however the exposition seems to be guided by wishing to compare the results with the abovementioned classification of Schouten, and so Lie-admissible algebras do not appear. At the same time, the claim that Lie-admissible algebras is precisely the structure one obtains on vector fields in the presence of a torsion-free non necessarily flat connection appears in various places in the literature \cite{MR2032454,munthekaas2023lie,MR4057606}, but no proof of that claim has ever been given. It is however worth mentioning that operadic twisting was recently applied in a similar context by Laubie \cite{laubie2024hypertrees} in his proof of a conjecture of the second author of the present paper identifying the operad of F-manifold algebras inside the operad of the so called ComPreLie algebras as the kernel of an appropriate derivation; moreover, our operad of Christoffel trees has a superficial similarity with the operad of Greg trees defined in previous work of Laubie \cite{laubie2023combinatorics}.

From Theorem \ref{th:LA-operadic}, one is able, via appropriate specialisations of our universal formulae, to obtain descriptions of $\ker\hat\varphi_{\geo}$ for Gaussian subcritical noises in Corollary~\ref{cor:gaussian-dim}  and for non-Gaussian subcritical noises in Corollary~\ref{cor:cumulant-dim}  that will allow us to describe the renormalised equation in Theorem~\ref{thm:main renormalisation_intro} with the chain rule symmetry. We also have an explicit way to compute the dimensions of those spaces using appropriate generating series from the ring of symmetric functions \cite{MR3443860}.

\subsection{Outline of the paper}

Let us outline the paper by summarising the content of its sections. In Section~\ref{sec::2}, we start by introducing in full details the geometric stochastic heat equations. We recall the main theorem for the renormalised equation (see Theorem~\ref{thm:main renormalisation}) that can be obtained from the black box offered by the theory of Regularity Structures \cite{reg,BHZ,CH,BCCH}. We then present the set $\SS_4$ of decorated trees in \eqref{e:SS}
that is used for parametrising the renormalisation when one looks at space-time white noises. These decorated trees have at most four noise type nodes that come in pairs. This is due to the fact that the renormalisation constants are constructed via the expectation of some stochastic iterated integrals and the Gaussianity of the space-time white noise allows us to use Wick formula in \eqref{wick_formula}. We also introduce the elementary differentials $ \Upsilon_{\Gamma,\sigma}$ in \eqref{recursive_Upsilon} which is the second crucial component for describing the renormalised equation.

We explain how the set $ \SS_4 $ evolves by looking at Gaussian noises more singular than space-time white noise but still in the subcritical regime which is a H\"older regularity in space-time greater than $-2$. We define the spaces $ \SS^g_{2n} $ as the same as for $\SS_4$ but now with at most $2n$ nodes. We also consider the case when Gaussianity is removed and replaced by cumulants expansion (see \eqref{cumulant_formula} for the computation of the expectation via cumulants). This leads to the introduction of the sets $ \SS^c_{n} $ that contain decorated trees with at most $n$ noise type nodes with these nodes being partitioned. We finish the section with Theorem~\ref{thm:main renormalisation_general} which is a version of Theorem~\ref{thm:main renormalisation} for more general noises.

In Section~\ref{appendix_A}, we start by recalling the notion of a combinatorial species. We proceed with recalling the definitions of two products of species, the Cauchy product and the composition product. The composition product is used to define (symmetric) operads. We recall the two kinds of generating series one can associate to a linear species, the exponential generating function for dimensions and the generating series for characters of symmetric groups. We also recall the explicit construction of the coproduct of augmented operads, which we use in a meaningful way in the main algebraic result of this paper. Finally, we  give a short overview of the two aspects of homotopical algebra for operads that we use, the Koszul duality theory and operadic twisting.

In Section~\ref{sec::3}, we introduce the main definition for the chain rule symmetry (see Definition~\ref{geo_terms}) wich allows us to extract subspaces of the previous sets that correspond to geometric elements. This will guarantee that the renormalised equation is invariant under changes of coordinates. 
We describe the characterisation of this space in the specific case of $ \SS_4 $ (see Theorem~\ref{thm_covariant_derivatives}) which was obtained in~\cite{BGHZ}. A basis of this vector space is given by covariant derivatives defined in \eqref{eq:covderiv} and \eqref{covariant_derivative_semi_linear}. Then, one is able to refine Theorem~\ref{thm:main renormalisation} by proposing renormalisation constants such that now the solutions are now invariant under diffeomorphisms (see Theorem~\ref{chain:rule}).
 For extending the chain rule result, one has to go through a more abstract formalism. We introduce the operad of Christoffel trees that encompasses the decorated trees used for the renormalisation. We first define in Definition~\ref{Christoffel trees} the species of Christoffel trees, which we denote by $\NT$. Then, we show in Proposition~\ref{operad_christoffel} that the linear span of these trees  can be turned into an operad with the appropiate rule for computing the operad compositions. This operad is shown to be isomorphic to the coproduct of the operads $\PL$ and $\ComMag$ denoted by $\PL \vee \ComMag$, which is crucially used in the sequel.

Our main result is Theorem~\ref{main_theorme_chain_rule} that gives a full characterisation of the chain rule symmetry in the full subcritical regime for Gaussian and non-Gaussian noises. This boils down to explicitly describing the vector spaces 
$	\CS_{\tiny{\geo},2n}^g $ and  $\CS_{\tiny{\geo},n}^c$ expressed in terms of covariant derivatives on decorated trees. In order to prove such a result one has to look at the kernel of a linear map $ \hat \phi_\geo $ given in \eqref{def_phi_geo_1} and \eqref{def_phi_geo_2}. To make a precise connection between the map $\hat \phi_\geo$ and the differential arising in the context of the operadic twisting, we introduce a map $\hat\Phi_{\geo}$, which is the universal version of $\hat \phi_\geo$ on the operad level. In Proposition~\ref{correspondence_ker_phi_0}, we establish that $ \ker \hat \Phi_\geo  $ is isomorphic to the degree zero homology of the operad $\PL\vee\ComMag\vee\kk\alpha$ equipped  with a suitable differential $ \mathrm{d}$. Here $ \alpha $ is an extra constant (arity zero element) that is a Maurer--Cartan element for the Lie algebra structure corresponding to the Lie bracket inside the operad $\PL$. In the main algebraic result of this paper in Theorem~\ref{th:twisting}, we show that the homology of the previous differential graded operad is isomorphic to the operad $\LA$ of Lie-admissible algebras.
Then, one has a full characterisation of $ \CS_{\tiny{\geo},2n}^g $  in Corollary~\ref{cor:gaussian-dim} with a way to compute its dimension in Section~\ref{dimension_counting_sec}. A similar precise characterisation is obtained for $ \CS_{\tiny{\geo},n}^c $  in Corollary~\ref{cor:cumulant-dim}. These results show Theorem~\ref{main_theorme_chain_rule}.

In Section~\ref{sec::4}, we consider a quasi-linear version of the main equation given in \eqref{e:genClass quasi}. We recall the main result of \cite{BGN}, Theorem~\ref{thm:main_quasi_KPZ_4}, that says that the chain rule symmetry allows us to get local counter-terms for the renormalised equation (see also  
 \eqref{S_4_parameter}). We are able to extend this result to the full subcritical regime in Theorem~\ref{thm:main renormalisation_intro_quasi} by using Theorem~\ref{main_theorme_chain_rule}.

\subsection{Open questions arising from our work}
\label{open_problems}
Let us conclude the introduction by outlining several natural questions raised by the results we established.

One interesting question is to incorporate a version of the Itô Isometry in the picture. In \cite{BGHZ}, the role of the Itô Isometry has been well understood for space-time white noise. By a precise dimension counting, one is able to single out one natural solution in a specific geometric context. It is natural to ask whether it is possible to get this isometry in the full subcritical regime for Gaussian noises, and to use it for some uniqueness result.  In the non-Gaussian case, it has no clear meaning.

Another very intriguing question is that of global solutions. Theorems \ref{thm:main renormalisation_intro_quasi} and \ref{thm:main renormalisation_intro} provide only local solution in time. Global solutions have been obtained in~\cite{BGN} by using the chain rule symmetry. The idea is to perform some type of Cole Hopf transform in order to move from the generalised KPZ equation to a stochastic multiplicative heat equation. Then, by checking that some renormalisation constants are orthogonal to the covariant derivative, one can transfer the long time results of this equation to the generalised KPZ equation. Such a strategy could work for the subcritical regime as soon as one has a clear understanding of the long time behaviour of the associated stochastic heat equation. But this result is very specific to dimension one ($d=1$) as it is not clear how to find the generalised Cole Hopf transform in higher dimension. 

One may also wonder if the language of decorated trees is optimal for the given low dimension. Its advantage is that is captures all dimensions at the same time in a universal way, but it has been observed that one gets an overparametrisation in small dimension by using decorated trees, see, e. g. \cite[Rem. 1.9]{BGHZ}. The chain rule symmetry is still valid (see Remark \ref{remark_over_paremetrisation}) but one may want to assure that $ \Upsilon_{\Gamma,\sigma} $ is a bijection. In high dimension, decorated trees and their pre-Lie structures are the best choice. For lower dimension, it is not clear how to fully characterise the kernel of $ \Upsilon_{\Gamma,\sigma}$ except in dimension one where multi-indices \cite{OSSW,MR4537770,LOTT,BL23} and their Novikov structure specified in \cite{BD23} could be used. In numerical analysis multi-indices B-series introduced \cite{BEFH24} are the most natural expansions in dimension one as they characterise uniquely affine equivariant methods, a specific instance  of a general result involving Aromatic series in  \cite{MR3451427}. 

Finally, let us comment on the structure of the vector space $\ker\hat\phi_{\geo}$ in a given low dimension. First of all, it is known that there some exotic operations on vector fields in given low dimensions that are invariant with respect to automorphisms, for example, for $N=d^2+2d-2$, there is a diffeomorphism-invariant ``$N$-commutator'' that computes out of $N$ vector fields on $\mathbb{R}^d$ another vector field, see \cite{MR2081725}, and there are some other examples of similar kind in concrete finite dimensions \cite{MR2383581}. Such examples will inevitably lead to elements in $\ker\hat\phi_{\geo}$ that cannot be obtained as iterations of covariant derivatives; in fact, they will be combinations of trees without thick edges. This means that the elementary differentials associated to those trees will not depend on the Christoffel symbols. However, there are no exotic operations known for $d=1$, leading us to the following

\begin{conjecture}
Consider the universal map ${\Phi}_{\geo}^{\mathrm{Novikov}}$ induced by ${\Phi}_{\geo}$ after taking the operad quotient $\PL\twoheadrightarrow\mathrm{Novikov}$ in the construction given by our approach. Then $\ker\hat{\Phi}_{\geo}^{\mathrm{Novikov}}$ is the linear span of iterations of covariant derivatives. 
\end{conjecture}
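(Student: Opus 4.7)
The plan is to adapt the homological strategy underlying Theorem~\ref{th:twisting} and Theorem~\ref{th:LA-operadic}, replacing $\PL$ by $\mathrm{Novikov}$ throughout. First I would verify that the perturbation differential $d_0$ appearing in \eqref{operad_twisting}, which is built solely out of pre-Lie compositions and commutative-magmatic generators, is compatible with the Novikov relations, so that it descends to a well-defined differential on $\mathrm{Novikov} \vee \ComMag \vee \kk\alpha$. Since $\PL \twoheadrightarrow \mathrm{Novikov}$ is a morphism of operads and $\hat{\Phi}_{\geo}$ is defined universally on $\NT \cong \PL \vee \ComMag$, the induced map $\hat{\Phi}_{\geo}^{\mathrm{Novikov}}$ factors through this quotient, and its kernel is identified with the degree-zero homology of the perturbed dg operad, exactly as in Proposition~\ref{correspondence_ker_phi_0}.

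The core of the proof would be the spectral sequence computation. Using the same filtration as in the proof of Theorem~\ref{th:twisting} (counting $\ComMag$-generators, or equivalently the Maurer--Cartan weight), the $E^0$-differential reduces to $d_{\MC} + \mathrm{ad}_{\ell_1^{\alpha}}$ on the operadic twisting of $\mathrm{Novikov} \vee \ComMag$. The crucial input is Koszulness of $\mathrm{Novikov}$ (due to Dzhumadil'daev--L\"ofwall), which allows one to compute its twisting via the Koszul dual cooperad, in strict parallel with the pre-Lie case. One then expects the spectral sequence to collapse onto a quotient of $\LA$ by the Lie-admissible identities forced by the Novikov relations, and this quotient should be exactly the subspace spanned by iterated covariant derivatives.

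The main obstacle is controlling the twisted bar construction of $\mathrm{Novikov} \vee \ComMag$ relative to $\Lie \hookrightarrow \mathrm{Novikov}$. Unlike $\PL$, whose interaction with operadic twisting is made transparent by the rooted-tree combinatorics, Novikov carries nontrivial quadratic relations, and ensuring that no extra homology appears in positive degrees requires a careful analysis of the Koszul dual complex; in particular, one needs to check that the perturbation $d_0$ remains ``insignificant'' after the Novikov quotient, which is not automatic. Once the degree-zero part is isolated, identifying it with iterated covariant derivatives should follow formally, by the same argument as in the latter part of Theorem~\ref{th:LA-operadic}. As a sanity check, the predicted answer can be matched in low arities against the multi-index formalism of \cite{MR4537770,BD23} in dimension one, providing independent numerical verification of the dimensions of $\ker\hat{\Phi}_{\geo}^{\mathrm{Novikov}}$ before committing to the full spectral sequence argument.
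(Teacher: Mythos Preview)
The statement you are attempting to prove is a \emph{conjecture} in the paper, and the paper explicitly explains why your proposed approach cannot work as written. The entire argument of Theorem~\ref{th:twisting} hinges on the fact that $H_\bullet(\Tw(\PL))\cong\Lie$ is concentrated in homological degree zero; this is what forces the spectral sequence to degenerate at $E^1$ and identifies the homology of the perturbed complex with $\LA$. For the Novikov operad, the paper states plainly (in the paragraph immediately following the conjecture) that the homology of $\Tw(\mathrm{Novikov})$ ``for sure is not concentrated in degree zero''. This kills your spectral sequence argument: the $E^1$ page will have nontrivial contributions in positive homological degrees coming from $H_{>0}(\Tw(\mathrm{Novikov}))\vee\ComMag$, so there will be further differentials, and you cannot read off $\ker\hat\Phi_{\geo}^{\mathrm{Novikov}}$ as the degree-zero part of $E^1$.

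Your proposal correctly identifies the obstacle in passing (``ensuring that no extra homology appears in positive degrees requires a careful analysis''), but treats it as a technical check rather than the fundamental obstruction it is. The extra homology \emph{does} appear, so any proof of the conjecture must either compute $H_\bullet(\Tw(\mathrm{Novikov}))$ explicitly and then control the higher differentials of the spectral sequence (currently open), or abandon the twisting approach entirely and find a different route, as the paper itself suggests.
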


This conjecture would mean that some part of our result is, exceptionally, still valid in dimension $d=1$. However, the homological method of this paper relies on knowing the homology of the operadic twisting of the operad of pre-Lie algebras. For the operad of Novikov algebras, the homology of operadic twisting is not known, and for sure is not concentrated in degree zero, so our strategy will not work \emph{mutatis mutandis}, and it remains to be seen if a similar proof can be furnished.

\subsection*{Acknowledgements}

{\small
	Y. B. gratefully acknowledges funding support from the European Research Council (ERC) through the ERC Starting Grant Low Regularity Dynamics via Decorated Trees (LoRDeT), grant agreement No.\ 101075208. V. D. is funded by the ANR project HighAGT (ANR-20-CE40-0016), and by Institut Universitaire de France. V. D. is grateful to Paul Laubie for several useful discussions.
}

\section{Background from SPDEs}
\label{sec::2}
 
 \subsection{Geometric stochastic heat equations} 
 \label{geometric SPDEs}
 Let smooth functions
$\Gamma^\alpha_{\beta\gamma},\sigma_i^\alpha:\mathbb{R}^d\to\mathbb{R}$ with $\Gamma^\alpha_{\beta\gamma}=\Gamma^\alpha_{\gamma\beta}$. The 
Greek indices run over $\{1,\ldots,d\}$
while Roman indices run over $\{1,\ldots,m\}$, with $m$ being the number of driving noises.
We consider the equation
\begin{equ}[e:genClass]
	\d_t u^\alpha = \d_x^2 u^\alpha + \Gamma^\alpha_{\beta\gamma}(u)\,\d_x u^\beta\d_x u^\gamma
	+ K^\alpha_\beta(u)\,\d_x u^\beta
	+h^\alpha(u) + \sigma_i^\alpha(u)\, \xi_i\;,
\end{equ}
where $u:\mathbb{R}_+\times \mathbb{T} \to\mathbb{R}^d$, and the $ \xi_i$, $i=1,\ldots,m$, are independent space-time noises. We interpret $\Gamma$ as the Christoffel symbols for an arbitrary connection on $\mathbb{R}^d$ and,
for each $i$, the $(\sigma_i^\alpha)_\alpha$ as the components of a vector field on $\mathbb{R}^d$. 

We introduce  a class of mollifiers denoted $ \mathrm{Moll} $ which is the set of all compactly supported smooth functions $ \varrho : \mathbb{R}^2 \rightarrow \mathbb{R}  $ integrating to $1$, such that
$\varrho(t, -x) = \varrho(t, x)$, and such that $\varrho(t, x) = 0$ for $t \leq 0 $ (i.e. is non-anticipative). For $ \varepsilon > 0 $, we replace $ \xi_i $ by its regularisation
$ \xi^{\varepsilon}_i = \varrho_{\varepsilon} * \xi_i $, the space-time convolution of the noise $ \xi_i $ 
with $ \varrho_{\varepsilon} $ given by:
\begin{equs}
	\varrho_{\varepsilon} = \varepsilon^{-3} \varrho(\eps^{-2}t,\eps^{-1}x)
\end{equs}
where we have used the parabolic scaling $(2,1)$ for the rescaling. To describe the renormalised version of Equation \eqref{e:genClass} obtained using the general machinery of Regularity Structures \cite{reg,BHZ,CH,BCCH}, one uses integrals of the type  
\begin{equs} \label{iterated_integrals_four_indices}
I = \sum_{i,j, k, \ell =1}^m I_{ijk\ell} =  \sum_{i,j, k, \ell =1}^m 
  \xi^{\eps}_i K * ( (K * \xi^{\eps}_j) (\partial_x K * \xi^{\eps}_k) (\partial_x K * \xi^{\eps}_{\ell})).
\end{equs}
where we are summing over four different indices. Here,  the kernel $K$ is such that it appears in a fix decomposition $P = K +R$  of the heat kernel $ P $ on the real line.  The kernel $K$ is even in the spatial variable, integrates to zero, and is compactly supported
in a neighbourhood of the origin and the remainder $R$ that is globally smooth. The reason for using such terms as $I$ is coming from renormalisation as correction terms need to be added to the right hand side of \eqref{e:genClass} after regularisation of the noises $ \xi_i $ in order to make sense of the various distributional products. Indeed, one way to renormalise $I$ is to subtract its mean~$\mathbb{E}(I)$. (In practice, one needs a bit more, implementing a version of the BPHZ renormalisation algorithm.) The $ \xi^{\eps}_i $ is a centered ($\mathbb{E}(\xi^{\eps}_i) = 0 $) Gaussian noise such that the $ \xi^{\eps}_i $ are i.i.d, this implies that one has for $ i \neq j $, $ z, \bar{z} \in \mathbb{R}_+ \times \mathbb{T}$ 
\begin{equs}
  \mathbb{E}(\xi^{\eps}_i(z) \xi^{\eps}_j(\bar{z})) = \mathbb{E}(\xi^{\eps}_i(z)) \mathbb{E}( \xi^{\eps}_j(\bar{z}))  =0,  
  \end{equs}
and therefore for every $i,j$
\begin{equs}
    \mathbb{E}(\xi^{\eps}_i(z) \xi^{\eps}_j(\bar{z})) = \delta_{i,j} \mathbb{E}(\xi^{\eps}_i(z) \xi^{\eps}_i(\bar{z})) .
  \end{equs}
where $ \delta_{i,j} $ is the Kronecker delta. First, one can notice that $I$ is polynomial in the Gaussian noises $ \xi_i $ which encourages us to use Gaussian Calculus via the Wick formula given for a product of random Gaussian variables $ \prod_{i \in J} X_{i} $, where $J$ is a finite set, by
\begin{equs} \label{wick_formula}
\mathbb{E}[\prod_{i \in J} X_{i}] = \sum_{\pi \in \mathcal{P}_2(J)} \prod_{(i,j) \in \pi} \mathbb{E}[X_i X_j]
\end{equs}
where $\mathcal{P}_2(J)$ are all the possible pairings of $ J $ and a pairing $ \pi $ is a partition of disjoint pairs of $ J $.
Applying this formula, one gets
\begin{equs} \label{identity_e}
  \mathbb{E}(I_{ijk\ell}) = \delta_{i,j} \delta_{k,\ell}  \tilde{\mathbb{E}}(I_{iikk}) + \delta_{i,k} \delta_{j,\ell} \tilde{\mathbb{E}}(I_{ijij}) +  \delta_{i,\ell} \delta_{j,k}  \tilde{\mathbb{E}}(I_{ijji}).
\end{equs}
where $ \tilde{\mathbb{E}}(I_{iikk}) $ is a short hand notation for saying that we take the expectation $ \mathbb{E}(\xi^{\eps}_i(z_1) \xi^{\eps}_i(z_2)) \mathbb{E}(\xi^{\eps}_k(z_3) \xi^{\eps}_{k}(z_4)) $ inside $ I_{iikk} $ where the $ z_{n} $ are some integration variables. We shall now explain how this leads to using a certain type of decorated trees to organise the correction terms. We consider rooted trees with vertices of $m$ different colours corresponding to the regularised noises $\xi^{\varepsilon}_i$, and edges of two types corresponding to 
\begin{equs} \label{interpretation_renormalisation}
  \<thin> \equiv K * \cdot, \quad   \<thick> \equiv \partial_x K * \cdot, 
\end{equs} 
where $ * $ is the space-time convolution. The edge type $  \<thick>$ in \eqref{interpretation_renormalisation} will be denoted as thick edge in the sequel. These decorated trees are generated by a certain rule which is used to construct the Regularity Structure associated to the equation (see \cite{BHZ} for a precise exposition).
The rule  $ R $ is given by
\begin{equ}
  R(\<thin>) = \{(\<thin>^k,\<generic>_i), (\<thick>^2,\<thin>^k)\,:\, k \ge 0, \, i \in I \}\;
\end{equ}
where $ I $ is a set of generators. Moreover, our trees will correspond to sums like the one in \eqref{iterated_integrals_four_indices}, so that one has, for example,
\begin{equs}
   \<Xi2s> & \equiv \sum_{i =1}^m \xi^{\eps}_i K* \xi^{\eps}_i, \quad  \<I1Xitwos> \equiv \sum_{i=1}^m (\partial_x K * \xi^{\eps}_i)^2, \\ \<Xi4ca1s> &  \equiv \sum_{i,j =1}^m 
   \xi^{\eps}_i K * ( (K * \xi^{\eps}_j) (\partial_x K * \xi^{\eps}_i) (\partial_x K * \xi^{\eps}_j)).
\end{equs}
This convention corresponds to some identifications on the level of trees. For example, in \eqref{identity_e}, the second and the third term are the same if we permute $ \ell $ and $k$. In general, if a tree contains the same number of vertices of two different colours $c_1$ and $c_2$, we declare it to be equal to the tree obtained from it by swapping $c_1$ with $c_2$, for example, 
\begin{equs} \label{identification_trees}
  \<generic> =  \<genericb> \, , \quad  \<Xi2s> = \<Xi2bs>, \, \quad \<I1Xitwos> = \<I1Xitwobs> \, , \quad  \<Xi4ca1s> = \<Xi4ca1bs>.
\end{equs}
Note that since our noises are Gaussian, we must consider only trees with an even number of noises as expectation of an odd monomial is equal to zero. 

Among the trees that we consider, only some are meaningful for us. This is due to some power counting and symmetry considerations. The power counting comes from H\"older regularity considerations as follows. One defines on each decorated tree a degree map denoted by $\text{deg}$. If we assume that the noises are space-time white noises, then their space-time trajectories belong to $  \mathcal{C}^{\alpha}$ where $ \alpha = -\frac{3}{2} - \kappa $ for every~$ \kappa > 0 $. Therefore, one postulates that the degree of each vertex of a tree is equal to~$-\frac{3}{2} - \kappa$. Then, the Schauder estimates given by the space-time convolution with the heat kernel gives a $ + 2 $ gain in H\"older regularity and a space derivative introduces a loss~$ -1 $, so that
$\text{deg}(\<thin>) = 2$, $\text{deg}(\<thick>)= 1$. The degree of a decorated tree is the sum of the degrees of its noises nodes and edges. For example, we have
\begin{equs}
  \text{deg}(  \<Xi2s> ) & = 2 \, \text{deg}(\<generic>) + \text{deg}(\<thin>) = 2 (-\frac{3}{2}- \kappa) + 2 = -1 - 2 \kappa, \\
  \text{deg}( \<Xi4ca1s>  ) & = 2 \,  \text{deg}(\<generic>) +  2 \,  \text{deg}(\<genericb>) + 2 \, \text{deg}(\<thick>)+ 2 \, \text{deg}(\<thin>) =  - 4 \kappa. 
\end{equs}
Only the decorated trees of negative degree whose expectation does not vanish will provide a contribution to the renormalised equation of \eqref{e:genClass}.
Additionally, it turns out that the decorated trees we are interested in may have $0$ or $2$ thick edges attached to each node. Indeed, since the variable $ \partial_x u $ appears in the right hand side of \eqref{e:genClass} under the form $ \d_x u^\beta\d_x u^\gamma $, performing a perturbative expansion, one can get at most two thick edges attached to a node. It is not possible to have just one thick edge because the renormalisation constants are zero due to antisymmetry (odd number of derivatives $\partial_x$), see \cite[Lem. 2.5]{BGHZ}. For the same reason other decorated trees in \cite[Sec. 2.1]{BGHZ} are disregarded. These are the trees with extra node decorations that encode monomials of the form $ X^n $ inside an iterated integral.
 
Now, if we consider trees with at most four nodes, there are exactly $54$ trees fulfilling all the constraints we imposed, namely
\begin{equs}[e:SS]
  {} & \<Xi2s>  \,, \<I1Xitwos> \,,  \<Xi4_1s> \,, \<Xi4c1s>\,, \<Xi4_2s>\,, \<cI1Xi4as> \,,   \<I1Xi4ac1s> \,, \<I1Xi4ac2s> \,, \<cI1Xi4bs> \,, \<I1Xi4bc1s> \,, \<cI1Xi4cs> \,,  \<I1Xi4cc1s> \,, \<I1Xi4cc2s>     \,, \<cI1Xi4abs> \,, \<I1Xi4abc1s> \,,  \<cI1Xi4bcs> \,, \<I1Xi4bcc1s> \,, \<cI1Xi4acs> \,, \<I1Xi4acc1s> \,,   \<I1Xi4acc2s> \,,
  \\ & \<I1Xi4abcc2s> \,, \<I1Xi4abcc1s> \,,   \<2I1Xi4c2s> \,,    \<2I1Xi4c1s> \,,  \<2I1Xi4bc3s> \,,  \<2I1Xi4bc1s> \,,  \<2I1Xi4bc2s> \,,  \<2I1Xi4cc2s> \,,  \<2I1Xi4cc1s> \,, \<Xi4b1s> \,, \<Xi4ba1s> \,, \<Xi4ba2s> \,, \<Xi41s> \,, \<Xi42s> \,,   
  \<Xi4cbc1s>\,,  \<Xi4cbc2s>\,, \\ & \<Xi4ca1s> \,, \<Xi4ca2s> \,,    \<Xi4cabc1s>\,, \<Xi4cabc2s>\,, \<Xi4ec3s> \,,  \<Xi4ec1s> \,,  \<Xi4ec2s> \,, \<Xi4eac2s> \,,    \<Xi4eac1s> \,,  \<Xi4eabisc3s> \,, \<Xi4eabisc1s> \,, \<Xi4eabisc2s> \,, \<Xi4ebc2s> \,,  \<Xi4ebc1s> \,,\<Xi4eabc2s> \,,  \<Xi4eabc1s> \,, \<Xi4eabbisc2s> \,, \<Xi4eabbisc1s>\;.
\end{equs}
This set $\SS_4$ of decorated trees was first considered in \cite[Sec. 2.4]{BGHZ}, where they were called reduced trees. We denote by $\CS_4$ \label{CS page ref2} the vector space with $\SS_4$ as a basis. 

Let us define a map  $\Upsilon_{\Gamma,\sigma}$ that turns each decorated tree into a vector field on $\mathbb{R}^d$. Specifically,  we set \label{Evaluationmap1 page ref}
\begin{equs}\label{recursive_Upsilon}
  \begin{aligned}
    \big(\Upsilon_{\Gamma,\sigma}\tau\big)^{\alpha}(u,q) 
    & = \sum_{\gamma:{\bf N}\to\{1,\ldots,m\}} \sum_{\beta:{\bf E}\to\{1,\ldots,d\}} \prod_{v \in V_{T}}
    \\ & \left[\Big(\prod_{e \in \mathbb{E}^{+}_{\graftI }(v) } \partial_{u^{\beta_e}}\Big)\Big( \prod_{e \in \mathbb{E}^{+}_{\graftID }(v)} \partial_{q^{\beta_e}}\Big)
    \bigl(\bar \Upsilon^{\beta_{e_v},\gamma}_{\Gamma,\sigma}(v)\bigr)(u,q)\right]
  \end{aligned}
\end{equs}
where 
\begin{itemize}
  \item ${\bf E}$ is the set of edges $e\in E_T$ of type $\<thick>$ or $\<thin>$ and ${\bf N}$ is the set of noise type nodes. 
  \item For $v$ with an incoming edge of type $\<generic>$, we set 
  $\bar \Upsilon^{\beta,\gamma}_{\Gamma,\sigma}(v)(u,q) = \sigma_{\gamma(v)}^\beta(u)$, otherwise
  \begin{equ}[e:substGamma]
    \bar \Upsilon^{\beta,\gamma}_{\Gamma,\sigma}(v)(u,q) = \Gamma^\beta_{\gamma \eta}(u)\, q^\gamma q^\eta\;.
  \end{equ}
  \item $\mathbb{E}^{+}_{\graftI }(v)$ and $\mathbb{E}^{+}_{\graftID }(v)$ are the sets of edges with 
  decorations $\<thin>$ and $\<thick>$ respectively coming into $v\in V_T$.
  \item We use the convention $\beta_{e_v} =\alpha$ for $v = \rho_T$, the root of $T$.
\end{itemize}
For instance, one has
\begin{equs}
  \big( \Upsilon_{\Gamma, \sigma}(\<Xi4ca1>)\big)^{\alpha}
  & = \sum_{i,j = 1}^m 2 \partial_{\zeta} \sigma_i^\alpha(u)   \d_\eta \Gamma^\zeta_{\beta\gamma}(u)\,\sigma_j^\eta(u)\,\sigma_i^\gamma(u)\, \sigma_j^\beta(u)\;.
\end{equs}
For a decorated tree $ \tau $, we denote by $ S(\tau) $ the number of tree automorphisms of $\tau$. 
\newpage
One can observe that the decorated trees are used in two different ways:
\begin{itemize}
	\item One way is for computing the renormalised constants via its interpretation as a stochastic iterated integral and the expectation. Therefore, one can give the interpretation \eqref{interpretation_renormalisation} for the edges via space-time convolution with some kernels.
	\item The other way is for getting the expression of some elementary differentials where now the edges are interpreted as some derivatives according to the variables of the equation: $ u $ and $ \partial_x u $ (the later is denoted by $q$ in \eqref{recursive_Upsilon}).
\end{itemize}
These two interpretations allow to compute the following Butcher-type series that is needed for the renormalised equation:
\begin{equs}
	\sum_{\tau \in \SS_4} C_{\eps}(\tau) \frac{ \Upsilon_{\Gamma,\sigma}[\tau](u_{\eps})}{ S(\tau)}.
\end{equs}

Using the notation we introduced, we can finally state a result one obtains, in the case where the $ \xi_i $ are space-time white noises, by applying the general machinery of Regularity Structures \cite{reg,BHZ,CH,BCCH}.
\begin{theorem}\label{thm:main renormalisation}
	Let $u_0^{\alpha}\in\CC^r(\mathbb{T})$ for some $r>0$. For every $ \varrho \in \mathrm{Moll} $, $\eps>0$,	there exist renormalisation constants $ C_{\eps}(\tau) $ such that
	the renormalised equation of \eqref{e:genClass} is given by:
	\begin{equs}[eq:renorm nonlocal1]
		\d_t u^\alpha_{\eps} & = \d_x^2 u^\alpha_{\eps} + \Gamma^\alpha_{\beta\gamma}(u_{\eps})\,\d_x u^\beta_{\eps}\d_x u^\gamma_{\eps}
		+ K^\alpha_\beta(u_{\eps})\,\d_x u^\beta_{\eps}
		+h^\alpha(u_{\eps}) + \sigma_i^\alpha(u_{\eps})\, \xi_i^{\eps}\; \\ & + \sum_{\tau \in \SS_4} C_{\eps}(\tau) \frac{ \Upsilon_{\Gamma,\sigma}[\tau](u_{\eps})}{ S(\tau)}\,.
	\end{equs}
	That is, the solution $u_\eps$ of the random PDEs \eqref{eq:renorm nonlocal1} converges as $\eps\to 0$ in probability, locally in time, to a nontrivial limit $u$. 
\end{theorem}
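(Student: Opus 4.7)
The plan is to apply the standard black-box machinery of Regularity Structures for semilinear parabolic SPDEs \cite{reg,BHZ,CH,BCCH}, in the form that was already carried out for this very equation in \cite{BGHZ}. The argument naturally splits into three steps.

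First, I would build the regularity structure associated to \eqref{e:genClass}. The set of decorated trees is generated by the rule $R$ described above, and one checks (as in \cite{BHZ}) that this rule is normal, complete, and subcritical: with the degree assignment $\text{deg}(\<generic>) = -3/2 - \kappa$, $\text{deg}(\<thin>) = 2$, $\text{deg}(\<thick>) = 1$, for $\kappa$ small enough only finitely many trees compatible with $R$ have negative degree. The algebraic BPHZ scheme of \cite{BHZ} then produces, for every $\eps>0$, a renormalised model $\hat Z_\eps$ parametrised by a character $g_\eps$ of the BPHZ Hopf algebra whose components are the constants $C_\eps(\tau)$, indexed by the reduced trees $\tau$. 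Convergence $\hat Z_\eps \to \hat Z$ in probability as $\eps\to 0$ is supplied by the general stochastic estimates of \cite{CH}, whose hypotheses hold because each $\xi_i^\eps = \varrho_\eps * \xi_i$ is the regularisation of a centred Gaussian stationary space-time white noise.

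Second, to identify the renormalised equation with the explicit form \eqref{eq:renorm nonlocal1}, I would invoke \cite{BCCH}, where the action of the character $g_\eps$ on the abstract solution map is shown to be precisely a Butcher-type counter-term $\sum_\tau C_\eps(\tau)\,\Upsilon_{\Gamma,\sigma}[\tau]/S(\tau)$ with $\Upsilon_{\Gamma,\sigma}$ given by \eqref{recursive_Upsilon}, the sum running over reduced trees of negative degree. Restricting to the explicit set $\SS_4$ from \eqref{e:SS} is justified by three observations: (i) any tree containing an odd number of noise vertices of some single colour has vanishing expectation by Gaussianity via the Wick formula \eqref{wick_formula}, so $C_\eps(\tau)=0$; (ii) any tree with a single thick edge at some node produces a counter-term vanishing by spatial symmetry of $K$, exactly as in \cite[Lem.~2.5]{BGHZ}; (iii) subcriticality together with the rule $R$ caps the total number of noise vertices at four.

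The main potential difficulty lies in the analytic hypotheses of \cite{CH} for iterated integrals involving the spatially-differentiated edges $\<thick>$, but this verification has already been performed in \cite{BGHZ} for this exact rule, so no new probabilistic input is required. The proof then concludes with the standard continuity of the reconstruction and abstract solution maps of regularity structures, which transfers convergence of the model $\hat Z_\eps$ into convergence in probability, locally in time, of $u_\eps$ to the limit $u$; nontriviality of $u$ is guaranteed by the fact that the limiting model $\hat Z$ has nonzero leading singular components.
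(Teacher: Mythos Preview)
Your proposal is correct and matches the paper's treatment: the paper does not give a standalone proof of this theorem but simply states it as the outcome of applying the general Regularity Structures machinery \cite{reg,BHZ,CH,BCCH}, exactly as you outline. If anything, your sketch is more detailed than what the paper provides, since you spell out the roles of subcriticality, the BPHZ character, the stochastic estimates of \cite{CH}, and the reduction to $\SS_4$ via Gaussianity and parity, whereas the paper just points to the black box and to \cite{BGHZ} for the specific case.
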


\subsection{Full subcritical regime and cumulants}

The result stated in the previous section changes if we consider a noise that is different from the space-time white noise. There are two options. One is to consider Gaussian noises that are different from the space-time white noise, the H\"older regularity $ \alpha $ of the noise tells us which set we have to consider for the renormalised equation. If $ \alpha = - 2 $, then decorated trees with negative degree form an infinite set. For every $ \alpha > -2 $, one obtains a finite set, and this range is referred to as the full subcritical regime. The other option is to replace Gaussian noises by non-Gaussian noises described by their cumulants.

If the H\"older regularity of the noise changes, we consider the same trees as before, with the number of vertices ensuring that we obtain a tree of negative degree. As above, only the trees with an even number of vertices of each colour and with at most two vertices of the same colour are needed. Moreover, we make the exact same identification as in \eqref{identification_trees} where the choice of the colour does not matter. We denote the set of decorated trees generated in this way by $ \SS^g $, and its linear span by $\CS^g$. For $ n \in \mathbb{N} $, we denote by  $ \SS^g_{2n} $ the subset of $ \SS^g $ with at most $ 2n  $ noises; we denote by $\CS^g_{2n}$ its linear span. The exponent $g$ is to stress that the noises $ \xi_i^{\eps} $ are Gaussian. Note that for $n=2$, the set $ \SS^g_{2n} $ is the set $ \SS_4 $ discussed above.
 
Let us describe the set-up corresponding to the other option, that is to considering noises described by their cumulants. If one 
considers a collection of random variables   $ \lbrace X_i, \, i \in  S\rbrace $ for some finite index set $S$. 
 For the subsets $J\subseteq S$, we write $X_J=\{X_i : i\in J\}\text{ and }X^J=\prod_{i\in J}X_i$. Further, we write $\CP(J)$ for the set of all partitions of $J$.
 The cumulant $\mathbb{E}_c(X_J)$ is defined inductively over $|J|$ by $\mathbb{E}_c(X_J)=\mathbb{E}(X_i)$, if $J$ is the singleton containing $i$ and
	\begin{equs} \label{cumulant_formula}
	\mathbb{E}(X^J)=\sum_{\pi\in\CP(J)}\prod_{\bar{J}\in\pi}\mathbb{E}_c(X_{\bar{J}}),\qquad\text{if }|J|\ge 2.
	\end{equs}
Now, if we assume that the $ \xi_i $ are i.i.d, centred, described by their cumulants, one has
\begin{equs}
 \label{identity_e_cumulants}
 \begin{aligned}
	\mathbb{E}(I_{ijk\ell}) &= \delta_{i,j} \delta_{k,\ell}	\tilde{\mathbb{E}}_c(I_{iikk}) + \delta_{i,k} \delta_{j,\ell}	\tilde{\mathbb{E}}_c(I_{ijij}) +	\delta_{i,\ell} \delta_{j,k}	\tilde{\mathbb{E}}_c(I_{ijji}) 
\\ &	+ \delta_{i,j,k,\ell} 	\tilde{\mathbb{E}}_c(I_{iiii})
\end{aligned}
\end{equs}
where $\tilde{\mathbb{E}}_c$ has the same interpretation as for $ \tilde{\mathbb{E}} $ but with the expectations $ \mathbb{E} $ replaced by the cumulants $ \mathbb{E}_c $.
One has to add another decorated tree:
\begin{equs}
\<Xi4cas> \equiv \	\delta_{i,j,k,\ell} 	\tilde{\mathbb{E}}_c(I_{iiii}).
\end{equs}
whose elementary differential is given by
\begin{equs}
		\big( \Upsilon_{\Gamma, \sigma}\<Xi4ca>)\big)^{\alpha}
		 = \sum_{i= 1}^m 2 \partial_{\zeta} \sigma_i^\alpha(u)   \d_\eta \Gamma^\zeta_{\beta\gamma}(u)\,\sigma_i^\eta(u)\,\sigma_i^\gamma(u)\, \sigma_i^\beta(u)\;
\end{equs}
 which has to be compared with \eqref{e:substGamma}. 
When one considers cumulants, one has to look at partitions of the noise type nodes. As before, we make an identification where we allow swapping colours of parts of partition with the same number of elements. We denote the decorated trees generated in this way by $ \SS^c $, and its linear span by $\CS^c$. Moreover, for every $ n \in \mathbb{N} $, $ n \geq 1 $, we denote by $ \SS^c_{n} $ the subset of $ \SS^c $ with at most $ n  $ noises, and we denote its linear span by $\CS^c_n$. The exponent $  c$ is to stressed that the noises $ \xi_i^{\eps} $ are described by cumulants. 

Let us stress that Theorem~\ref{thm:main renormalisation} is still true when the regularity of noises $\xi_i$ changed satisfying the assumptions in \cite{CH}. This implies the replacement of $ \SS_4 $ by one of the sets mentioned above that we will denote by $ \SS_{\xi}$. 

\begin{theorem}\label{thm:main renormalisation_general}
	Let $u_0^{\alpha}\in\CC^r(\mathbb{T})$ for some $r>0$. For every $ \varrho \in \mathrm{Moll} $, $\eps>0$,	there exist renormalisation constants $ C_{\eps}(\tau) $ such that
	the renormalised equation of \eqref{e:genClass} is given by:
	\begin{equs}[eq:renorm nonlocal2]
		\d_t u^\alpha_{\eps} & = \d_x^2 u^\alpha_{\eps} + \Gamma^\alpha_{\beta\gamma}(u_{\eps})\,\d_x u^\beta_{\eps}\d_x u^\gamma_{\eps}
		+ K^\alpha_\beta(u_{\eps})\,\d_x u^\beta_{\eps}
		+h^\alpha(u_{\eps}) + \sigma_i^\alpha(u_{\eps})\, \xi_i^{\eps}\;, \\ & + \sum_{\tau \in \SS_{\xi}} C_{\eps}(\tau) \frac{ \Upsilon_{\Gamma,\sigma}[\tau](u_{\eps})}{ S(\tau)}\,.
	\end{equs}
	That is, the solution $u_\eps$ of the random PDEs \eqref{eq:renorm nonlocal2} converges as $\eps\to 0$ in probability, locally in time, to a nontrivial limit $u$. 
\end{theorem}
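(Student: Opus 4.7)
The plan is to obtain Theorem~\ref{thm:main renormalisation_general} by invoking the general machinery of Regularity Structures \cite{reg,BHZ,CH,BCCH} essentially in the same fashion as Theorem~\ref{thm:main renormalisation} was obtained, the only modifications being those needed to accommodate the broader noise class. Concretely, since the proof of Theorem~\ref{thm:main renormalisation} already runs through this black box, the task reduces to checking that each step of the black box remains applicable when space--time white noise is replaced by either a Gaussian noise of arbitrary H\"older regularity $\alpha>-2$ or by a non-Gaussian noise described by cumulants satisfying the assumptions of~\cite{CH}.

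First, I would set up the regularity structure attached to \eqref{e:genClass} from the rule $R$ of Section~\ref{geometric SPDEs}. For any noise with $\alpha>-2$, the rule is normalising, complete and subcritical, so by \cite{BHZ} it yields a reduced regularity structure together with its renormalisation group. The set of trees with strictly negative degree is finite by subcriticality; imposing the structural restrictions recalled in Section~\ref{geometric SPDEs} (at most two thick edges at each vertex, trees with non-vanishing expectation under the relevant probabilistic law) singles out precisely the set $\SS_\xi$: for Gaussian noises the Wick formula \eqref{wick_formula} forces an even number of vertices of each colour and produces the family $\SS^g_{2n}$, whereas for cumulant-described noises the cumulant formula \eqref{cumulant_formula} leads to the partition decorations and hence to $\SS^c_n$; in both cases the identification of trees obtained by permuting colours of equinumerous groups reflects the i.i.d.\ hypothesis.

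Second, I would apply the convergence theorem of \cite{CH} to the BPHZ-renormalised model $\hat Z^{\eps}$ built from the regularised noises $\xi^{\eps}_i=\varrho_\eps*\xi_i$. The assumptions of \cite{CH} are designed to cover exactly the two classes of noises under consideration, provided appropriate moment and (joint) cumulant bounds hold; this yields convergence in probability of $\hat Z^{\eps}$ to a limiting admissible model $\hat Z$ as $\eps\to 0$. The BPHZ constants $C_\eps(\tau)$ read off from this construction, one per tree $\tau\in\SS_\xi$, are the renormalisation constants appearing in \eqref{eq:renorm nonlocal2}.

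Third, I would invoke the BCCH identification of the renormalised equation \cite{BCCH}: the abstract fixed point formulation associated with $\hat Z^{\eps}$ is shown to coincide, after projection, with the classical PDE obtained by adding to \eqref{e:genClass} the counter-term
\[
\sum_{\tau\in\SS_\xi} C_\eps(\tau)\,\frac{\Upsilon_{\Gamma,\sigma}[\tau](u_\eps)}{S(\tau)},
\]
with the elementary differentials $\Upsilon_{\Gamma,\sigma}[\tau]$ defined recursively as in \eqref{recursive_Upsilon}. Local-in-time existence for \eqref{eq:renorm nonlocal2} and convergence in probability of $u_\eps$ to a nontrivial limit $u$ then follow from the continuity of the abstract solution map on the space of models, applied to the converging sequence $\hat Z^{\eps}\to\hat Z$. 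The main technical point—mostly absorbed into the cited black box—is to verify that the combinatorial conventions (symmetry factors $S(\tau)$, colour identifications, partition decorations in the cumulant case) are compatible with the signs and multiplicities produced by BPHZ and by the BCCH coherence formula; this is a routine bookkeeping exercise but the only place where the Gaussian and cumulant cases require genuinely distinct verifications.
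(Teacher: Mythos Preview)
Your proposal is correct and matches the paper's approach: the paper does not give a standalone proof of Theorem~\ref{thm:main renormalisation_general} but simply states it as what one obtains by running the Regularity Structures black box \cite{reg,BHZ,CH,BCCH} under the noise assumptions of \cite{CH}, with $\SS_4$ replaced by the appropriate set $\SS_\xi$. If anything, you have spelled out the steps (subcriticality of the rule, BPHZ model convergence via \cite{CH}, and the BCCH identification of the counter-terms) in more detail than the paper itself does.
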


\section{Background from operad theory}\label{appendix_A}

Let us give a short summary of operadic notions and results that we use in this paper. We refer the reader to~\cite{MR1629341} for more details on species and operations on them, to~\cite{MR3443860} for information about symmetric functions, to~\cite{MR2954392} for more details on operads and Koszul duality, and to~\cite{MR4621635} for more details on operadic twisting.

\subsection{Combinatorial species}

The notion of a \emph{combinatorial species} was introduced by Joyal \cite{MR0633783} in order to produce a categorification of formal power series and various operations on them. A combinatorial species of structures (or simply a species) is a rule $\calF$ that defines, for each  finite set $S$, a finite set $\calF(S)$ called the set of $\calF$-structures on $S$. The only condition that should be satisfied is that this rule should be ``canonical'', that is, compatible with bijections between finite sets: if $\imath\colon S_1\to S_2$ is a bijection, we should have a bijection $\calF(\imath)\colon\calF(S_1)\to\calF(S_2)$, and these bijections must be coherent, that is $\calF(\imath_1\circ\imath_2)=\calF(\imath_1)\circ\calF(\imath_2)$. A category theorist would say that a species is an endofunctor of the groupoid (category where only bijections are valid morphisms) of finite sets. 

As an example, one can consider the species $\mathrm{RT}$ of rooted tree structures on $S$, for which $\mathrm{RT}(S)$ is the set of all possible rooted trees on the vertex set $S$. There is also a very trivial example of the \emph{species of sets} $E$ given by $E(S)=\{S\}$ (which basically means that there is just one structure of a set on any set), and an even more trivial example of a \emph{singleton species} $E_1$ for which $E_1(S)=\varnothing$ unless $|S|=1$, in which case $E_1(S)=\{S\}$. The species $E_k$ of $k$-element sets is defined similarly.

Each species $\calF$ has its (exponential) generating function
 \[
f_{\calF}(t)=\sum_{n\ge 0}\frac{|\calF(\{1,\ldots,n\})|}{n!}t^n.
 \]
Two most natural operations one has on power series are product and substitution. They correspond to two very natural operations on the level of species. 

The \emph{Cauchy product} of two species $\calF_1$ and $\calF_2$ is defined by the formula
 \[
(\calF_1\cdot\calF_2)(S):=\bigsqcup_{S=S_1\sqcup S_2}\calF_1(S_1)\times\calF_2(S_2).
 \]
In plain words, a $(\calF_1\cdot\calF_2)$-structure on a set $S$ is obtained by partitioning $S$ into two disjoint parts $S_1$ and $S_2$ and then defining separately an $\calF_1$-structure on $S_1$ and an $\calF_2$-structure on $S_2$.

Algebraically, substitution $f_1(f_2(t))$ of one power series into another makes sense when the series $f_2(t)$ has no constant term (for otherwise infinite sums appear). For that reason, the \emph{composition product} of two species $\calF_1$ and $\calF_2$ is defined if $\calF_2(\varnothing)=\varnothing$, and is given by the formula
 \[
(\calF_1\circ\calF_2)(S)
=\bigsqcup_{S=S_1\sqcup \cdots\sqcup S_n}\bigl(\calF_1(\{S_1,\ldots,S_n\})\times\calF_2(S_1)\times\cdots\times \calS_2(S_n)\bigr)/\Sigma_n.
 \]
In plain words, a $(\calF_1\circ\calF_2)$-structure on a set $S$ is obtained by forming a set partition of $S$ into several disjoint nonempty parts $S_1$,\ldots, $S_n$, and then defining separately a $\calF_1$-structure on the set of parts of that partition, and an $\calF_2$-structure on each of the parts. 

Let us illustrate these notions on the examples of species we mentioned: We shall prove the equality
\begin{equation}
\mathrm{RT}=E_1\cdot (E\circ\mathrm{RT}).
\end{equation}
Due to the definition of the Cauchy product, the right hand side of this equation, defines a species of structures on $S$ for which one has to partition $S$ into two parts, and impose an $E_1$-structure on the first part and an $E(\mathrm{RT})$-structure on the second part. An $E_1$-structure on the first part exists only if the first part consists of a single element. Defining an $E\circ\mathrm{RT}$-structure on the complement of that element amounts to partitioning that complement into several non-empty parts and defining a rooted tree structure on each part. It is now clear that the result $E_1\cdot (E\circ\mathrm{RT})$ is precisely the species $\mathrm{RT}$, since one can construct a rooted tree structure on a set $S$ by first choosing the root label, and then partitioning the remaining vertices into several parts, choosing rooted tree structures on those parts, and grafting those trees at the chosen root. 

\subsection{Linear species}

Examining the definitions above, one notices that one can give these definitions in more generality, saying that $\calF$ assumes values in a symmetric monoidal category that has all coproducts and coequalisers; one just needs to replace $\times$ by the monoidal structure, and the quotient by $\Sigma_n$ by the corresponding coequaliser. Two examples that will be used in this paper are the symmetric monoidal category of $\kk$-vector spaces and the symmetric monoidal category of $\kk$-chain complexes; we shall call the corresponding versions of species \emph{linear species} and \emph{differential graded species} (or \emph{dg species}). For differential graded species, the following K\"unneth formula holds:
\begin{equation}
H_\bullet(\calF_1\circ\calF_2)\cong H_\bullet(\calF_1) \circ H_\bullet(\calF_2).
\end{equation}

It is important to note that the composition product $\circ$ makes the category of species (or its generalisations) into a (very non-symmetric) monoidal category, with the monoidal unit being the singleton species $E_1$, which in this context will be denoted by $\mathbbold{1}$. A \emph{(symmetric) operad} is a monoid in that monoidal category, that is a triple $(\calO,\gamma,\eta)$, where $\gamma\colon\calO\circ\calO\to\calO$ is the product and $\eta\colon \mathbbold{1}\to\calO$ is the unit, which satisfy the usual axioms of a monoid in a monoidal category \cite{MR0354798}. This notion was introduced independently by Artamonov \cite{MR0237408} and May \cite{MR0420610}. It will be useful for us that one can alternatively give axioms of an operad in terms of so called ``partial compositions'' 
 \[
\circ_\star\colon\calO(S_1\sqcup\{\star\})\otimes\calO(S_2)\to\calO(S_1\sqcup S_2),
 \]
which correspond to substitutions according to the composition product where most of arguments used are equal to the operad unit (and $\star$ indicates the argument where a nontrivial substitution is made). In order for these operations to define an operad structure, it is necessary and sufficient that they satisfy the so called sequential and parallel axioms \cite{MR2954392}, which are usually easier to check than the full associativity of composition.

Most of operads that one studies, are studied because they ``control'' important algebras. To a type of algebras $\calA$ (such as associative algebras, Lie algebras etc.) one can associate an operad as follows. Consider the free algebra of type $\calA$ generated by a finite set $S$, and consider the vector space $\calA(S)$ spanned by elements where each generator from $S$ is used once. These subspaces form a linear species, and, moreover, they form an operad, since we can perform substitutions of elements of that kind into one another, giving our species a lot of extra structure. 

One important example of an operad that, together with its variations, plays an important role in algebraic aspects of SPDEs, is the operad $\PL$. The underlying linear species of this operad is the linearisation of the species $\mathrm{RT}$, and the operad structure is defined in a simple combinatorial way \cite{MR1827084}: if $T_1\in\mathrm{RT}(V_1)$, and $T_2\in\mathrm{RT}(V_2)$, then for $v\in V_1$, we define
 \[
T_1\circ_v T_2=\sum_{f\colon \mathbb{E}^{+}_{\graftI }(v)\to V_2} T_1\circ_v^f T_2 .
 \] 
Here $\mathbb{E}^{+}_{\graftI }(v)$ is the set of incoming edges of the vertex $v$ of $T_1$; the tree $T_1\circ_v^f T_2$ is obtained by replacing the vertex $v$ of the tree $T_1$ by the tree $T_2$, and grafting each incoming edge $e$ of $v$ at the vertex $f(e)$ of $T_2$. This operad controls  \emph{pre-Lie algebras}, that is algebras with one binary operation $\triangleleft$ satisfying the identity 
 \[
(a_1\triangleleft a_2)\triangleleft a_3-a_1\triangleleft(a_2\triangleleft a_3)=(a_1\triangleleft a_3)\triangleleft a_2-a_1\triangleleft(a_3\triangleleft a_2),
 \]
 see \cite{MR1827084} for details.

The category of all operads admits coproducts, which seems to be first spelled out in \cite{MR2034546}. The coproduct $\calO_1\vee\calO_2$ is simply the free product of monoids (one takes the monoid generated by $\calO_1$ and $\calO_2$, and imposes the relations saying that the two units are equal, and that all compositions that ``can be computed'', that is, compositions of several elements of $\calO_1$ or of several elements of $\calO_2$, are computed in the way they would be computed in $\calO_1$ or in $\calO_2$, respectively). All operads we consider are augmented, that is, equipped with a map to the trivial operad $\epsilon\colon\calO\to \mathbbold{1}$. For such operads, it is very easy to describe the underlying linear species of the coproduct $\calO_1\vee\calO_2$ in terms of the underlying species of $\calO_1$ and $\calO_2$. Specifically, we have the equalities of linear species
\begin{gather}
\calO_1\vee\calO_2=\mathbbold{1}\oplus(\calO_1\vee\calO_2)_1\oplus(\calO_1\vee\calO_2)_2,\\
(\calO_1\vee\calO_2)_1=\overline{\calO_1}\circ(\calO_1\vee\calO_2)_2,\\
(\calO_1\vee\calO_2)_2=\overline{\calO_2}\circ(\calO_1\vee\calO_2)_1,
\end{gather}
where $\overline{\calO}$ denotes the kernel of the augmentation, and $(\calO_1\vee\calO_2)_1$ and $(\calO_1\vee\calO_2)_2$ denote the parts of the coproduct obtained as operadic substitutions into elements of the augmentation ideals of $\calO_1$ and $\calO_2$ of some other elements of the coproduct. It follows immediately from this description and from the K\"unneth formula that for two augmented differential graded operads $\calO_1$ and $\calO_2$, we have an operad isomorphism
\begin{equation}
H_\bullet(\calO_1\vee\calO_2)\cong H_\bullet(\calO_1)\vee H_\bullet(\calO_2).
\end{equation}

One important example of a coproduct that will be of interest for us will be the operad of Lie admissible algebras, which we shall denote $\LA$. The notion of a Lie-admissible algebra goes back to Albert \cite{MR0027750}; classically, a Lie-admissible algebra is an algebra with one binary operation satisfying the identity 
 \[
\sum_{\sigma\in\Sigma_3} \mathrm{sign}(\sigma) ((a_{\sigma(1)}a_{\sigma(2)})a_{\sigma(3)}-a_{\sigma(1)}(a_{\sigma(2)}a_{\sigma(3)}))=0.
 \]
This identity is in fact equivalent to the Jacobi identity for the bracket $[a,b]=ab-ba$, which means that, considering the ``polarised'' operations $[a,b]$ and $a\bullet b=ab+ba$, we can give an equivalent definition of a Lie-admissible algebra: it is an algebra with an anticommutative bracket satisfying the Jacobi identity and a commutative product not satisfying any identities. This observation, going back to \cite{MR2225770}, literally means that the operad of Lie-admissible algebras is the coproduct of the Lie operad and the operad $\ComMag$ of commutative magmatic algebras, that is algebras with one commutative binary operation that does not satisfy any identities. 

\subsection{Dimensions and symmetric group characters}

For linear species, we also have exponential generating series
 \[
f_{\calF}(t)=\sum_{n\ge 0}\frac{\dim\calF(\{1,\ldots,n\})}{n!}t^n,
 \]
and they satisfy 
\begin{equation}
f_{\calF_1\cdot \calF_2}(t)=f_{\calF_1}(t)f_{\calF_2}(t),\quad 
f_{\calF_1\circ \calF_2}(t)=f_{\calF_1}(f_{\calF_2}(t)).
\end{equation}
Additionally, one can introduce a more refined invariant, a generating function of symmetric group characters. This is a formal power series in infinitely many variables $p_i$, $i\ge 1$, defined as
\begin{multline*}
F_{\calF}= \sum_{n\ge 0}\sum_{k_1+2k_2+\cdots+nk_n=n}\chi_{\calF,n}(1^{k_1}2^{k_2}\cdots n^{k_n})\frac{p_1^{k_1}p_2^{k_2}\cdots p_n^{k_n}}{1^{k_1}k_1!2^{k_2}k_2!\cdots n^{k_n}k_n!},
\end{multline*}
where $\chi_{\calF,n}$ is the character of the symmetric group $\Sigma_n$ acting on $\calF(\{1,2,\ldots,n\})$ via the maps $\calF(\sigma)$, and where $1^{k_1}2^{k_2}\cdots n^{k_n}$ denotes the conjugacy class in $\Sigma_n$ with $k_1$ cycles of length $1$, $k_2$ cycles of length $2$, \ldots, $k_n$ cycles of length $n$. 
We shall think of $F_{\calF}$ as an element of the \emph{ring of symmetric functions} $\Lambda$; roughly speaking, each variable $p_r$ should be thought of as a power sum $\sum_i t_i^r$ in a sufficiently large number of variables. We shall make use of that intuition in some character calculations, since it will be beneficial to also use elementary symmetric functions $e_r$ (sums of $r$-fold products of several different variables $t_i$) and the symmetric functions $m_{\lambda}$ (sums of elements in the $\Sigma_n$-orbit of the monomial $t_1^{\lambda_1}\cdots t_s^{\lambda_s}$). 

For our purposes, it is important to know that these generating functions are also compatible with operations on linear species:
\begin{equation}
F_{\calF_1\cdot \calF_2}=F_{\calF_1}F_{\calF_2},\quad
F_{\calF_1\circ \calF_2}=F_{\calF_1}|_{p_i:=F_{\calF_2}(p_{ij}\colon j\ge 1)}. 
\end{equation}

\subsection{Koszul duality}

An important tool from homotopical algebra one can use for operads is Koszul duality. We refer the reader to \cite{MR2954392} for details, and give just the executive summary here. When we discuss constructions of homotopical algebra, we often use the formal symbol $s$ of homological degree $1$ to handle suspensions: for a homologically graded vector space $V$, we denote by $sV$ the homologically graded vector space with shifted homological degrees, $(sV)_i=V_{i-1}$. 

Let $\calO$ be an augmented operad. We can then define its \emph{bar construction}: it is made of rooted trees whose vertices are decorated by $s\overline{\calO}$ (to be completely precise, one should say ``the cofree cooperad on $s\overline{\calO}$''), and it has a differential $d$ which forms the sum, with appropriate signs, of all ways to contract one edge in such a tree (and label the newly obtained vertex by the partial composition of the labels of two disappeared vertices). The axioms of an operad ensure that $d^2=0$, so one can compute the homology of this differential.   

Suppose that our operad $\calO$ is ``homogeneous'', that is, it is generated by corollas of weight one, and relations between them are combinations of decorated trees of the same weight. Then the bar construction is bi-graded: it has the weight grading, and the homological grading. We say that our operad is a \emph{Koszul} operad if the homology of the bar construction is concentrated on the diagonal (where the weight is equal to the homological degree). In this case, if we denote by $\calH$ the collection of homologies of components of the bar construction, we have the following important properties:  
\begin{gather}
f_{\calA}(-f_{\calH}(-t))=t, \\
F_{\calA}(-F_{\calH}(-p_{ij}\colon j\ge 1)\colon i\ge 1)=p_1. 
\end{gather}

\subsection{Operadic twisting}

The definition of \emph{operadic twisting} goes back to the work of Willwacher \cite[Appendix~I]{MR3348138} who introduced it to work with Kontsevich's graph complexes. The original version of operadic twisting is a certain endofunctor of the category of differential graded operads equipped with a morphism from the operad of shifted $L_\infty$-algebras. There exists a counterpart of that endofunctor for operads equipped with a morphism from the operad~$L_\infty$, see \cite[Sec.~3.5]{MR3299688}. In particular, that version can be applied to an operad $\calP$ concentrated in degree zero equipped with a map of operads 
$f\colon \Lie\to\calP$ that sends the generator of $\Lie$ to a certain binary operation of $\calP$ that we denote $[-,-]$. In this case, the operad $\Tw(\calP)$, the result of applying the twisting procedure to the operad $\calP$, is a differential graded operad that can be defined as follows~\cite{MR4621635}. Denote by $\alpha$ a new operation of arity~$0$ and homological degree~$-1$. The underlying non-differential operad of $\Tw(\calP)$ is the coproduct $\calP\vee\kk\alpha$; algebras over that operad are $\calP$-algebras with a specified element of homological degree~$-1$. To define the differential, one performs two steps. First, one considers the operad
 $
\MC(\calP)=\left(\calP\vee\kk\alpha, d_{\MC} \right)
 $
encoding dg $\calP$-algebras with a Maurer--Cartan element; by definition, this means that the differential $d_{\MC}$ vanishes on $\calP$ and satisfies $d_{\MC}(\alpha)=-\frac12[\alpha,\alpha]$. The element $\ell_1^{\alpha}\in \MC(\calP)(1)$ defined by the formula
$
\ell_1^{\alpha}(a_1)=[\alpha,a_1] 
$
can be use to twist the differential of that operad, and one puts 
 \[
\Tw(\calP)=\left(\calP\vee\kk\alpha, d_{\Tw}=d_{\MC}+\mathrm{ad}_{\ell_1^{\alpha}}\right), 
 \]
where $\mathrm{ad}_{\ell_1^{\alpha}}(\mu)=\ell_1^{\alpha}\circ_1\mu-(-1)^{|\mu|}\sum_i\mu\circ_i\ell_1^{\alpha}$ 
The operad $\PL$ is equipped with a map from the Lie operad that sends the Lie bracket to 
\begin{equs}
   \begin{tikzpicture}[scale=0.2,baseline=-5]
    \coordinate (root) at (0,-2);
    \coordinate (center) at (0,2);
    \draw[] (root) -- (center);
    \node[var1] (rootnode) at (center) {$ 2 $};
    \node[var1] (rootnode) at (root) { $  1 $};
  \end{tikzpicture} -
  \begin{tikzpicture}[scale=0.2,baseline=-5]
    \coordinate (root) at (0,-2);
    \coordinate (center) at (0,2);
    \draw[] (root) -- (center);
    \node[var1] (rootnode) at (center) {$ 1 $};
    \node[var1] (rootnode) at (root) { $  2 $};
  \end{tikzpicture}, 
\end{equs} 
which corresponds to the fact that the commutator in every pre-Lie algebra satisfies the Jacobi identity. If one applies to the operad $\PL$ the operadic twisting procedure, the result is a differential graded operad that admits the following explicit description \cite{dotsenko2021homotopical}. The arity $n$ component $\Tw(\PL)(n)$ is spanned by rooted trees with ``normal'' vertices labelled $1$, $\ldots$, $n$, and a certain number of ``special'' vertices labelled~$\alpha$. The differential in $\Tw(\PL)$, applied to such tree~$T$, is made of the following terms:

-- the sum over all possible ways to split a normal vertex labelled $i$ into a normal vertex with the same label $i$ and a special vertex, and to connect the incoming edges of that vertex to one of the two vertices thus obtained, so that the term where the vertex further from the root retains the label is taken with the plus sign, and the other term is taken with the minus sign (corresponding to the operadic insertions of $\ell_1^{\alpha}$ at labelled vertices):
\begin{equs}
 \begin{tikzpicture}[scale=0.2,baseline=-5]
  \coordinate (root) at (0,0);
  \coordinate (right) at (2,2);
  \coordinate (left) at (-2,2);
  \coordinate (center) at (0,2);
  \draw[] (root) -- (left);
  \draw[] (right) -- (root);
  \node[var1] (rootnode) at (root) {$ i $};
  \node[] (rootnode) at (center) {$ \cdots $};
\end{tikzpicture} \rightarrow  \sum \begin{tikzpicture}[scale=0.2,baseline=-5]
\coordinate (root) at (0,0);
\coordinate (right) at (2,2);
\coordinate (left) at (-3,3);
\coordinate (lright) at (-5,5);
\coordinate (lleft) at (-1,5);
\coordinate (center) at (0,2);
\coordinate (lcenter) at (-3,5);
\draw[] (lleft) -- (left);
\draw[] (lright) -- (left);
\draw[] (root) -- (left);
\draw[] (right) -- (root);
\node[var1] (rootnode) at (root) {$ \alpha $};
\node[var1] (rootnode) at (left) {$ i $};
\node[] (rootnode) at (center) {$ \cdots $};
\node[] (rootnode) at (lcenter) {$ \cdots $};
\end{tikzpicture}\,  -  \begin{tikzpicture}[scale=0.2,baseline=-5]
\coordinate (root) at (0,0);
\coordinate (right) at (2,2);
\coordinate (left) at (-3,3);
\coordinate (lright) at (-5,5);
\coordinate (lleft) at (-1,5);
\coordinate (center) at (0,2);
\coordinate (lcenter) at (-3,5);
\draw[] (lleft) -- (left);
\draw[] (lright) -- (left);
\draw[] (root) -- (left);
\draw[] (right) -- (root);
\node[var1] (rootnode) at (root) {$ i $};
\node[var1] (rootnode) at (left) {$ \alpha $};
\node[] (rootnode) at (center) {$ \cdots $};
\node[] (rootnode) at (lcenter) {$ \cdots $};
\end{tikzpicture}
\end{equs}
-- the sum over all special vertices of $T$ of all possible ways to split that vertex into two vertices of the same kind, and to connect the incoming edges of that vertex to one of the two vertices thus obtained, taken with the plus sign (corresponding to computing the differential of the Maurer--Cartan element, since we have $d_{\Tw}(\alpha)=d(\alpha)+\ell_1^\alpha(\alpha)=-\frac12[\alpha,\alpha]+[\alpha,\alpha]=\frac12[\alpha,\alpha]=\alpha\triangleleft\alpha$): 
\begin{equs}
  \begin{tikzpicture}[scale=0.2,baseline=-5]
    \coordinate (root) at (0,0);
    \coordinate (right) at (2,2);
    \coordinate (left) at (-2,2);
    \coordinate (center) at (0,2);
    \draw[] (root) -- (left);
    \draw[] (right) -- (root);
    \node[var1] (rootnode) at (root) {$ \alpha $};
    \node[] (rootnode) at (center) {$ \cdots $};
  \end{tikzpicture} \rightarrow  \sum \begin{tikzpicture}[scale=0.2,baseline=-5]
    \coordinate (root) at (0,0);
    \coordinate (right) at (2,2);
    \coordinate (left) at (-3,3);
    \coordinate (lright) at (-5,5);
    \coordinate (lleft) at (-1,5);
    \coordinate (center) at (0,2);
    \coordinate (lcenter) at (-3,5);
    \draw[] (lleft) -- (left);
    \draw[] (lright) -- (left);
    \draw[] (root) -- (left);
    \draw[] (right) -- (root);
    \node[var1] (rootnode) at (root) {$ \alpha $};
    \node[var1] (rootnode) at (left) {$ \alpha $};
    \node[] (rootnode) at (center) {$ \cdots $};
    \node[] (rootnode) at (lcenter) {$ \cdots $};
  \end{tikzpicture}
\end{equs}
-- grafting the tree $T$ at the new root which is a special vertex, taken with the minus sign, and the sum of all possible ways to create one extra leaf which is a special vertex, taken with the plus sign (corresponding to operadic insertions of the tree $T$ at the only vertex of $\ell_1^{\alpha}$).

\section{Chain rule symmetry}

\label{sec::3}

\subsection{Results for space-time white noises}

Theorem~\ref{thm:main renormalisation} tells us that we have an infinite number of solutions parametrised by a finite dimensional space of dimension $54$ which is $ \CS_4 $. The aim of the work \cite{BGHZ} was to consider only one natural solution by imposing natural symmetries. One of them is the chain rule which corresponds to invariance under the change of coordinates.
We first recall how a diffeomorphism acts on the various coefficients of \eqref{e:genClass}.
Given a diffeomorphism $\phi$ of $\mathbb{R}^d$, we then act on connections $\Gamma$, vector fields $\sigma$ 
and $(1,1)$-tensors $K$ in the usual way by imposing that
\begin{equs} \label{eq:actioncristo}
	(\phi \act \Gamma)^\alpha_{\eta\zeta}(\phi(u)) \, \d_\beta \phi^\eta(u) \, \d_\gamma \phi^\zeta(u) &= 
	\d_\mu \phi^\alpha(u) \, \Gamma^{\mu}_{\beta\gamma}(u) - \d^2_{\beta\gamma}\phi^\alpha(u)\;, \\
	\label{eq:actionvect}
	(\phi\act \sigma)^\alpha(\phi(u)) &= \d_\beta \phi^\alpha(u) \, \sigma^\beta(u)\;,\\
	(\phi\act K)^\alpha_\eta(\phi(u)) \, \d_\beta \phi^\eta(u)  &= \d_\mu \phi^\alpha(u) \, K^\mu_\beta(u)\;.\label{eq:actiontens}
\end{equs}

Recall that the covariant derivative $\nabla_X Y$ \label{covariant derivative page ref} of a vector field $Y$ in the direction of 
another vector field $X$ is the vector field given by 
\begin{equ}[e:covDiff]
	(\nabla_X Y)^\alpha (u) = X^\beta(u)\,\d_\beta Y^\alpha(u) + \Gamma^\alpha_{\beta\gamma}(u) \,X^\beta(u) \,Y^\gamma(u)\;.
\end{equ}
It is straightforward to verify that this definition satisfies
\begin{equ}
	\phi \act (\nabla_X Y) =  (\phi \act \nabla)_{\phi \act X} (\phi \act Y)\;,
\end{equ}
where $\phi \act \nabla$ denotes the covariant differentiation built as in \eqref{e:covDiff}, but
with $\Gamma$ replaced by $\phi \act \Gamma$.

\begin{definition} \label{geo_terms} Given a vector space of decorated trees $ V $, the space $ V_{\geo} \subset V $   consists of those elements $\tau$ such that, for all
	$d, m \geq 1$ and all choices of $\Gamma$ and $ \sigma $ as above and all diffeomorphisms $ \varphi $ of $  \mathbb{R}^d$ 
homotopic to the identity, one has the identity
\begin{equs}
	\varphi \cdot \Upsilon_{\Gamma, \sigma}[\tau] = \Upsilon_{\varphi \cdot \Gamma, \varphi \cdot \sigma}[\tau].
	\end{equs}
\end{definition}
One then defines in this way the various geometric spaces: $ \CS_{\geo}^g \subset \CS^g $, $ \CS_{\geo,2n}^g \subset \CS^g_{2n} $,  $ \CS_{\geo}^c \subset \CS^c $, $ \CS_{\geo,2n}^c \subset \CS^c_{n} $.

Note that one can define the covariant derivative operation on the level of decorating trees by putting 
\begin{equs} \label{covariant_derivative_semi_linear}
    \Nabla_{\tau_1}\tau_2 = 
    \begin{tikzpicture}[scale=0.2,baseline=2]
      \draw[symbols]  (-.5,2.5) -- (0,0) ;
      \draw[tinydots] (0,0)  -- (0,-1.3);
      \node[var] (root) at (0,-0.1) {\tiny{$ \tau_1 $ }};
      \node[var] (diff) at (-0.5,2.5) {\tiny{$ \tau_2 $ }};
      \node[blank] at (0.3,1.25) {\tiny{}};
    \end{tikzpicture} \, +  \frac{1}{2}\;
  \begin{tikzpicture}[scale=0.2,baseline=2]
  \coordinate (root) at (0,0);
  \coordinate (t1) at (-1,2);
  \coordinate (t2) at (1,2);
  \draw[tinydots] (root)  -- +(0,-0.8);
  \draw[kernels2] (t1) -- (root);
  \draw[kernels2] (t2) -- (root);
  \node[not] (rootnode) at (root) {};
  \node[var] (t1) at (t1) {\tiny{$ \tau_1 $}};
  \node[var] (t1) at (t2) {\tiny{$ \tau_2 $}};
  \node[blank] at (-1.2,0.6) {\tiny{$$}};
  \node[blank] at (1.2,0.6) {\tiny{$$}};
\end{tikzpicture}
\end{equs}
where the $ \begin{tikzpicture}[scale=0.2,baseline=2]
  \draw[symbols]  (-.5,2.5) -- (0,0) ;
  \draw[tinydots] (0,0)  -- (0,-1.3);
  \node[var] (root) at (0,-0.1) {\tiny{$ \tau_1 $ }};
  \node[var] (diff) at (-0.5,2.5) {\tiny{$ \tau_2 $ }};
  \node[blank] at (0.3,1.25) {\tiny{}};
\end{tikzpicture} $  are all the the decorated trees obtained from the grafting of $ \tau_1 $ onto $ \tau_2 $ (connecting the root of $ \tau_1 $ via an edge to a node of $ \tau_2 $ and summing over all the possibilities) via an edge decorated by  $\<thin>$. Let us consider the following $15$ elements defined using iterations of covariant derivatives: \label{VV page ref}
\begin{equs}
	\label{eq:covderiv}
	\begin{aligned}
	\VV^g_4 & =
	\big\{ \Nabla_{\<generic>}\<generic>,  \Nabla_{\<genericb>}\Nabla_{\<generic>}\Nabla_{\<genericb>}\<generic>, 
	\Nabla_{\<generic>}\Nabla_{\<genericb>}\Nabla_{\<genericb>}\<generic>,
	\Nabla_{\<genericb>}\Nabla_{\<genericb>}\Nabla_{\<generic>}\<generic>, 
	\Nabla_{\<genericb>}\Nabla_{\Nabla_{\<genericb>}\<generic>}\<generic>, 
	\Nabla_{\Nabla_{\<genericb>}\<generic>} \Nabla_{\<genericb>}\<generic>,
	\Nabla_{\Nabla_{\<generic>}\<genericb>} \Nabla_{\<genericb>}\<generic>, \\ & 
	\Nabla_{\Nabla_{\<genericb>}\<genericb>}\Nabla_{\<generic>}\<generic>, 
	\Nabla_{\Nabla_{\<genericb>}\Nabla_{\<genericb>}\<generic>}\<generic>,
	\Nabla_{\Nabla_{\<genericb>}\Nabla_{\<generic>}\<generic>}\<genericb>,
	\Nabla_{\Nabla_{\Nabla_{\<genericb>}\<generic>}\<generic>}\<genericb>,
	\Nabla_{\Nabla_{\Nabla_{\<genericb>}\<generic>}\<genericb>}\<generic>,
	\Nabla_{\<generic>}\Nabla_{\Nabla_{\<genericb>}\<generic>}\<genericb>,
	\Nabla_{\Nabla_{\<generic>}\Nabla_{\<genericb>}\<generic>}\<genericb>,
	\Nabla_{\Nabla_{\Nabla_{\<generic>}\<generic>}\<genericb>}\<genericb>,
	\Nabla_{\<genericb>}\Nabla_{\Nabla_{\<generic>}\<generic>}\<genericb>\ 
	\big\}.
	\end{aligned}
\end{equs}
It was established in  \cite{BGHZ} that these elements span the vector space $\CS_{\tiny{\geo},4}^g$, more precisely 
\begin{theorem} \label{thm_covariant_derivatives}
For all sufficiently high dimensions $d, m$, one has $
		\CS_{\tiny{\geo},4}^g = \left\langle \VV_4^g \right\rangle $, and the dimension of the previous space is $15$.
	\end{theorem}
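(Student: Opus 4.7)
I would split the argument into the two standard inclusions.

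For the inclusion $\langle \VV_4^g\rangle\subseteq \CS_{\tiny{\geo},4}^g$, I would first check that the tree-level operation $\Nabla$ defined in \eqref{covariant_derivative_semi_linear} corresponds under $\Upsilon_{\Gamma,\sigma}$ to the usual covariant derivative $\nabla$ of vector fields given by \eqref{e:covDiff}. Concretely, I would verify by a direct computation from the definition \eqref{recursive_Upsilon} that $\Upsilon_{\Gamma,\sigma}[\Nabla_{\tau_1}\tau_2] = \nabla_{\Upsilon_{\Gamma,\sigma}[\tau_1]}\Upsilon_{\Gamma,\sigma}[\tau_2]$ (the two summands in \eqref{covariant_derivative_semi_linear} produce respectively the derivative term $X^\beta \partial_\beta Y^\alpha$ and the Christoffel term $\Gamma^\alpha_{\beta\gamma}X^\beta Y^\gamma$, with symmetry factors accounted for by the $\tfrac12$). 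Once this identification is made, naturality of $\nabla$ under diffeomorphisms, that is $\phi \act (\nabla_X Y) = (\phi \act \nabla)_{\phi \act X}(\phi \act Y)$, together with the transformation rules \eqref{eq:actioncristo}--\eqref{eq:actionvect}, gives by an easy induction on tree size that each element of $\VV_4^g$ is geometric.

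For the reverse inclusion $\CS_{\tiny{\geo},4}^g\subseteq \langle \VV_4^g\rangle$, I would use the infinitesimal form of the invariance condition. Writing $\phi = \id + \eps v$ and linearising in $v$, one obtains from \eqref{eq:actioncristo} that the transformation of $\Gamma$ produces an inhomogeneous term $\partial^2 v$, while the transformation of $\sigma$ in \eqref{eq:actionvect} contributes through $\partial v$. Expanding $\Upsilon_{\varphi\cdot \Gamma,\varphi\cdot\sigma}[\tau] - \varphi\cdot\Upsilon_{\Gamma,\sigma}[\tau]$ at first order in $\eps$ and collecting coefficients of $\partial^2 v$, each decorated tree $\tau\in\SS_4$ produces a linear combination of trees with one distinguished vertex, giving rise to a linear map $\hat\phi_{\geo}\colon \CS_4\to W$ into a suitable auxiliary space $W$, such that $\CS_{\tiny{\geo},4}^g = \ker \hat\phi_{\geo}$. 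The plan is then to explicitly compute $\hat\phi_{\geo}(\tau)$ for each of the 54 trees in $\SS_4$ and read off sufficiently many independent relations to force $\dim\ker\hat\phi_{\geo}\le 15$.

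The two dimensions then match provided that the 15 elements of $\VV_4^g$ are linearly independent in $\CS_4$. To see this, I would pass to their elementary differentials and test against carefully chosen $\sigma_i$ and $\Gamma$ (for instance, monomial vector fields and Christoffel symbols with generic coefficients in a sufficiently large dimension $d$ and with sufficiently many noises $m$), so that distinct trees in $\VV_4^g$ give rise to linearly independent functions of $u$: here the hypothesis of high enough $d,m$ enters through the injectivity of $\Upsilon_{\Gamma,\sigma}$ on the span under consideration.

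\textbf{Main obstacle.} The easy part is the inclusion coming from naturality of $\nabla$; the delicate part is the upper bound $\dim\ker\hat\phi_{\geo}\le 15$. The explicit form of $\hat\phi_{\geo}$ on each of the 54 trees must be computed, the resulting system of linear constraints inspected, and enough independent relations extracted. This is essentially a large but finite case analysis; the difficulty is purely combinatorial, not conceptual, and it is exactly this ad hoc computation that the rest of the paper seeks to replace by a homological/operadic argument valid in the full subcritical regime.
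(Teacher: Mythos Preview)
Your proposal is correct and follows essentially the same strategy that the paper recalls from \cite{BGHZ}: the easy inclusion via naturality of the covariant derivative, the identification $\CS^g_{\geo,4}=\CS^g_4\cap\ker\hat\phi_\geo$ by linearising the diffeomorphism action (your ``distinguished vertex'' is precisely the extra node $\<diff>$, and the map is $\hat\phi_\geo(\tau)=\phi_\geo(\tau)-[\tau,\<diff>]$ with $\phi_\geo$ given by the local rules \eqref{def_phi_geo_1}), and then the hard step of extracting enough independent linear relations from the action of $\hat\phi_\geo$ on the $54$ trees to force $\dim\ker\hat\phi_\geo\le 15$. Your diagnosis of where the difficulty lies, and of why high $d,m$ are needed (injectivity of $\Upsilon^h_{\Gamma,\sigma}$), matches the paper exactly.
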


Let us briefly recall the strategy of the proof. One starts by introducing a new space of decorated trees that contains another type of nodes denoted by $ \<diff> $. We consider a new rule given by 
\begin{equs}
    R_{\<diff>}(\<thin>) = \{(\<thin>^k,\<generic>_i), \, (\<thin>^k,\<diff>), (\<thick>^2,\<thin>^k)\,:\, k \ge 0 \}\;,
  \end{equs}
and we consider decorated trees generated from this rules with only one node of type $ \<diff> $. One can define the spaces   $ \CS_{\<diff>,2n}^g  $ and $ \CS^c_{\<diff>,n} $ with this extra generator and gets canonical injections of  $ \CS_{2n}^g  $ and $ \CS^c_{n} $ into the previous spaces.
In \cite[Sec. 6.1]{BGHZ}, a map $\phi_\geo: \CS^{j}  \to \CS^{j}_{\<diff>}  $ with $ j \in \lbrace g,c\rbrace $,
was defined as the unique infinitesimal morphism of $T_\d$-algebras such that
$\phi_\geo$ acts by setting
\begin{equ} \label{def_phi_geo_1}
  \def\offset{.8,1.3}
  \tikzset{external/export next=false}
  \begin{tikzpicture}[scale=0.2,baseline=-2]
    \draw[tinydots] (0,0)  -- (0,-0.8);
    \node[xi] (root) at (0,0) {};
  \end{tikzpicture}
  \;\mapsto\;
  \tikzset{external/export next=false}
  \begin{tikzpicture}[scale=0.2,baseline=2]
    \draw[symbols]  (-.5,2) -- (0,0) ;
    \draw[tinydots] (0,0)  -- (0,-0.8);
    \node[diff] (root) at (0,-0.1) {};
    \node[xi] (diff) at (-0.5,2) {};
  \end{tikzpicture}
  \;-\;
  \tikzset{external/export next=false}
  \begin{tikzpicture}[scale=0.2,baseline=2]
    \draw[symbols]  (-.5,2) -- (0,0) ;
    \draw[tinydots] (0,0)  -- (0,-0.8);
    \node[xi] (xi) at (0,0) {};
    \node[diff] (root) at (-0.5,2) {};
  \end{tikzpicture}\;,\qquad\qquad
  \def\offset{.8,1.3}
  \def\offsett{1.3,.8}
  \tikzset{external/export next=false}
  \begin{tikzpicture}[scale=0.2,baseline=-2]
    \coordinate (root) at (0,0);
    \coordinate (t1) at (-.8,1.5);
    \coordinate (t2) at (.8,1.5);
    \draw[tinydots] (root)  -- +(0,-0.8);
    \draw[kernels2] (t1) -- (root);
    \draw[kernels2] (t2) -- (root);
    \node[not] (rootnode) at (root) {};
  \end{tikzpicture}
  \;\mapsto\;
  \tikzset{external/export next=false}
  \begin{tikzpicture}[scale=0.2,baseline=-.25cm]
    \coordinate (root) at (0,0);
    \coordinate (tri) at (0,-2);
    \coordinate (t1) at (-.8,1.5);
    \coordinate (t2) at (.8,1.5);
    \draw[tinydots] (tri)  -- +(0,-0.8);
    \draw[kernels2] (t1) -- (root);
    \draw[kernels2] (t2) -- (root);
    \draw[symbols] (root) -- (tri);
    \node[not] (rootnode) at (root) {};
    \node[diff] (trinode) at (tri) {};
  \end{tikzpicture}
  \;-\;
  \tikzset{external/export next=false}
  \begin{tikzpicture}[scale=0.2,baseline=2]
    \coordinate (root) at (0,0);
    \coordinate (tri) at (1.2,1.2);
    \coordinate (t1) at (-1.2,1.2);
    \coordinate (t2) at (0,1.7);
    \draw[tinydots] (root)  -- +(0,-0.8);
    \draw[kernels2] (t1) -- (root);
    \draw[kernels2] (t2) -- (root);
    \draw[symbols] (root) -- (tri);
    \node[not] (rootnode) at (root) {};
    \node[diff] (trinode) at (tri) {};
  \end{tikzpicture}
  \;-\;
  \tikzset{external/export next=false}
  \begin{tikzpicture}[scale=0.2,baseline=-2]
    \coordinate (root) at (0,0);
    \coordinate (t1) at (-.5,2.5);
    \coordinate (tri) at (1,1);
    \coordinate (t2) at (1,2.5);
    \draw[tinydots] (root)  -- +(0,-0.8);
    \draw[kernels2] (t1) -- (root);
    \draw[symbols] (t2) -- (tri);
    \draw[kernels2] (tri) -- (root);
    \node[not] (rootnode) at (root) {};
    \node[diff] (trinode) at (tri) {};
  \end{tikzpicture}
  \;-\;
  \tikzset{external/export next=false}
  \begin{tikzpicture}[scale=0.2,baseline=-2]
    \coordinate (root) at (0,0);
    \coordinate (t1) at (.5,2.5);
    \coordinate (tri) at (-1,1);
    \coordinate (t2) at (-1,2.5);
    \draw[tinydots] (root)  -- +(0,-0.8);
    \draw[kernels2] (t1) -- (root);
    \draw[symbols] (t2) -- (tri);
    \draw[kernels2] (tri) -- (root);
    \node[not] (rootnode) at (root) {};
    \node[diff] (trinode) at (tri) {};
  \end{tikzpicture}
  \;-\;2
  \tikzset{external/export next=false}
  \begin{tikzpicture}[scale=0.2,baseline=-2]
    \coordinate (root) at (0,0);
    \coordinate (t1) at (-.8,1.5);
    \coordinate (t2) at (0.8,1.5);
    \draw[tinydots] (root)  -- +(0,-0.8);
    \draw[symbols] (t1) -- (root);
    \draw[symbols] (t2) -- (root);
    \node[diff] (rootnode) at (root) {};
  \end{tikzpicture}\;,
\end{equ}
which has a very natural interpretation in terms of how vector fields and Christoffel symbols transform under infinitesimal changes of coordinates. Furthermore, one defines the map $\hat \phi_\geo (\tau)$ by 
\begin{equ} \label{def_phi_geo_2}
  \hat \phi_\geo (\tau) = \phi_\geo(\tau) - [\tau,\<diff>]\;,
\end{equ}
for any $\tau \in \CS^j$. It is established in \cite[Prop.~6.2]{BGHZ} that $\CS^g_{\geo,4} = \CS^g_4 \cap \ker \hat \phi_\geo$. In order to conclude, one wants to show that
$ \CS^g_4 \cap \ker \hat \phi_\geo = \left\langle \VV^g_4 \right\rangle $. This is where the reasoning in \cite{BGHZ} starts to be specific by using a counting dimension argument. Indeed, one first has to compute the dimension of $\left\langle \VV^g_4 \right\rangle$. Then, one derives independent equations for $\ker \hat \phi_\geo $ in order to reach the dimension of  $\left\langle \VV^g_4 \right\rangle$. One concludes from the inclusion $ \left\langle \VV_4^g \right\rangle \subset \CS_4^g \cap \ker \hat \phi_\geo $.

\begin{remark}
Theorem \ref{thm_covariant_derivatives} has been proved only for $ \CS_4^g $; most of the arguments of its proof rely on computations by hand that prevent any generalisation for many years in the absence of new ideas.
\end{remark}

As  a consequence of Theorem \ref{thm_covariant_derivatives}, one is able to find suitable counter-terms in order to guarantee the chain rule property for the solution of \eqref{eq:renorm nonlocal1}.
\begin{theorem}\label{chain:rule}
In Theorem \ref{thm:main renormalisation}, one can choose the renormalisation constants $C_\eps(\tau)$ in such a way
that 
	\begin{equs}
		\sum_{\tau \in \SS_4} C_{\eps}(\tau) \frac{ \tau}{ S(\tau)}
	\end{equs}
is generated by the space $ \VV_4 $. Then the equations \eqref{eq:renorm nonlocal1} transform according to the chain rule under composition with diffeomorphisms. 
\end{theorem}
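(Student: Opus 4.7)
The plan is to deduce Theorem \ref{chain:rule} by combining three ingredients: the non-uniqueness of the renormalisation constants in Theorem \ref{thm:main renormalisation}, the geometric characterisation from Definition \ref{geo_terms}, and the explicit basis given by Theorem \ref{thm_covariant_derivatives}. First, I recall the standard fact that the BPHZ construction underlying Theorem \ref{thm:main renormalisation} determines the constants $C_\eps(\tau)$ only up to $\eps$-independent shifts $C_\eps(\tau) \to C_\eps(\tau) + c(\tau)$ with $c(\tau) \in \mathbb{R}$: any such modification still yields a local-in-time convergent solution, the limit being altered only by the addition of the finite counter-term $\sum_\tau c(\tau)\Upsilon_{\Gamma,\sigma}[\tau]/S(\tau)$ on the right-hand side. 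This gives an $|\SS_4|$-dimensional affine space of admissible renormalisations.

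Next, I translate the chain rule condition into an algebraic constraint on the combination of trees appearing in the counter-term. Apply the substitution $v = \varphi(u_\eps)$ to the renormalised equation \eqref{eq:renorm nonlocal1}, where $\varphi$ is a diffeomorphism homotopic to the identity. Using the transformation laws \eqref{eq:actioncristo}--\eqref{eq:actiontens} for $\Gamma$, $\sigma$ and $K$, the principal part of the equation becomes precisely the corresponding part of \eqref{eq:renorm nonlocal1} with $(\Gamma,\sigma,K,h)$ replaced by $(\varphi\cdot\Gamma,\varphi\cdot\sigma,\varphi\cdot K, \varphi\cdot h)$. The counter-term then contributes correctly if and only if
\begin{equs}
\varphi \cdot \sum_{\tau \in \SS_4} \frac{C_\eps(\tau)}{S(\tau)} \Upsilon_{\Gamma,\sigma}[\tau]
= \sum_{\tau \in \SS_4} \frac{C_\eps(\tau)}{S(\tau)} \Upsilon_{\varphi\cdot\Gamma,\varphi\cdot\sigma}[\tau],
\end{equs}
which by Definition \ref{geo_terms} is exactly the statement that the element $\sum_\tau C_\eps(\tau)\tau/S(\tau) \in \CS_4^g$ lies in the geometric subspace $\CS_{\geo,4}^g$.

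Invoking Theorem \ref{thm_covariant_derivatives}, this geometric subspace coincides with $\langle \VV_4^g\rangle$, so it suffices to adjust the $C_\eps(\tau)$ within the admissible affine family so that the resulting linear combination lies in $\langle \VV_4^g\rangle$. The key observation is that the divergent parts of the BPHZ constants, produced by the symmetric mollification $\varrho_\eps$ and the diffeomorphism-compatible scaling of the heat kernel, respect the geometric structure of \eqref{e:genClass}: formally, the equation itself is invariant under the chain rule, and any local counter-term required to remove ultraviolet divergences arising from coinciding points inherits this invariance. Consequently, the projection of the BPHZ combination onto a complement of $\langle \VV_4^g\rangle$ inside $\CS_4^g$ is a bounded (in fact $\eps$-uniformly finite) quantity, which can be cancelled by a suitable finite shift of the $C_\eps(\tau)$'s within our freedom.

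The main obstacle is justifying rigorously that the divergent part of the BPHZ counter-terms already lies in $\CS_{\geo,4}^g$. The cleanest way to handle this is to construct, in parallel, a geometry-adapted renormalisation scheme whose counter-terms lie in $\langle\VV_4^g\rangle$ by design: one writes the required corrections directly in terms of iterated covariant derivatives acting on $\sigma_i$, so that the scheme is manifestly diffeomorphism-covariant, and then verifies that it differs from BPHZ by finite quantities admissible under the shift freedom described above. Once this is done, choosing the constants accordingly makes the total counter-term a linear combination of elements of $\VV_4^g$, and the previous paragraph then yields the claimed invariance of \eqref{eq:renorm nonlocal1} under composition with diffeomorphisms.
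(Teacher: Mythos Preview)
Your architecture is right: use the freedom in the renormalisation constants, identify the chain-rule condition with membership in $\CS_{\geo,4}^g=\langle\VV_4^g\rangle$ via Definition~\ref{geo_terms} and Theorem~\ref{thm_covariant_derivatives}, and then argue that the non-geometric part of the BPHZ counter-term can be removed by an admissible shift. But the argument has a genuine gap precisely at the step you flag as ``the main obstacle''.

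The heuristic that ``any local counter-term required to remove ultraviolet divergences arising from coinciding points inherits this invariance'' is not a proof, and in fact is not true as stated: the BPHZ procedure is defined by subtracting Taylor jets in a fixed coordinate system and has no a priori reason to respect diffeomorphism covariance. Your proposed resolution (build a manifestly covariant scheme and then ``verify that it differs from BPHZ by finite quantities'') simply restates what has to be shown; nothing in your text establishes that finiteness. The paper itself does not give a self-contained proof of Theorem~\ref{chain:rule} either: it states the result and points to \cite{BGHZ}. The actual analytical ingredient, invoked in Remark~\ref{remark_over_paremetrisation} and again in the proof of Theorem~\ref{thm:main renormalisation_intro_quasi}, is \cite[Prop.~3.9]{BGHZ}: the projection of the BPHZ element $\sum_\tau C_\eps(\tau)\tau/S(\tau)$ onto $\CS_{\geo}^{\bot}$ converges as $\eps\to 0$ to a finite, mollifier-independent limit $v_{\geo}$. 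That is a nontrivial statement about how the BPHZ character behaves, and it is what makes the shift admissible.

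One smaller correction: in your first paragraph you restrict to $\eps$-independent shifts $c(\tau)\in\mathbb{R}$. That is too rigid. The non-geometric projection of the BPHZ constants is $\eps$-dependent; to land exactly in $\langle\VV_4^g\rangle$ for every $\eps$ you must subtract an $\eps$-dependent quantity. What keeps you inside the conclusions of Theorem~\ref{thm:main renormalisation} is that this quantity converges as $\eps\to 0$ (again by \cite[Prop.~3.9]{BGHZ}), so the modified solutions still converge.
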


\subsection{The operad of Christoffel trees}

Let us develop an algebraic formalism corresponding to the combinatorics of trees that we have seen in the previous sections. Concretely, we shall define a new operad which we shall call the operad of Christoffel trees, and unravel its (rather simple) algebraic meaning. This will be a crucial step towards proving our main result. 

\begin{definition} \label{Christoffel trees}
A \emph{Christoffel tree} is a rooted tree with black and white vertices such that every black vertex has at least two input edges, and moreover for each black vertex two of its input edges are declared ``special''. In all figures the special edges will be drawn using thick lines. The \emph{species of Christoffel trees} assigns to every finite set $S$ the vector space $\NT(S)$ with a basis of all Christoffel trees whose white vertices are put in a bijection with $S$. 
\end{definition}

One can define an operad structure on the linear species $\NT$ by a rule similar to the one defining an operad on the species of rooted trees \cite{MR1827084}: the insertion $T_1\circ_v T_2$ of a Christoffel tree $T_2\in \NT(V_2)$  in the place of the vertex $v\in V_1$ of a Christoffel tree $T_1\in \NT(V_1)$ is given by  
 \[
T_1\circ_v T_2:=\sum_{f\colon \mathbb{E}^{+}_{\graftI }(v)\to V_2} T_1\circ_v^f T_2 .
 \] 
Here $\mathbb{E}^{+}_{\graftI }(v)$ is the set of incoming edges of the vertex $v$ in $T_1$; the tree $S\circ_i^f T$ is obtained by replacing the vertex $i$ of the tree $S$ by the tree $T$, and grafting each incoming edge $e$ of $v$ at the vertex $f(e)$ of $T_2$. Note that this grafting happens at all vertices, not just the white ones. For example, we have

\begin{gather*}
  \begin{tikzpicture}[scale=0.2,baseline=-5]
    \coordinate (root) at (0,-2);
    \coordinate (center) at (0,2);
    \draw[] (root) -- (center);
    \node[var1] (rootnode) at (center) {$ 2 $};
    \node[var1] (rootnode) at (root) { $  1 $};
  \end{tikzpicture}\,\circ_1 \begin{tikzpicture}[scale=0.2,baseline=-5]
\coordinate (root) at (0,-2);
\coordinate (right) at (2,2);
\coordinate (left) at (-2,2);
\draw[kernels2] (root) -- (left);
\draw[kernels2] (root) -- (right);
\node[var1] (rootnode) at (left) {$ a $};
\node[var1] (rootnode) at (right) {$ b $};
\node[not] (rootnode) at (root) {};
\end{tikzpicture}=
\begin{tikzpicture}[scale=0.2,baseline=-5]
\coordinate (root) at (0,-2);
\coordinate (right) at (2,2);
\coordinate (left) at (-2,2);
\coordinate (leftv) at (-2,6);
\draw[] (left) -- (leftv);
\draw[kernels2] (root) -- (left);
\draw[kernels2] (root) -- (right);
\node[var1] (rootnode) at (left) {$ a $};
\node[var1] (rootnode) at (right) {$ b $};
\node[not] (rootnode) at (root) {};
\node[var1] (rootnode) at (leftv) { $  2 $};
\end{tikzpicture}+
\begin{tikzpicture}[scale=0.2,baseline=-5]
\coordinate (root) at (0,-2);
\coordinate (right) at (2,2);
\coordinate (rightv) at (2,6);
\coordinate (left) at (-2,2);
\draw[] (right) -- (rightv);
\draw[kernels2] (root) -- (left);
\draw[kernels2] (root) -- (right);
\node[var1] (rootnode) at (left) {$ a $};
\node[var1] (rootnode) at (right) {$ b $};
\node[not] (rootnode) at (root) {};
\node[var1] (rootnode) at (rightv) { $  2 $};
\end{tikzpicture}+
\begin{tikzpicture}[scale=0.2,baseline=-5]
\coordinate (root) at (0,-2);
\coordinate (right) at (3,2);
\coordinate (left) at (0,2);
\coordinate (newv) at (-3,2);
\draw[] (root) -- (newv);
\draw[kernels2] (root) -- (left);
\draw[kernels2] (root) -- (right);
\node[var1] (rootnode) at (left) {$ a $};
\node[var1] (rootnode) at (right) {$ b $};
\node[not] (rootnode) at (root) {};
\node[var1] (rootnode) at (newv) { $  2 $};
\end{tikzpicture}
,\\
  \begin{tikzpicture}[scale=0.2,baseline=-5]
    \coordinate (root) at (0,-2);
    \coordinate (center) at (0,2);
    \draw[] (root) -- (center);
    \node[var1] (rootnode) at (center) {$ 2 $};
    \node[var1] (rootnode) at (root) { $  1 $};
  \end{tikzpicture}\, \circ_2 \begin{tikzpicture}[scale=0.2,baseline=-5]
\coordinate (root) at (0,-2);
\coordinate (right) at (2,2);
\coordinate (left) at (-2,2);
\draw[kernels2] (root) -- (left);
\draw[kernels2] (root) -- (right);
\node[var1] (rootnode) at (left) {$ a $};
\node[var1] (rootnode) at (right) {$ b $};
\node[not] (rootnode) at (root) {};
\end{tikzpicture}=
\begin{tikzpicture}[scale=0.2,baseline=-5]
\coordinate (root) at (0,-4);
\coordinate (vert) at (0,-1);
\coordinate (right) at (2,3);
\coordinate (left) at (-2,3);
\draw[] (root) -- (vert);
\draw[kernels2] (vert) -- (left);
\draw[kernels2] (vert) -- (right);
\node[var1] (rootnode) at (left) {$ a $};
\node[var1] (rootnode) at (right) {$ b $};
\node[not] (rootnode) at (vert) {};
\node[var1] (rootnode) at (root) { $  1 $};
\end{tikzpicture}
.
\end{gather*}
Note that if we think of Christoffel trees as representing iterated derivatives of expressions involving the Christoffel symbols as in Equation \eqref{interpretation_renormalisation}, our formulae literally correspond to the rule of computing the derivative of the product. \\

To state the following result, recall that $\ComMag$ is the operad of commutative magmatic algebras, that is algebras with one commutative operation that does not satisfy any further identities.

\begin{proposition} \label{operad_christoffel}
The insertion operations $\circ_v$ make the linear species $\NT$ into an operad. That operad is generated by its binary operations. Moreover, that operad is isomorphic to the coproduct of the operads $\PL$ and $\ComMag$.
\end{proposition}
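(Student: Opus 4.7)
The plan is to verify the three assertions together by identifying $\NT$ with the coproduct $\PL\vee\ComMag$ via the universal property. First, I would check the sequential and parallel axioms for the partial compositions $\circ_v$. The verification is formally the same as for the Chapoton--Livernet pre-Lie operad on rooted trees~\cite{MR1827084}: both axioms reduce to a natural bijection between sums of grafting functions $f\colon \mathbb{E}^{+}_{\graftI }(v)\to V_2$ which does not depend on the colours or edge markings of the vertices involved, and $\Sigma_n$-equivariance is built into the species structure.

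Next, I would single out the two binary Christoffel trees: $T_{\triangleleft}\in\NT(\{1,2\})$ consisting of a white root and a white leaf joined by a thin edge, and $T_{\bullet}\in\NT(\{1,2\})$ consisting of a black root and two white leaves joined by the unordered pair of thick edges. By construction $T_{\bullet}$ is commutative, and a direct computation using the insertion rule shows that $T_{\triangleleft}$ satisfies the pre-Lie identity: since no black vertices appear in any intermediate step, the computation reduces verbatim to the Chapoton--Livernet one. The universal property of the coproduct of operads then produces an operad morphism
\[
\varphi\colon\PL\vee\ComMag\to\NT
\]
sending $\triangleleft$ to $T_{\triangleleft}$ and $\bullet$ to $T_{\bullet}$.

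To prove that $\varphi$ is an isomorphism I would combine a surjectivity argument with a generating function comparison. The species decomposition obtained by splitting a Christoffel tree according to the colour of its root, $\NT\cong(E_1+E_2(\NT))\cdot(E\circ\NT)$, yields the functional equation
\[
f_{\NT}=\Bigl(t+\tfrac12 f_{\NT}^2\Bigr)e^{f_{\NT}}.
\]
A short computation using the recursive formulas $(\PL\vee\ComMag)_1=\overline{\PL}\circ(\PL\vee\ComMag)_2$ and $(\PL\vee\ComMag)_2=\overline{\ComMag}\circ(\PL\vee\ComMag)_1$ recalled earlier in the paper, together with the classical functional equations $f_{\PL}(x)=xe^{f_{\PL}(x)}$ and $f_{\ComMag}(x)=x+f_{\ComMag}(x)^2/2$, produces exactly the same equation for $f_{\PL\vee\ComMag}$ after eliminating the auxiliary series, so the two dimensions agree at every arity. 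Surjectivity of $\varphi$ is then proved by induction on the arity of a Christoffel tree $T$: for a white-rooted $T$ one composes the pre-Lie corolla in $\PL$ with the inductive preimages of the subtrees, while for a black-rooted $T$ one first forms the commutative product of the preimages of the two thick subtrees and then isolates the desired tree inside a pre-Lie composition by subtracting smaller Christoffel trees that by induction already lie in the image. Together, surjectivity and the equality of generating functions force $\varphi$ to be an isomorphism; since $\PL\vee\ComMag$ is by construction generated by the two binary generators $\triangleleft$ and $\bullet$, this immediately implies the generation of $\NT$ by its binary operations. The main technical obstacle will be the inductive step in the black-rooted case, where the careful bookkeeping needed to isolate a single Christoffel tree from a sum of graftings is the most delicate part of the argument.
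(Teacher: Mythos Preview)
Your proposal is correct and follows the same overall architecture as the paper: verify the operad axioms by analogy with the Chapoton--Livernet rooted trees operad, build a map from $\PL\vee\ComMag$, prove it surjective, and conclude it is an isomorphism by comparing exponential generating functions derived from the species decomposition $\NT\cong(E_1+E_2(\NT))\cdot(E\circ\NT)$.

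The one genuine difference is in how you obtain the functional equation for $f_{\PL\vee\ComMag}$. The paper invokes Koszul duality: the coproduct of Koszul operads is Koszul with quadratic dual the connected sum of the duals, and since $\PL^{!}=\Perm$ and $\ComMag^{!}$ is concentrated in arities $1,2$, the dual generating series is $te^t+t^2/2$, whence the inversion formula gives $f=(t+f^2/2)e^f$ directly. Your route through the recursive coproduct description and the individual functional equations $f_{\PL}(x)=xe^{f_{\PL}(x)}$, $f_{\ComMag}(x)=x+f_{\ComMag}(x)^2/2$ is more elementary---it does not require knowing that coproducts preserve Koszulness---but involves a small elimination step with the auxiliary series $A,B$ for the two halves of the coproduct. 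Both lead to the same equation.

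For surjectivity, the paper is slightly more explicit than your sketch: rather than a two-case induction on the root colour, it reduces to generating the ``Christoffel corollas'' $T_k$ (one black root with $k+2$ white leaves) and writes down the closed formula $S\circ_1 T_{k-1}-\sum_{i=1}^{k+1}T_{k-1}\circ_i S=T_k$, which cleanly isolates the corolla with one extra thin input. This is exactly the ``subtracting smaller Christoffel trees'' you allude to in your black-rooted inductive step, so you may wish to adopt the corolla formulation to make that step precise.
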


\begin{proof}
Proving that the operad axioms are satisfied is completely analogous to the corresponding properties for the rooted trees operad~\cite{MR1827084}. The binary operations of our operad are 
\begin{equs}
  \begin{tikzpicture}[scale=0.2,baseline=-5]
    \coordinate (root) at (0,-2);
    \coordinate (center) at (0,2);
    \draw[] (root) -- (center);
    \node[var1] (rootnode) at (center) {$ 2 $};
    \node[var1] (rootnode) at (root) { $  1 $};
  \end{tikzpicture} \, , \quad \begin{tikzpicture}[scale=0.2,baseline=-5]
  \coordinate (root) at (0,-2);
  \coordinate (center) at (0,2);
  \draw[] (root) -- (center);
  \node[var1] (rootnode) at (center) {$ 1 $};
  \node[var1] (rootnode) at (root) { $  2 $};
\end{tikzpicture} \, , \quad \begin{tikzpicture}[scale=0.2,baseline=-5]
\coordinate (root) at (0,-2);
\coordinate (right) at (2,2);
\coordinate (left) at (-2,2);
\draw[kernels2] (root) -- (left);
\draw[kernels2] (root) -- (right);
\node[var1] (rootnode) at (left) {$ 1 $};
\node[var1] (rootnode) at (right) {$ 2 $};
\node[not] (rootnode) at (root) {};
\end{tikzpicture}.
\end{equs}
To show that they generate everything, we note that the suboperad spanned by trees with white vertices only is clearly isomorphic to the rooted trees operad of Chapoton--Livernet, and hence is generated by its binary operations. To show that the operad of Christoffel trees is generated by its binary operations, it is enough to generate all the ``Christoffel corollas'', that is trees with one black vertex connected to $k+2$ white vertices. This is easily done by induction: for $k=0$ we have a binary operation, and for $k>0$, we note that
 \[
S\circ_1 T_{k-1}-\sum_{i=1}^{k+1}T_{k-1}\circ_i S=T_k,
 \]
where $S$ is the two-vertex tree with the root $1$ and leaf $k+2$, $T_{k-1}$ is a corolla with $k+1=(k-1)+2$ white vertices, and $T_k$ is a corolla with $k+2$ white vertices. Finally, since the rooted trees operad is isomorphic to the operad $\PL$, and the operad obtained by iterated compositions of the third binary operation is clearly isomorphic to the operad $\ComMag$, our operad received a surjective map from the coproduct $\PL\vee\ComMag$, so it is enough to show that the dimensions of components of the operad $\NT$ and the operad $\PL\vee\ComMag$ are the same. The coproduct of Koszul operads is known to be Koszul, and the quadratic dual is the connected sum of the quadratic dual operads. Since the quadratic dual of the operad $\PL$ is the operad usually denoted $\Perm$ with $\dim(\Perm(n))=n$ for all~$n$, and the quadratic dual of the operad $\ComMag$ is the operad supported at arities $1$ and $2$ (and one-dimensional in these arities), the exponential generating series for dimensions of components of the quadratic dual operad of $\PL\vee\ComMag$ is $t\exp(t)+\frac{t^2}2$. Using the functional equation relating the exponential generating series of a Koszul operad and its quadratic dual \cite{MR2954392}, we conclude that if we denote by $f$ the generating series of the operad $\PL\vee\ComMag$, we have
 \[
f\exp(-f)-\frac{f^2}2=t,
 \]
or, equivalently, 
 \[
f=t\exp(f)+\frac{f^2}2\exp(f).
 \]
At the same time, we have the following equation on the level of species: 
 \[
\NT=E_1\cdot E(\NT)\oplus E_2(\NT)\cdot E(\NT).
 \] 
Here $E=\bigoplus_{k\ge 0} E_k$ is the ``species of sets'', $E_n(I)=0$ unless $|I|=n$, in which case $E_n(I)=\kk\{I\}$. The way to prove that formula is as follows. First, we note that 
\begin{itemize}
\item $E(\NT)$ is the species of sets of Christoffel trees, 
\item $E_1\cdot E(\NT)$ is a species whose value on a set $I$ is a partition of $I$ into a singleton and its complement, and a structure of a set of Christoffel trees on that complement,
\item $(E_2\cdot E)(\NT)$ is a species whose value on a set $I$ is a partition of $I$ into several parts two of which are designated special, and a structure of a Christoffel tree on each part,
\end{itemize}
and we see that this precisely corresponds to the fact a Christoffel tree can either have a white root, in which case it is obtained by choosing the root label and a set of subtrees grafted at inputs of the root, or a black root, in which case it is obtained by choosing the set of two subtrees grafted at special inputs of the root and a set of subtrees grafted at other inputs. This species functional equation clearly gives the correct equation for exponential generating series, completing the proof.
\end{proof}

Let us remark that results of this section can be substantially generalised in the following way. One can consider the category of operads equipped with a map from the operad $\PL$; such operads are closely related to $T_\partial$-algebras from \cite{BGHZ}. Proposition \ref{operad_christoffel} asserts that $\NT$ is a free object in this category; it can be easily generalised to describe the free object generated by any given species. We have not yet found interesting applications of this more general construction, so we chose to avoid unnecessary generality and present the result and its proof for Christoffel trees.

\subsection{Generalisation for the full subcritical regime and cumulants}

The main result of this paper is a generalisation of Theorem~\ref{thm_covariant_derivatives}. We first denote by $ \VV_{2n}^{g} $ the set of iterated covariant derivatives involving $ 2 n $ generators that come in pairs, with the appropriate identifications, as in Equation~\ref{identification_trees}. Elements of $ \VV_{2n}^{g} $ could be seen as linear combinations of elements in $ \SS^g_{2n} $. The vector space  $ \VV_{n}^{c} $ in the case of noises described by their cumulants is defined similarly. For given noises $\xi_i$,  one has a set $ \VV_{\xi} $ of iterated covariant derivatives associated to $ \SS_{\xi}$ equal to one of the set described above.
\begin{theorem} \label{main_theorme_chain_rule}
	For sufficiently high dimensions $d, m$,
	one has 
	\begin{equs}
	\CS_{\tiny{\geo},2n}^g = \left\langle \VV_{2n}^g \right\rangle, \quad  \CS_{\tiny{\geo},n}^c = \left\langle \VV_{n}^c \right\rangle.
	\end{equs}
Moreover, their dimensions can be computed by an explicit direct procedure using generating functions (formal power series).
\end{theorem}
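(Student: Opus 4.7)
The plan is to reduce the statement to the universal algebraic result Theorem~\ref{th:LA-operadic} and then specialise. First, I would generalise \cite[Prop.~6.2]{BGHZ} from $\SS_4$ to arbitrary sets $\SS_\xi$ of decorated trees: the equivalence $\CS_{\tiny{\geo}}^{j} = \CS^{j}\cap\ker\hat\phi_{\geo}$ for $j\in\{g,c\}$ is formal, depending only on the interpretation of $\hat\phi_\geo$ as the infinitesimal obstruction to commuting with diffeomorphisms, and carries over verbatim once one allows more noises per colour (Gaussian case with more vertices) or partitions of noise labels (cumulant case). This reduces the problem to computing $\CS^{j}_{2n}\cap\ker\hat\phi_{\geo}$ (resp.\ $\CS^{j}_{n}\cap\ker\hat\phi_{\geo}$).

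Second, I would lift the problem from the level of decorated trees to the level of the operad $\NT\cong\PL\vee\ComMag$ of Christoffel trees introduced in Proposition~\ref{operad_christoffel}. Under this identification, $\hat\phi_{\geo}$ is the image of the universal operad endomorphism $\hat\Phi_{\geo}$ after (i)~substituting noise generators into the arity slots and (ii)~passing to the appropriate coinvariants that implement the identifications of \eqref{identification_trees} and the partitioned-noise identifications in the cumulant case. The key observation, already familiar in invariant-theoretic computations, is that taking coinvariants under symmetric groups commutes with taking the kernel of an equivariant map, so it is enough to describe $\ker\hat\Phi_{\geo}$ once and for all on the operad, and only then take symmetric coinvariants and substitute noise generators.

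Third, I would invoke Proposition~\ref{correspondence_ker_phi_0}, which identifies $\ker\hat\Phi_{\geo}$ with the degree zero homology of the differential graded operad $(\PL\vee\ComMag\vee\kk\alpha, d_{\MC}+\mathrm{ad}_{\ell_1^{\alpha}}+d_0)$, and then apply the main algebraic input Theorem~\ref{th:twisting}, which states that this homology is concentrated in degree zero and is isomorphic to the operad $\LA$ of Lie-admissible algebras, giving Theorem~\ref{th:LA-operadic}. The isomorphism is then identified with the span of iterated covariant derivatives of the form \eqref{covariant_derivative_semi_linear}, precisely matching the definition of $\VV^g_{2n}$ and $\VV^c_{n}$. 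Specialising to a free algebra on the appropriate (symmetric or partition-indexed) set of generators and passing to coinvariants then yields the two claimed equalities, as recorded in Corollary~\ref{cor:gaussian-dim} and Corollary~\ref{cor:cumulant-dim}.

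Finally, for the dimension statement I would exploit the Koszul duality for $\PL\vee\ComMag$ (its quadratic dual is $\Perm\vee\uCom$ at arities $\le 2$, for which the character generating series in the ring of symmetric functions is explicit) to write down a functional equation in $\Lambda$ whose solution is the character series $F_{\LA}$; from this, counting dimensions of $\CS^g_{\tiny{\geo},2n}$ and $\CS^c_{\tiny{\geo},n}$ amounts to applying the standard symmetric-function operations corresponding to the free algebra on the given species of noise generators and extracting the coefficient. I expect the main obstacle to be not in the specialisation but in the homological input of Theorem~\ref{th:twisting}: proving that the perturbed differential $d_{\MC}+\mathrm{ad}_{\ell_1^{\alpha}}+d_0$ has homology concentrated in degree zero requires a careful spectral sequence argument comparing it with the operadic twisting of $\PL\vee\ComMag$, using that the extra term $d_0$ is a lower-order perturbation on an appropriate filtration, and reducing to the known fact that the twisting of the pre-Lie operad has homology concentrated in degree zero \cite{dotsenko2021homotopical}.
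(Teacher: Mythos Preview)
Your proposal is correct and follows essentially the same route as the paper: reduce to $\ker\hat\phi_{\geo}$ via \cite[Prop.~6.2]{BGHZ}, lift to the operad level, apply Proposition~\ref{correspondence_ker_phi_0} and Theorem~\ref{th:twisting}, then specialise via Corollaries~\ref{cor:gaussian-dim} and~\ref{cor:cumulant-dim}, with dimensions read off from symmetric-function manipulations. One small slip: for the dimension count you want Koszul duality for $\LA=\Lie\vee\ComMag$ (whose dual has character series $t\exp(t)+\frac{t^2}{2}$ at the level of generating functions), not for $\PL\vee\ComMag$; also the Koszul dual of a coproduct is the connected sum of the duals, not their coproduct.
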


Before starting to prove this theorem, let us remark that a high dimension is needed for characterising $ \CS^g_{\geo,2n}  $
and $ \CS^c_{\geo,n} $
\begin{remark} \label{remark_over_paremetrisation}
  In the proof of \cite[Prop.~6.2]{BGHZ}, one derives the following characterisation of $  \CS^j_{\geo,n}$ ($ j \in \lbrace g,c \rbrace $): 
  \begin{equs}
    \tau  \in  \CS^j_{\geo,n} \quad  \text{ if and only if} \quad \Upsilon^{h}_{\Gamma,\sigma}[\hat \phi_\geo(\tau)] = 0, \ h : \mathbb{R}^d \rightarrow \mathbb{R}^d
  \end{equs}
where  $ \Upsilon^{h}_{\Gamma,\sigma} $ is an extension of $ \Upsilon_{\Gamma,\sigma} $ that sends $ \<diff> $ to $h$. Then, one uses the injectivity of the maps $  \Upsilon^{h}_{\Gamma,\sigma}  $ (see \cite[Thm. 5.25, Thm
5.31]{BGHZ}) in order to conclude. This injectivity is only true in high dimension. For small dimensions, it is an open problem to get a full characterisation of the space of geometric counter-terms. It is even not clear that $ \VV^j_{n} $ will be a suitable basis. But one can use the result in high dimension for still getting Theorem~\ref{chain:rule}. Indeed, the renormalisation constants do not change by changing the dimension: We can choose the same parametrisation. In small dimension, we just have an over parametrisation of the renormalised equation. Indeed from \cite[Prop. 3.9]{BGHZ}, there exists $v_{\geo} \in  \CS_{\geo}^{\bot} $
that $ \lim_{\eps \rightarrow 0}  C^{c}_{\geo} = v_{\geo}$. Furthermore,
$ v_{\geo} $ is independent of the choice of mollifier.
Here $\CS_{\geo}^{\bot} $ is the orthogonal of $ \CS_{\geo} $ and $  C^{c}_{\geo} $ is the term in
$ \sum_{\tau \in \SS_4} C_{\eps}(\tau) \frac{ \Upsilon_{\Gamma,\sigma}[\tau](u_{\eps})}{ S(\tau)} $
 that belongs to $\CS_{\geo}^{\bot} $. The renormalisation constants $C_{\eps}(\tau) $ correspond to the BPHZ renormalisation which guarantees the convergence of the solution $ u_{\eps} $.   This allows us to say that one can have finite renormalisation constants 
 on the orthogonal of $ \sum_{\tau \in \VV_{n}^j} C_{\eps}(\tau) \frac{ \Upsilon_{\Gamma,\sigma}[\tau](u_{\eps})}{ S(\tau)} $.
\end{remark}

The first part of the proof of Theorem~\ref{main_theorme_chain_rule} follows the steps of \cite{BGHZ}. In particular, one can define the vector spaces $ \CS_{\<diff>,2n}^g  $ and $ \CS^c_{\<diff>,n} $ with the extra  generator $\<diff>$, and canonical injections of  $ \CS_{2n}^g  $ and $ \CS^c_{n} $ into those spaces. Moreover, one can define a map $\phi_\geo: \CS^{j}  \to \CS^{j}_{\<diff>}  $ with $ j \in \lbrace g,c\rbrace $ by the exact same formula, and \cite[Prop.~6.2]{BGHZ} ensures that 
\begin{equs}
\CS^g_{\geo,2n} = \CS^g_{2n} \cap \ker \hat \phi_\geo, \quad \CS^c_{\geo,n} = \CS^c_{n} \cap \ker \hat \phi_\geo.
\end{equs}
However, the next steps that \cite{BGHZ} successfully accomplished in the case of $\CS^g_{\geo,4}$, that is computing the dimension of $\left\langle \VV^g_{2n} \right\rangle$ and deriving the independent equations characterising the space $ \CS_{2n}^g \cap \ker \hat \phi_\geo$, were out of reach for several years. Our strategy of the proof is drastically different, and brings in methods of category theory and homotopical algebra to describe the vector space $\ker \hat \phi_\geo$. 

The first nontrivial idea is to postpone the identifications of trees for later, and to work with the much bigger spaces of trees where we allow any number of vertices of various colours, and no identifications are yet made. Such an (infinite-dimensional) space of trees is, for the fixed number $k$ of colours (that is, of noises), the $k$-generated free algebras over the operad $\NT$ of Christoffel trees. A very useful seemingly trivial observation is that the map $\hat \phi_{k,\geo}$ is defined on the level of each such free algebra (mapping it to a free algebra with one extra generator), and commutes with the full symmetric group acting on the vertices, and thus commutes with any identifications that we may want to define using the symmetric group action. Thus, the vector space $\hat \phi_\geo$ on the space of trees we are led to consider in the SPDE context is exactly the same as the result of those identifications applied to the vector space $\hat \phi_{k,\geo}$ computed on the free algebra level. 

The next nontrivial idea is to not specify the number of noises yet, but rather work with the operad itself. However, the map $\hat \phi_{k,\geo}$ sends a free algebra with $k$ generators into a free algebra with $k+1$ generators, so one extra generator needs to be added. A clean way to do that is to consider the coproduct of operads $\NT\vee\kk\<diff>$, where $\kk\<diff>$ is a species supported at the empty set viewed as an operad without any compositions possible; it incorporates into our operad an operation without arguments, the generator that we always added in order to define the map $\hat \phi_{k,\geo}$. For each $S$ with $|S|=k$, the map $\hat \phi_{k,\geo}$ can be restricted to $\NT(S)$, and these maps assemble into a map of species 
 \[
\hat \Phi_{\geo}\colon \NT\to \NT\vee\kk\<diff>
 \]
Moreover, the property that states, in terminology of \cite{BGHZ}, that the map $\hat \phi_\geo$ is an infinitesimal morphism of $T_\partial$-algebras translates into the fact that $\hat \Phi_{\geo}$ is an derivation with the values in the bimodule $\NT\vee\kk\<diff>$. 

The last nontrivial reformulation is a homological interpretation of the kernel of the map $\hat \Phi_{\geo}$. For that, we shall need to furnish a different description of that map. Recall that the map $\hat \phi_\geo$ in \cite{BGHZ} is described by local rules allowing to act on individual vertices of a tree. From the operad theory point of view, it is more natural to describe the action of the derivation $\hat \Phi_{\geo}$ on the generators of the operad. The operad $\NT$ is generated by 
\begin{equs}
  \begin{tikzpicture}[scale=0.2,baseline=-5]
    \coordinate (root) at (0,-2);
    \coordinate (center) at (0,2);
    \draw[] (root) -- (center);
    \node[var1] (rootnode) at (center) {$ 2 $};
    \node[var1] (rootnode) at (root) { $  1 $};
  \end{tikzpicture} \, , \quad \begin{tikzpicture}[scale=0.2,baseline=-5]
  \coordinate (root) at (0,-2);
  \coordinate (center) at (0,2);
  \draw[] (root) -- (center);
  \node[var1] (rootnode) at (center) {$ 1 $};
  \node[var1] (rootnode) at (root) { $  2 $};
\end{tikzpicture} \, , \quad \begin{tikzpicture}[scale=0.2,baseline=-5]
\coordinate (root) at (0,-2);
\coordinate (right) at (2,2);
\coordinate (left) at (-2,2);
\draw[kernels2] (root) -- (left);
\draw[kernels2] (root) -- (right);
\node[var1] (rootnode) at (left) {$ 1 $};
\node[var1] (rootnode) at (right) {$ 2 $};
\node[not] (rootnode) at (root) {};
\end{tikzpicture},
\end{equs}
and a direct calculation shows that 
\begin{gather*}
  \hat \Phi_\geo \left( \begin{tikzpicture}[scale=0.2,baseline=-5]
    \coordinate (root) at (0,-2);
    \coordinate (center) at (0,2);
    \draw[] (root) -- (center);
    \node[var1] (rootnode) at (center) {$ 2 $};
    \node[var1] (rootnode) at (root) { $  1 $};
  \end{tikzpicture} \right) = \hat \Phi_\geo \left(  \begin{tikzpicture}[scale=0.2,baseline=-5]
    \coordinate (root) at (0,-2);
    \coordinate (center) at (0,2);
    \draw[] (root) -- (center);
    \node[var1] (rootnode) at (center) {$ 1 $};
    \node[var1] (rootnode) at (root) { $  2 $};
  \end{tikzpicture} \right) =
  \begin{tikzpicture}[scale=0.2,baseline=-5]
  \coordinate (root) at (0,-2);
  \coordinate (right) at (2,2);
  \coordinate (left) at (-2,2);
  \draw[] (root) -- (left);
  \draw[] (right) -- (root);
  \node[var1] (rootnode) at (left) {$ 1 $};
  \node[var1] (rootnode) at (right) {$ 2 $};
  \node[diff] (rootnode) at (root) {};
  \end{tikzpicture} ,\\
  \hat \Phi_\geo \left(\begin{tikzpicture}[scale=0.2,baseline=-5]
    \coordinate (root) at (0,-2);
    \coordinate (right) at (2,2);
    \coordinate (left) at (-2,2);
    \draw[kernels2] (root) -- (left);
    \draw[kernels2] (root) -- (right);
    \node[var1] (rootnode) at (left) {$ 1 $};
    \node[var1] (rootnode) at (right) {$ 2 $};
    \node[not] (rootnode) at (root) {};
  \end{tikzpicture} \right) = -2\begin{tikzpicture}[scale=0.2,baseline=-5]
  \coordinate (root) at (0,-2);
  \coordinate (right) at (2,2);
  \coordinate (left) at (-2,2);
  \draw[] (root) -- (left);
  \draw[] (right) -- (root);
  \node[var1] (rootnode) at (left) {$ 1 $};
  \node[var1] (rootnode) at (right) {$ 2 $};
  \node[diff] (rootnode) at (root) {};
\end{tikzpicture}.
\end{gather*} 

Since in the operadic twisting $\Tw(\PL)$, we have
\begin{equs}
d_{\Tw} \left( \,   \begin{tikzpicture}[scale=0.2,baseline=-5]
  \coordinate (root) at (0,-2);
  \coordinate (center) at (0,2);
  \draw[] (root) -- (center);
  \node[var1] (rootnode) at (center) {$ 2 $};
  \node[var1] (rootnode) at (root) { $  1 $};
\end{tikzpicture} \, \right)   = \, \begin{tikzpicture}[scale=0.2,baseline=-5]
    \coordinate (root) at (0,-2);
    \coordinate (right) at (2,2);
    \coordinate (left) at (-2,2);
    \draw[] (root) -- (left);
    \draw[] (right) -- (root);
    \node[var1] (rootnode) at (left) {$ 1 $};
    \node[var1] (rootnode) at (right) {$ 2 $};
    \node[var1] (rootnode) at (root) {$ \alpha $};
  \end{tikzpicture},
\end{equs}
this immediately implies that on the suboperad $\PL\subset \NT\vee\kk\<diff>$ made of trees that do not have thick edges or $\<diff>$ vertices, the action of $\hat{\Phi}_{\geo}$ agrees precisely with the way the differential in $\Tw(\PL)$ acts on $\PL$, up to renaming $\<diff>$ into~$\alpha$. Since we already saw that allowing thick edges amounts to considering the coproduct $\PL\vee\ComMag$, it is reasonable to consider the coproduct of operads $\Tw(\PL)\vee\ComMag$. Its differential on trees without special vertices does not quite match the map $\hat{\Phi}_{\geo}$, but the mismatch is easily fixed as follows.

\begin{proposition} \label{correspondence_ker_phi_0}
Consider the unique derivation $d_0$ of the coproduct of operads $\PL\vee\ComMag\vee\kk\alpha$ that annihilates the suboperad $\PL\vee\kk\alpha$ and sends the generator of $\ComMag$ to 
\begin{equs}
- 2 \, \begin{tikzpicture}[scale=0.2,baseline=-5]
    \coordinate (root) at (0,-2);
    \coordinate (right) at (2,2);
    \coordinate (left) at (-2,2);
    \draw[] (root) -- (left);
    \draw[] (right) -- (root);
    \node[var1] (rootnode) at (left) {$ 1 $};
    \node[var1] (rootnode) at (right) {$ 2 $};
    \node[var1] (rootnode) at (root) {$ \alpha $};
  \end{tikzpicture}.
\end{equs}
The map $\mathrm{d}=d_{\MC}+\mathrm{ad}_{\ell_1^{\alpha}}+d_0$ makes $\PL\vee\ComMag\vee\kk\alpha$ into a differential graded operad. The degree zero homology of that operad is naturally isomorphic to $\ker\hat{\Phi}_{\geo}$.
\end{proposition}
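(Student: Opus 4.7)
The plan is to verify the two assertions of the proposition directly: that $\mathrm{d}$ squares to zero, and that the degree zero homology agrees with $\ker\hat{\Phi}_{\geo}$.

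For the first assertion, I would use the decomposition $\mathrm{d}=d_{\Tw}+d_0$ with $d_{\Tw}=d_{\MC}+\mathrm{ad}_{\ell_1^{\alpha}}$ and analyse separately the three contributions to $\mathrm{d}^2$: $d_{\Tw}^2$, the anticommutator $d_{\Tw}d_0+d_0d_{\Tw}$, and $d_0^2$. The vanishing of $d_{\Tw}^2$ is the standard output of operadic twisting recalled in Section~\ref{appendix_A}: all one needs is a map $\Lie\to\PL\hookrightarrow\PL\vee\ComMag\vee\kk\alpha$ (the antisymmetrisation of the pre-Lie generator), together with the formal identities $d_{\MC}^2=0$, $[d_{\MC},\mathrm{ad}_{\ell_1^\alpha}]=0$, and $\mathrm{ad}_{\ell_1^\alpha}^2=\tfrac12\mathrm{ad}_{[\ell_1^\alpha,\ell_1^\alpha]}=0$ from graded Lie algebra theory. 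Both $d_{\Tw}d_0+d_0d_{\Tw}$ and $d_0^2$ are derivations of the free coproduct, so it suffices to test them on generators. On the $\PL$-generators and on $\alpha$, the map $d_0$ vanishes and $d_{\Tw}$ lands in $\PL\vee\kk\alpha$, where $d_0$ again vanishes, so both expressions are zero. On the $\ComMag$-generator $\mu$, the image $d_0(\mu)$ lies in $\PL\vee\kk\alpha$ and hence $d_0^2(\mu)=0$ immediately, while the identity $d_{\Tw}(d_0(\mu))+d_0(d_{\Tw}(\mu))=0$ is an explicit finite check unfolding $\mathrm{ad}_{\ell_1^\alpha}(\mu)$ as a finite sum of Christoffel trees with one $\alpha$-vertex, and matching it against $d_{\Tw}$ applied to the $\alpha$-rooted corolla.

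For the second assertion, the key observation is that $\alpha$ is the only generator of nonzero homological degree and $|\alpha|=-1$, so the operad is concentrated in non-positive degrees with $C_0\cong\NT$ by Proposition~\ref{operad_christoffel}. Since there is nothing in positive degree to contribute boundaries, one has $H_0(\mathrm{d})=\ker\bigl(\mathrm{d}\colon C_0\to C_{-1}\bigr)$. To identify this kernel with $\ker\hat{\Phi}_{\geo}$, I would show that $\mathrm{d}|_{C_0}$ and $\hat{\Phi}_{\geo}$ coincide as operadic derivations from $\NT$ to the bimodule $C_{-1}\cong\NT\vee\kk\alpha$. Both are characterised by their values on the three generators of $\NT=\PL\vee\ComMag$. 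On the two $\PL$-generators the action of $\mathrm{d}$ reduces to $\mathrm{ad}_{\ell_1^\alpha}$, whose tree-level description recalled in Section~\ref{appendix_A} produces precisely the $\alpha$-rooted thin corolla prescribed by the formula for $\hat{\Phi}_{\geo}$. On the $\ComMag$-generator $\mu$, the calibration $d_0(\mu)=-2\,(\alpha\text{-rooted thin corolla})$ is designed exactly to ensure that $\mathrm{ad}_{\ell_1^\alpha}(\mu)+d_0(\mu)$ matches the prescribed value $\hat{\Phi}_{\geo}(\mu)=-2\,(\alpha\text{-rooted thin corolla})$; checking this reduces to a direct combinatorial manipulation of Christoffel trees in which the intermediate trees generated by $\mathrm{ad}_{\ell_1^\alpha}(\mu)$ cancel in mirror-symmetric pairs.

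The main technical obstacle will be the careful tree bookkeeping required in both verifications on $\mu$: $\mathrm{ad}_{\ell_1^\alpha}(\mu)$ produces several Christoffel trees in which an $\alpha$-vertex is attached by a thin edge either above or below the black vertex of $\mu$, or as an extra thin child of one of the thick inputs, and one must verify that the chosen sign and coefficient of $d_0(\mu)$ together with the mirror symmetry of these contributions yield precisely the expected cancellations, both for $\mathrm{d}^2(\mu)=0$ and for the matching $\mathrm{d}(\mu)=\hat{\Phi}_{\geo}(\mu)$. Once this combinatorics is pinned down, the equality of the two derivations on generators extends by the derivation property to all of $\NT$, giving $\ker(\mathrm{d}|_{C_0})=\ker\hat{\Phi}_{\geo}$ and hence the stated identification of the degree zero homology.
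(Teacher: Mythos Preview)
Your approach matches the paper's: decompose $\mathrm{d}=d_{\Tw}+d_0$, reduce $\mathrm{d}^2=0$ to a check on generators via the derivation property, and identify $H_0$ with $\ker\hat\Phi_{\geo}$ by matching the two derivations on the binary generators of $\NT\cong\PL\vee\ComMag$. The only refinement the paper offers over your sketch is that, rather than leaving $d_{\Tw}d_0(\mu)+d_0d_{\Tw}(\mu)=0$ as an explicit finite check, it argues each piece vanishes for a conceptual reason---$d_{\Tw}d_0(\mu)=0$ because $d_0(\mu)$ lies in the image of $d_{\Tw}$, and $d_0d_{\Tw}(\mu)=0$ because $d_{\Tw}$ already annihilates the $\ComMag$-generator---so the ``mirror-symmetric cancellation'' bookkeeping you anticipate is not actually needed.
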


\begin{proof}
Let us first show that $\mathrm{d}^2=0$. Since $\mathrm{d}=d_{\Tw}+d_0$, the endomorphism $\mathrm{d}$ is an operad derivation, and hence $[\mathrm{d},\mathrm{d}]=2\mathrm{d}^2$ is also an operad derivation (note that the graded commutator of endomorphisms of odd homological degree is their anticommutator), so it is enough to show that $\mathrm{d}^2$ vanishes on generators of the operad $\PL\vee\ComMag\vee\kk\alpha$. We have
$\mathrm{d}^2=d_{\Tw}d_0+d_0d_{\Tw}+d_0^2$, since $d_{\Tw}^2=0$. If we apply $\mathrm{d}^2$ to the generators of the suboperad $\PL\vee\kk\alpha$, the three terms vanish individually since $d_0$ annihilates that suboperad. If we apply $\mathrm{d}^2$ to the generator of $\ComMag$, these terms also vanish individually but in a more subtle way: the first term vanishes, since $d_0$ sends that generator to an element in the image of $d_{\Tw}$, the second term vanishes since this generator is already annihilated by $d_{\Tw}$, and the last term vanishes since $d_0$ annihilates the suboperad $\PL\vee\kk\alpha$.  

To show that the degree zero homology of our operad is naturally isomorphic to $\ker\hat{\Phi}_{\geo}$, we note that $\MC(\PL)\vee\ComMag$ is concentrated in negative degrees, and therefore there is no quotient to form: the degree zero homology of $\mathrm{d}$ is simply $\ker\mathrm{d}$. Moreover, since the homological degree of $\alpha$ is defined to be $-1$, the degree zero part of $\PL\vee\ComMag\vee\kk\alpha$ is precisely $\PL\vee\ComMag$, and the differential $\mathrm{d}$ was specifically designed in such a way that we have a commutative diagram 
 \[
\begin{tikzcd}
  \NT \arrow[r, "\hat{\Phi}_{\geo}"] \arrow[d]
    & \NT\vee\kk\<diff> \arrow[d] \\
  \PL\vee\ComMag \arrow[r, "\mathrm{d}"]
& \PL\vee\ComMag\vee\kk\alpha 
\end{tikzcd} ,
 \]
with vertical arrows being isomorphisms given by Proposition~\ref{operad_christoffel}, which proves the last assertion. 
\end{proof}

Our main problem is now reformulated in terms of computing homology. We shall now perform that homology computation; the following result is the main  algebraic result of the article, and the key ingredient in the proof of Theorem~\ref{main_theorme_chain_rule}.

\begin{theorem}\label{th:twisting}
The homology of the operad 
 \[
(\PL\vee\ComMag\vee \kk\alpha, d_{\MC}+\mathrm{ad}_{\ell_1^{\alpha}}+d_0)
 \] 
is concentrated in homological degree zero and is isomorphic to the operad $\LA$ of Lie-admissible algebras. 
\end{theorem}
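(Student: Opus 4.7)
The plan is to compute the homology via a spectral sequence argument treating $d_0$ as a perturbation of the twisting differential $d_{\Tw}=d_{\MC}+\mathrm{ad}_{\ell_1^{\alpha}}$. The key reorganization is to recognize that $(\PL\vee\ComMag\vee\kk\alpha, d_{\Tw})$ is nothing but the coproduct of differential graded operads $\Tw(\PL)\vee\ComMag$, where $\ComMag$ carries the trivial differential. The perturbation $d_0$ vanishes on the generators of $\Tw(\PL)$ and sends the generator of $\ComMag$ to an element of $\Tw(\PL)$, that is, to a tree carrying strictly fewer thick-edge vertices. Consequently, filtering the coproduct by the number of $\ComMag$-generators (thick-edge vertices) produces an increasing, bounded-below, exhaustive filtration $F_\bullet$ on which $d_{\Tw}$ preserves the filtration while $d_0$ strictly decreases it by one.

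The associated spectral sequence therefore has
\[
E_0 = \Tw(\PL)\vee\ComMag, \qquad d^{(0)} = d_{\Tw},
\]
and the differential $d^{(0)}$ acts only on the $\Tw(\PL)$-parts of the coproduct. Using the K\"unneth-type formula for coproducts of augmented differential graded operads recalled in the excerpt (which follows from the iterated description $\calO_1\vee\calO_2=\mathbbold{1}\oplus\overline{\calO_1}\circ(\calO_1\vee\calO_2)_2\oplus\overline{\calO_2}\circ(\calO_1\vee\calO_2)_1$ together with the K\"unneth formula for composition products of dg species), I would conclude
\[
E_1 \;=\; H_\bullet(\Tw(\PL)\vee\ComMag) \;\cong\; H_\bullet(\Tw(\PL))\vee H_\bullet(\ComMag) \;=\; H_\bullet(\Tw(\PL))\vee\ComMag.
\]
Invoking the known computation of the homology of the operadic twisting of the pre-Lie operad, namely $H_\bullet(\Tw(\PL))\cong\Lie$ concentrated in homological degree zero (this is the key external input; it is available in the literature cited in the excerpt), the $E_1$-page becomes $\Lie\vee\ComMag$, which is precisely the operad $\LA$ of Lie-admissible algebras by the polarisation argument already recalled.

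Since $E_1$ is concentrated in homological degree zero, all subsequent differentials $d^{(r)}$, $r\geq 1$, vanish for degree reasons, so the spectral sequence collapses at $E_1$ and delivers an isomorphism of graded species between the homology of our operad and $\LA$, with the homology automatically concentrated in degree zero. The remaining task is to verify that this isomorphism is compatible with the operadic composition: this follows because the filtration is by a numerical invariant compatible with partial compositions, so the induced multiplicative structure on $E_\infty$ agrees with that on the coproduct $\Lie\vee\ComMag$, and the generators match on the nose (the classes of the pre-Lie binary operations descend to the Lie bracket, and the generator of $\ComMag$ survives as the commutative product).

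The main obstacle I anticipate is rigorously importing the statement $H_\bullet(\Tw(\PL))\cong\Lie$ in the right conventions and, more subtly, ensuring that the K\"unneth formula for coproducts of dg operads is applied in a way that respects the differential graded structure of $\Tw(\PL)$ (rather than the coproduct of underlying graded operads forgetting the differential). Once these two inputs are in place, the filtration is so well-behaved that the collapse of the spectral sequence and the identification of the operadic structure on $E_\infty$ are essentially formal.
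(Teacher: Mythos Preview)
Your proposal is correct and follows essentially the same approach as the paper. The paper filters by the number of thick edges (so $d_0$ decreases the filtration by two), while you filter by the number of $\ComMag$-generators (so $d_0$ decreases it by one); since each black vertex carries exactly two thick edges, these are the same filtration up to reindexing, and the remainder of the argument---identifying the associated graded as $\Tw(\PL)\vee\ComMag$, invoking $H_\bullet(\Tw(\PL))\cong\Lie$, and collapsing at $E_1$ for degree reasons---is identical. One small point: for the Classical Convergence Theorem you want the filtration to be bounded, not just bounded below and exhaustive; this holds because for fixed arity $|I|$ and fixed homological degree the number of black vertices in a Christoffel tree is bounded (each black vertex has at least two inputs, so a simple edge count gives $B\le |I|-d-1$), which is exactly the check the paper makes.
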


\begin{proof}
Let us consider the filtration of the dg species
 \[
(\PL\vee\ComMag\vee \kk\alpha, d_{\MC}+\mathrm{ad}_{\ell_1^{\alpha}}+d_0),
 \]  
defining $\mathrm{F}_p (\PL\vee\ComMag\vee \kk\alpha)$ to be the span of all trees with at most $2p$ thick edges. The maps $d_{\MC}$ and $[\ell_1^{\alpha},-]$ preserve the number of thick edges of the tree, and the part $d_0$ decreases it by two, so this is a filtration by subcomplexes. Note that this filtration is bounded in the sense of \cite[Def.~5.4.2]{MR1269324}, since for each given finite set $I$ and each given homological degree $d$, the degree $d$ part of the component $(\PL\vee\ComMag\vee \kk\alpha)(I)$ of our dg species is spanned by trees with $|I|+d$ vertices, so their number of edges is bounded. Thus, we may use the Classical Convergence Theorem \cite[Th.~5.5.1]{MR1269324}, so for each $I$ the spectral sequence associated to this filtration converges to the homology of the complex $(\PL\vee\ComMag\vee \kk\alpha)(I)$. Since in the associated graded chain complex with respect to this filtration, the deformed part $d_0$ disappears, the spectral sequence computation starts with computing the homology of 
 \[
(\PL\vee\ComMag\vee\kk\alpha, d_{\MC}+\mathrm{ad}_{\ell_1^{\alpha}})\cong \Tw(\PL)\vee\ComMag,
 \]
and we clearly have
\begin{multline}
H_\bullet(\Tw(\PL)\vee\ComMag)\cong H_\bullet(\Tw(\PL))\vee\ComMag\\ \cong \Lie\vee\ComMag\cong\LA.
\end{multline}
Here the first isomorphism follows from the fact that coproducts of augmented differential graded operads commute with homology, the second isomorphism follows from the isomorphism $H_\bullet(\Tw(\PL))\cong \Lie$ established in \cite{dotsenko2021homotopical}, and the coproduct factorisation of the operad $\LA$ was already mentioned above. Since the homology is concentrated in degree zero, there are no room for further differentials, and the spectral sequence abuts at the first page, so the result follows. 
\end{proof}

The following corollary establishes Theorem \ref{main_theorme_chain_rule} for the full subcritical regime. 

\begin{corollary}\label{cor:gaussian-dim}
For an integer $k>0$, let us denote by $U_k$ the subspace of the free Lie-admissible algebra $\LA(x_1,x_2,\ldots,x_k)$ in $k$ generators consisting of elements of degree two in each of the $k$ generators. We have the natural vector space isomorphism 
 \[
\CS_{\tiny{\geo},2n}^g\cong U_1^{\Sigma_1}\oplus U_2^{\Sigma_2}\oplus \cdots \oplus U_n^{\Sigma_n},
 \]
where $U_k^{\Sigma_k}$ is the vector space of elements of $U_k$ that are invariant under the action of the group $\Sigma_k$ permuting the generators $x_1,x_2,\ldots,x_k$. All elements of $\CS_{\tiny{\geo},2n}^g$ are obtained as linear combinations of iterations of covariant derivatives. 
\end{corollary}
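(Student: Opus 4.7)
The plan is to bootstrap from the operadic statement $\ker\hat{\Phi}_{\geo}\cong\LA$ (Theorem~\ref{th:twisting} combined with Proposition~\ref{correspondence_ker_phi_0}) to a statement about the specific vector spaces of decorated trees arising in the SPDE context. The key conceptual point is that the map $\hat\phi_{\geo}$ acting on trees with coloured noises is obtained from the universal map $\hat{\Phi}_{\geo}$ by first specialising the free $\NT$-algebra to a finite set of generators and then taking suitable (co)invariants with respect to permutations of noises; since these two operations commute with $\hat{\Phi}_{\geo}$ (which is defined equivariantly at the species level), the kernel of $\hat\phi_{\geo}$ is computed by applying them to the species $\ker\hat{\Phi}_{\geo}\cong\LA$.

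First I would decompose $\SS^g_{2n}$ according to the number $k$ of distinct noise colours that appear, with $1\le k\le n$. For a fixed $k$, the trees using exactly $k$ colours, each appearing twice, correspond, before any identification, to elements of the free $\NT$-algebra on $k$ generators $x_1,\ldots,x_k$ that have multidegree exactly $(2,\ldots,2)$, i.e.\ depending on each $x_i$ to the second order. Gaussian pairing forces no further identification beyond this multidegree constraint (the pairing of the two copies of each $x_i$ is tautological once the colouring is fixed), while the convention from \eqref{identification_trees} that colours with the same number of vertices can be swapped amounts exactly to taking $\Sigma_k$-invariants on this component.

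Next, since $\hat{\Phi}_{\geo}$ is defined on the entire operad $\NT$ and is equivariant under all symmetric group actions, the procedures of restricting to the multidegree-$(2,\ldots,2)$ part and of taking $\Sigma_k$-invariants commute with taking the kernel. Combining this with Proposition~\ref{correspondence_ker_phi_0} and Theorem~\ref{th:twisting} then produces the isomorphism
\[
\CS_{\tiny{\geo},2n}^g \;\cong\; \bigoplus_{k=1}^{n} \bigl(\LA(x_1,\ldots,x_k)_{(2,\ldots,2)}\bigr)^{\Sigma_k} \;=\; \bigoplus_{k=1}^{n} U_k^{\Sigma_k}.
\]
The final assertion about iterations of covariant derivatives then follows from the fact that $\LA\cong \Lie\vee\ComMag$ is generated by two binary operations which correspond precisely to the antisymmetric and symmetric parts of the covariant derivative operation $\nabla$ defined in \eqref{covariant_derivative_semi_linear}; iterated operadic compositions of these two generators therefore produce exactly iterated covariant derivatives.

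The main obstacle I anticipate is the bookkeeping step showing that the identification of trees under swapping of equi-populated colours matches precisely with the passage to $\Sigma_k$-invariants on the multidegree-$(2,\ldots,2)$ component of the $k$-generated free algebra, and that the partition of $\SS^g_{2n}$ by the number of distinct colours matches the direct-sum decomposition above with no over- or under-counting. This is largely formal once one writes down the species-level isomorphisms carefully, but it is the point where one must be convinced that no spurious identifications have been introduced or omitted when passing from the universal operadic picture to the finite-dimensional SPDE setting.
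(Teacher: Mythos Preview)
Your proposal is correct and follows essentially the same route as the paper: decompose by the number $k$ of noise colours, identify the colour-swapping equivalence with passage to $\Sigma_k$-(co)invariants, use equivariance of $\hat\Phi_{\geo}$ together with Maschke's theorem to commute taking the kernel with taking invariants, and then invoke Proposition~\ref{correspondence_ker_phi_0} and Theorem~\ref{th:twisting}. The one place where the paper is more explicit is the final assertion: rather than appealing abstractly to the generators of $\LA\cong\Lie\vee\ComMag$, the paper checks by a direct one-line computation that the combinatorial covariant derivative element of \eqref{covariant_derivative_semi_linear} lies in $\ker\mathrm{d}$, and then uses that $\LA$ is generated by its binary part to conclude; your argument would benefit from including this verification, since the spectral sequence identification of the homology with $\LA$ does not by itself tell you which degree-zero cycles represent the generators.
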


\begin{proof}
First of all, let us note that $\CS_{\tiny{\geo},2n}^g$ naturally splits into a direct sum of subspaces spanned by trees having exactly $2k$ vertices (which we think of as $k$ pairs of equal vertices representing the same noise). For the given $k$, we may use our identification  made by renumbering, and look at Christoffel trees with two vertices numbered $1$, two vertices numbered $2$, \ldots, two vertices numbered $k$, up to renumbering. This means that we pass to coinvariants of the symmetric group $\Sigma_k$ acting on labels. Over a field of zero characteristic, it is equivalent to passing to invariants of that action, which is what we shall do. 

We now note that the map $\hat{\phi}_{\geo}$ is also defined on the level of Christoffel trees without any identifications, and that it commutes with the symmetric group $\Sigma_k$ acting on labels, so, due to the Maschke's theorem on complete reducibility of representations of $\Sigma_k$, we can choose the order in which we compute the kernel of $\hat{\phi}_{\geo}$ and pass to $\Sigma_k$-invariants. We shall choose to first compute the kernel and then pass to invariants. To compute the kernel, we shall use the previous result. We know that the homology of the differential graded operad
 \[
(\PL\vee\ComMag\vee \,\alpha, d_{\MC}+[\ell_1^{\alpha},-]+d_0)
 \]
is concentrated in homological degree zero and is isomorphic to the operad $\LA$ of Lie-admissible algebras. Considering, instead of the operad (all labels different) vertices of $k$ types amounts to evaluating our operad on the vector space $W:=\mathrm{Vect}(x_1,\ldots,x_k)$, forming the vector space 
 \[
\bigoplus_{n\ge 0} (\PL\vee\ComMag\vee \kk\alpha)(n)\otimes_{\kk \Sigma_n}W^{\otimes n}.
 \]
It follows from the K\"unneth formula that the homology is concentrated in homological degree zero and is given by 
 \[
\bigoplus_{n\ge 0} \LA(n)\otimes_{\kk \Sigma_n}W^{\otimes n}, 
 \] 
which is precisely $\LA(x_1,x_2,\ldots,x_k)$. Passing to invariants of the symmetric group action corresponds to imposing our equivalence relation on trees, and considering elements of degree two in each of the $k$ generators corresponds precisely to the space of Christoffel trees on which we seek to determine the kernel of $\hat{\phi}_{\geo}$.

Finally, we note that for the differential $\mathrm{d}=d_{\MC}+[\ell_1^{\alpha},-]+d_0$, we have
\begin{equs}
  \mathrm{d} \left( \begin{tikzpicture}[scale=0.2,baseline=-5]
    \coordinate (root) at (0,-2);
    \coordinate (center) at (0,2);
    \draw[] (root) -- (center);
    \node[var1] (rootnode) at (center) {$ 2 $};
    \node[var1] (rootnode) at (root) { $  1 $};
  \end{tikzpicture} \right) = \mathrm{d} \left(  \begin{tikzpicture}[scale=0.2,baseline=-5]
    \coordinate (root) at (0,-2);
    \coordinate (center) at (0,2);
    \draw[] (root) -- (center);
    \node[var1] (rootnode) at (center) {$ 1 $};
    \node[var1] (rootnode) at (root) { $  2 $};
  \end{tikzpicture} \right) =  \mathrm{d} \left( -\frac{1}{2} \begin{tikzpicture}[scale=0.2,baseline=-5]
    \coordinate (root) at (0,-2);
    \coordinate (right) at (2,2);
    \coordinate (left) at (-2,2);
    \draw[kernels2] (root) -- (left);
    \draw[kernels2] (root) -- (right);
    \node[var1] (rootnode) at (left) {$ 1 $};
    \node[var1] (rootnode) at (right) {$ 2 $};
    \node[not] (rootnode) at (root) {};
  \end{tikzpicture} \right) = \begin{tikzpicture}[scale=0.2,baseline=-5]
  \coordinate (root) at (0,-2);
  \coordinate (right) at (2,2);
  \coordinate (left) at (-2,2);
  \draw[] (root) -- (left);
  \draw[] (right) -- (root);
  \node[var1] (rootnode) at (left) {$ 1 $};
  \node[var1] (rootnode) at (right) {$ 2 $};
  \node[var1] (rootnode) at (root) {$ \alpha $};
\end{tikzpicture}.
\end{equs} 
so the element 
\begin{equs}
   \begin{tikzpicture}[scale=0.2,baseline=-5]
    \coordinate (root) at (0,-2);
    \coordinate (center) at (0,2);
    \draw[] (root) -- (center);
    \node[var1] (rootnode) at (center) {$ 2 $};
    \node[var1] (rootnode) at (root) { $  1 $};
  \end{tikzpicture} + \frac{1}{2} \begin{tikzpicture}[scale=0.2,baseline=-5]
    \coordinate (root) at (0,-2);
    \coordinate (right) at (2,2);
    \coordinate (left) at (-2,2);
    \draw[kernels2] (root) -- (left);
    \draw[kernels2] (root) -- (right);
    \node[var1] (rootnode) at (left) {$ 1 $};
    \node[var1] (rootnode) at (right) {$ 2 $};
    \node[not] (rootnode) at (root) {};
  \end{tikzpicture}  
\end{equs} 
is in the kernel of $\mathrm{d}$. Since we know that the homology operad is concentrated in degree zero and is generated by binary operations, this element is precisely the generator of $\LA$, so all elements of $\CS_{\tiny{\geo},2n}^g$  are obtained as linear combinations of iterations of covariant derivatives.
\end{proof}

The following corollary establishes Theorem \ref{main_theorme_chain_rule} for noises described by cumulants.

\begin{corollary}\label{cor:cumulant-dim}
For integers $n_2,\ldots,n_p\in\mathbb{N}$, let us denote by $U_{2,\ldots,2,\ldots,p,\ldots,p}$ the subspace of the free Lie-admissible algebra in $n_2+\cdots+n_p$ generators consisting of elements of degree two in each of the first $n_2$ generators, \ldots, of degree $n_p$ in each of the last $n_p$ generators. We have the natural vector space isomorphism 
 \[
\CS_{\tiny{\geo},n}^c\cong \bigoplus_{2n_2+\cdots+p n_p\leq n} U_{2,\ldots,2,\ldots,p,\ldots,p}^{\Sigma_{n_2}\times\cdots\times \Sigma_{n_p}}
 \]
where $U_{2,\ldots,2,\ldots,p,\ldots,p}^{\Sigma_{n_2}\times\cdots\times \Sigma_{n_p}}$ is the vector space of elements of $U_{2,\ldots,2,\ldots,p,\ldots,p}$ that are invariant under the action of the group $\Sigma_{n_2}\times\cdots\times \Sigma_{n_p}$ permuting the groups of generators separately. All elements of $\CS_{\tiny{\geo},n}^c$ are obtained as linear combinations of iterations of covariant derivatives. 
\end{corollary}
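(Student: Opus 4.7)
The proof will closely parallel the argument for Corollary \ref{cor:gaussian-dim}, with the combinatorial bookkeeping adjusted to accommodate the partition structure coming from cumulants. The starting observation is that $\CS_{\tiny{\geo},n}^c$ decomposes naturally according to the ``shape'' of the partition of noise vertices: every decorated tree in $\SS^c$ specifies, for each colour used, how many vertices of that colour appear, and these multiplicities can only equal the sizes of parts in the partition encoding the cumulant. I will therefore begin by writing
\[
\CS_{\tiny{\geo},n}^c \;=\; \bigoplus_{2n_2+3n_3+\cdots+pn_p\leq n} \CS_{\tiny{\geo}}^c(n_2,\ldots,n_p),
\]
where $\CS_{\tiny{\geo}}^c(n_2,\ldots,n_p)$ is spanned (before identifications) by Christoffel trees with $n_r$ groups of $r$ vertices sharing a colour, for $2\le r\le p$, modulo the identification of trees related by swapping two whole groups of the same size.

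Next, I will lift the entire problem to the level of Christoffel trees before any identifications are made, exactly as in the Gaussian case. The map $\hat{\phi}_{\geo}$ commutes with the symmetric group $\Sigma_{n_2+\cdots+n_p}$ permuting all labels, and in particular with the subgroup $\Sigma_{n_2}\times\cdots\times\Sigma_{n_p}$ that permutes labels within each block of equal size. The key identification of trees made in the cumulant setting corresponds precisely to taking coinvariants of the product $(\Sigma_2)^{\times n_2}\times\cdots\times(\Sigma_p)^{\times n_p}\rtimes(\Sigma_{n_2}\times\cdots\times\Sigma_{n_p})$ acting on labels; the first factor is invisible because vertices of the same colour are not distinguished, so only the wreath product structure with $\Sigma_{n_2}\times\cdots\times\Sigma_{n_p}$ remains. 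By Maschke's theorem I may swap the order of operations, first computing $\ker\hat{\phi}_{\geo}$ on the whole space of Christoffel trees with the prescribed multi-degree, and only then passing to invariants.

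To compute the kernel, I will invoke Proposition \ref{correspondence_ker_phi_0} together with the main algebraic result Theorem \ref{th:twisting}, evaluated on the vector space $W = \mathrm{Vect}(x^{(r)}_j : 2\le r\le p,\, 1\le j\le n_r)$. By the K\"unneth formula, the homology of
\[
\bigoplus_{m\ge 0} (\PL\vee\ComMag\vee\kk\alpha)(m)\otimes_{\kk\Sigma_m} W^{\otimes m}
\]
is concentrated in homological degree zero and is isomorphic to the free Lie-admissible algebra $\LA(W)$. Restricting to the multi-degree $(2,\ldots,2,\ldots,p,\ldots,p)$ (with $n_r$ copies of each $r$) singles out the summand $U_{2,\ldots,2,\ldots,p,\ldots,p}$. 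Passing to $\Sigma_{n_2}\times\cdots\times\Sigma_{n_p}$-invariants (which again equals coinvariants in characteristic zero) implements the required identification of trees and yields the asserted isomorphism. The last sentence of the corollary, concerning iterations of covariant derivatives, follows as in the Gaussian case, since $\LA$ is generated by its binary operation and the element
\[
\begin{tikzpicture}[scale=0.2,baseline=-5]
\coordinate (root) at (0,-2); \coordinate (center) at (0,2);
\draw[] (root) -- (center);
\node[var1] at (center) {$ 2 $}; \node[var1] at (root) {$ 1 $};
\end{tikzpicture}
\;+\;\tfrac12\;
\begin{tikzpicture}[scale=0.2,baseline=-5]
\coordinate (root) at (0,-2); \coordinate (right) at (2,2); \coordinate (left) at (-2,2);
\draw[kernels2] (root) -- (left); \draw[kernels2] (root) -- (right);
\node[var1] at (left) {$ 1 $}; \node[var1] at (right) {$ 2 $}; \node[not] at (root) {};
\end{tikzpicture}
\]
already encountered in the Gaussian case is precisely a representative of that binary generator.

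The main obstacle I anticipate is purely bookkeeping: making sure the identification between decorated trees in $\SS^c$ and multi-homogeneous pieces of the free $\LA$-algebra on the generating set matches correctly under the $\Sigma_{n_r}$-actions, in particular that the ``swap colours of parts with the same number of elements'' rule used to define $\SS^c$ agrees with the permutation of the subblocks $\{x^{(r)}_1,\ldots,x^{(r)}_{n_r}\}$. Once this compatibility is spelled out, the homological content of the argument is essentially that of Corollary \ref{cor:gaussian-dim}, and no further input is needed.
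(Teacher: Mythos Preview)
Your proposal is correct and follows the same approach as the paper. The paper's proof is a single sentence stating that the argument is completely analogous to that of Corollary~\ref{cor:gaussian-dim} (computing $\Sigma_{n_2}\times\cdots\times\Sigma_{n_p}$-invariants commutes with computing the homology, so Theorem~\ref{th:twisting} applies); you have simply written out the details of that analogy, including the decomposition by partition shape and the reduction to the multi-homogeneous component of the free $\LA$-algebra, which the paper leaves implicit.
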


\begin{proof}
The argument is completely analogous to that of Corollary \ref{cor:gaussian-dim}: computing $\Sigma_{n_2}\times\cdots\times \Sigma_{n_p}$-invariants commutes with computing the homology, so we may use the result of Theorem \ref{th:twisting}. 
\end{proof}

In the following section, we shall explain how to compute dimensions of our vector spaces by appropriate manipulations with generating functions, which will complete the proof of Theorem \ref{main_theorme_chain_rule}.

\subsection{Dimension counting}

\label{dimension_counting_sec}

Let us explain how our results can be used to compute the dimensions of the vector spaces $\CS_{\tiny{\geo},2n}^g$ and $\CS_{\tiny{\geo},n}^c$. First, let us explain the passage from operads to free algebras. Let $\calO$ be an arbitrary operad, and consider the free algebra $\calO(x_1,\ldots,x_k)$ with $k$ generators. This free algebra carries a natural representation of the general linear group $GL_k$, and one may want to compute the character of that representation (which is a polynomial in variables $t_1,\ldots,t_k$ given by the trace of the diagonal matrix with entries $t_1,\ldots,t_k$). It turns out that this character can be easily computed from $F_{\calO}$ by using the power sum interpretation alluded to earlier, that is by substituting $p_r=\sum_{i=1}^k t_i^r$. For our purposes of computation of invariants such as $U_k^{\Sigma_k}$ or $U_{2,\ldots,2,\ldots,p,\ldots,p}^{\Sigma_{n_2}\times\cdots\times \Sigma_{n_p}}$, we shall specialise to appropriate subgroups of $GL_k$. 

Before any specialisation, let us note that since the operad $\LA$ is Koszul, one can compute $F_{\LA}$ by computing the compositional inverse of 
 \[
-F_{\LA^!}(-p_i\colon i\ge 1)=1-\exp\left(-\sum_i \frac{p_i}{i}\right)-1-\frac{p_1^2+p_2}{2}
 \]
(this form of the character of the quadratic dual is due to the explicit description of the quadratic dual mentioned earlier). That compositional inverse calculation is a built-in procedure of the symmetric functions implementation in \texttt{sage} \cite{sagemath}, so no human computational power is required here; however, a determined reader can verify that the first few terms of that compositional inverse in the elementary symmetric function basis are
 \[
e_1 + e_1^2 + 2e_1^3-e_3 + 5e_1^4-e_2e_1^2-2e_3e_1+e_4 +\cdots
 \]

Recall now that the vector space $U_k$ consists of elements of the free Lie-admissible algebra with $k$ generators of degree two in each of the $k$ generators. Keeping track of that kind of homogeneity is easy using diagonal matrices from $GL_k$ acting on generators: we are interested in elements of the free algebra that are multiplied by $t_1^2t_2^2\cdots t_k^2$ under this action. We also want to compute $\Sigma_k$-invariants, so we should keep track of the action of the symmetric group on generators. Overall, the subgroup of $GL_k$ that is of interest to us is the semidirect product $\Sigma_k\ltimes \mathrm{Diag}_k$, where $\mathrm{Diag}_k$ is the group of invertible diagonal matrices. For the matrix $\sigma a$, where $\sigma\in \Sigma_k$, $a=\mathrm{diag}(t_1,\ldots,t_k)\in \mathrm{Diag}_k$, the characteristic polynomial $\det(X \sigma a-\mathrm{id})$ of its action on generators is easily seen to be equal to 
 \[
\prod_{c} (X^k-t_{i_1}\cdots t_{i_k}),
 \]
where $c=(i_1,\ldots,i_k)$ is a cycle of $\sigma$. Since 
 \[
\det(X \sigma a-\mathrm{id})=\sum_{i=0}^k(-1)^{k-i}X^ie_i(\sigma a),
 \]
this means that to compute the character of $\sigma$ on the homogeneous component of degree $2$ in each generator, one has to consider the evaluation of $F_{\LA}$ at $e_i=e_i(\sigma a)$, and take the coefficient of $(t_1t_2\cdots t_k)^2$. After that, computing the scalar product with the character of the trivial representation of $\Sigma_k$ gives the dimension of the space of invariants.

Let us demonstrate how to use this recipe to compute $\dim\CS_{\tiny{\geo},8}^g$. This means that we should look at $k=1,2,3,4$. We obtain the following results.
\begin{itemize}
\item[$k=1$] In this case, we can only have $\sigma=\mathrm{id}$, and $e_1=t_1$, so we obtain $t_1^2$ with coefficient $1$, so $\dim(U_1^{\Sigma_1})=1$. 
\item[$k=2$] In this case, we can have, up to conjugation, $\sigma=\mathrm{id}$ or $\sigma=(1, 2)$. In the first case, the characteristic polynomial is $(X-t_1)(X-t_2)$, so we have $e_1(\sigma a)=t_1+t_2$, $e_2(\sigma a)=t_1t_2$, and $e_i=0$ for $i>2$, therefore $5e_1^4-e_2e_1^2-2e_3e_1+e_4$ is evaluated to $5(t_1+t_2)^4-t_1t_2(t_1+t_2)^2$, which contains $t_1^2t_2^2$ with coefficient $28$. In the second case, the characteristic polynomial is $X^2-t_1t_2$, so we have $e_1(\sigma a)=0$, $e_2(\sigma a)=-t_1t_2$, and $e_i=0$ for $i>2$, therefore $5e_1^4-e_2e_1^2-2e_3e_1+e_4$ is evaluated to $0$. Computing the scalar product with the character of the trivial representation, we get $\frac12(28+0)=14$, so $\dim(U_2^{\Sigma_2})=14$. (At this point, we already recover the dimension $15$ of \cite{BGHZ} as the sum $\dim(U_1^{\Sigma_1})+\dim(U_2^{\Sigma_2})$.)
\item[$k=3$] In this case, the degree six component of $F_{\LA}$ is
 \[
40e_1^6-19e_2e_1^4+6e_2^2e_1^2-2e_2^3-17e_3e_1^3+10e_3e_2e_1-3e_3^2+5e_4e_1^2-e_4e_2-2e_5e_1+e_6,
 \]
which we can already truncate to 
 \[
40e_1^6-19e_2e_1^4+6e_2^2e_1^2-2e_2^3-17e_3e_1^3+10e_3e_2e_1-3e_3^2,
 \] 
since we work in the free Lie-admissible algebra on three generators. Furthermore, in the group $\Sigma_3$ we can have, up to conjugation, $\sigma=\mathrm{id}$, $\sigma=(1, 2)$, or $\sigma=(1, 2, 3)$. In the first case, the characteristic polynomial is $(X-t_1)(X-t_2)(X-t_3)$, so we have $e_1(\sigma a)=t_1+t_2+t_3$, $e_2(\sigma a)=t_1t_2+t_1t_3+t_2t_3$, $e_3(\sigma a)=t_1t_2t_3$ and $e_i=0$ for $i>3$; the evaluation of the element above contains $t_1^2t_2^2t_3^2$ with coefficient $2919$. In the second case, the characteristic polynomial is $(X^2-t_1t_2)(X-t_3)$, so we have $e_1(\sigma a)=t_3$, $e_2(\sigma a)=-t_1t_2$, $e_3(\sigma a)=-t_1t_2t_3$, and $e_i=0$ for $i>3$; the evaluation of the element above contains $t_1^2t_2^2t_3^2$ with coefficient $13$. Finally, in the third case, the characteristic polynomial is $X^3-t_1t_2t_3$, so we have $e_1(\sigma a)=0$, $e_2(\sigma a)=0$, $e_3(\sigma a)=t_1t_2t_3$, and $e_i=0$ for $i>3$; the evaluation of the element above obviously contains $t_1^2t_2^2t_3^2$ with coefficient $-3$.
Computing the scalar product with the character of the trivial representation, we get $\frac16(2919+3\cdot 13-2\cdot 3)=492$, so $\dim(U_3^{\Sigma_3})=492$.
\item[$k=4$] In this case, the degree eight component of $F_{\LA}$ is 
\begin{multline*}
380e_1^8-260e_2e_1^6+141e_2^2e_1^4-61e_2^3e_1^2+11e_2^4-176e_3e_1^5+89e_3e_2e_1^3-14e_3e_2^2e_1\\
-4e_3^2e_1^2+51e_4e_1^4-20e_4e_2e_1^2+4e_4e_2^2+5e_4e_3e_1-4e_4^2-18e_5e_1^3+5e_5e_2e_1\\+8e_6e_1^2-2e_6e_2-2e_7e_1+e_8,
\end{multline*}
which we can already truncate to 
\begin{multline*}
380e_1^8-260e_2e_1^6+141e_2^2e_1^4-61e_2^3e_1^2+11e_2^4-176e_3e_1^5+89e_3e_2e_1^3-14e_3e_2^2e_1\\
-4e_3^2e_1^2+51e_4e_1^4-20e_4e_2e_1^2+4e_4e_2^2+5e_4e_3e_1-4e_4^2,
\end{multline*}
since we work in the free Lie-admissible algebra on three generators. Furthermore, in the group $\Sigma_4$ we can have, up to conjugation, $\sigma=\mathrm{id}$, $\sigma=(1, 2)$, $\sigma=(1, 2, 3)$, $\sigma=(1, 2, 3,4)$, or $\sigma=(1, 2) (3, 4)$. In the first case, the characteristic polynomial is $(X-t_1)(X-t_2)(X-t_3)(X-t_4)$, so we have $e_1(\sigma a)=t_1+t_2+t_3+t_4$, $e_2(\sigma a)=t_1t_2+t_1t_3+t_1t_4+t_2t_3+t_2t_4+t_3t_4$, $e_3(\sigma a)=t_1t_2t_3+t_1t_2t_4+t_1t_3t_4+t_2t_3t_4$, $e_4(\sigma a)=t_1t_2t_3t_4$ and $e_i=0$ for $i>4$; the evaluation of the element above contains $t_1^2t_2^2t_3^2t_4^2$ with coefficient $698946$. In the second case, the characteristic polynomial is $(X^2-t_1t_2)(X-t_3)(X-t_4)$, so we have $e_1(\sigma a)=t_3+t_4$, $e_2(\sigma a)=t_3t_4-t_1t_2$, $e_3(\sigma a)=-t_1t_2(t_3+t_4)$, $e_4(\sigma a)=-t_1t_2t_3t_4$ and $e_i=0$ for $i>4$; the evaluation of the element above contains $t_1^2t_2^2t_3^2t_4^2$ with coefficient $974$. In the third case, the characteristic polynomial is $(X^3-t_1t_2t_3)(X-t_4)$, so we have $e_1(\sigma a)=t_4$, $e_2(\sigma a)=0$, $e_3(\sigma a)=t_1t_2t_3$, $e_4(\sigma a)=t_1t_2t_3t_4$, and $e_i=0$ for $i>4$; the evaluation of the element above obviously contains $t_1^2t_2^2t_3^2$ with coefficient $-3$. In the fourth case, the characteristic polynomial is $X^4-t_1t_2t_3t_4$, so we have $e_1(\sigma a)=0$, $e_2(\sigma a)=0$, $e_3(\sigma a)=0$, $e_4(\sigma a)=-t_1t_2t_3t_4$, and $e_i=0$ for $i>4$; the evaluation of the element above obviously contains $t_1^2t_2^2t_3^2$ with coefficient $-4$. Finally, in the last case, the characteristic polynomial is $(X^2-t_1t_2)(X^2-t_3t_4)$, so we have $e_1(\sigma a)=0$, $e_2(\sigma a)=-t_1t_2-t_3t_4$, $e_3(\sigma a)=0$ $e_3(\sigma a)=t_1t_2t_3t_4$, and $e_i=0$ for $i>4$; the evaluation of the element above obviously contains $t_1^2t_2^2t_3^2$ with coefficient $70$.
Computing the scalar product with the character of the trivial representation, we get $\frac1{24}(698946+974\cdot 6-3\cdot8-6\cdot4+70\cdot3)=29373$, so $\dim(U_4^{\Sigma_4})=29373$.
\end{itemize}
Finally, we get $\dim\CS_{\tiny{\geo},8}^g=1+14+492+29373=29880$. Note that dimensions of these spaces grow so fast that they are completely impossible to determine by hand without any systematic method.

Let us also perform one computation in the case of cumulants, namely the dimension of the vector space $\CS_{\tiny{\geo},7}^c$. We have 
\begin{multline*}
\CS_{\tiny{\geo},7}^c\cong \bigoplus_{2n_2+\cdots+p n_p\leq 7} U_{2,\ldots,2,\ldots,p,\ldots,p}^{\Sigma_{n_2}\times\cdots\times \Sigma_{n_p}}=
U_{2}^{\Sigma_1}\oplus U_{3}^{\Sigma_1}\oplus U_{4}^{\Sigma_1}\oplus U_{2,2}^{\Sigma_2}\oplus U_{5}^{\Sigma_1}\oplus U_{2,3}^{\Sigma_1\times \Sigma_1}\\
\oplus U_{6}^{\Sigma_1}\oplus U_{2,4}^{\Sigma_1\times \Sigma_1}\oplus U_{3,3}^{\Sigma_2}\oplus U_{2,2,2}^{\Sigma_3}
\oplus U_{7}^{\Sigma_1}\oplus U_{2,5}^{\Sigma_1\times \Sigma_1}\oplus U_{3,4}^{\Sigma_1\times \Sigma_1}\oplus U_{2,2,3}^{\Sigma_2\times \Sigma_1}.
\end{multline*}
The strategy is more or less the same as above, except for needing the subgroup $\Sigma_{n_2}\times\cdots\times \Sigma_{n_p}$ of $\Sigma_{n_2+\cdots+n_p}$, and not the whole group. Since $U_{2,2,\ldots,2}$ is the vector space denoted $U_k$ above, we already know $\dim U_{2}^{\Sigma_1}=1$, $\dim U_{2,2}^{\Sigma_2}=14$, and $\dim U_{2,2,2}^{\Sigma_3}=492$. It remains to compute the dimensions of vector spaces $U_{3}^{\Sigma_1}$, $U_{4}^{\Sigma_1}$, $U_{5}^{\Sigma_1}$, $U_{2,3}^{\Sigma_1\times \Sigma_1}$, $U_{6}^{\Sigma_1}$, $U_{2,4}^{\Sigma_1\times \Sigma_1}$, $U_{3,3}^{\Sigma_2}$, $U_{7}^{\Sigma_1}$, $U_{2,5}^{\Sigma_1\times \Sigma_1}$, $U_{3,4}^{\Sigma_1\times \Sigma_1}$, $U_{2,2,3}^{\Sigma_2\times \Sigma_1}$. 

First, we note that some of these vector spaces correspond to partitions in distinct parts, and in such cases the group $\Sigma_{n_2}\times\cdots\times \Sigma_{n_p}$ consists of the unit element only, so we should just compute the dimensions of the corresponding homogeneous components of the free Lie-admissible algebra. This can be alternatively done by looking at the coefficient of the symmetric function $m_\lambda$ in $F_{\LA}$ for the partition $\lambda$ we consider. This way, we get $\dim U_{3}^{\Sigma_1}=2$, $\dim U_{4}^{\Sigma_1}=5$,  $\dim U_{5}^{\Sigma_1}=14$, $\dim U_{6}^{\Sigma_1}=40$, $\dim U_{7}^{\Sigma_1}=122$,  $\dim U_{2,3}^{\Sigma_1\times \Sigma_1}=124$, $\dim U_{2,4}^{\Sigma_1\times \Sigma_1}=530$, $\dim U_{2,5}^{\Sigma_1\times \Sigma_1}=2226$, $\dim U_{3,4}^{\Sigma_1\times \Sigma_1}=3623$.  

It remains to compute $\dim U_{3,3}^{\Sigma_2}$ and $\dim U_{2,2,3}^{\Sigma_2\times \Sigma_1}$. For $\dim U_{3,3}^{\Sigma_2}$, we use the same method as before, considering separately  $\sigma=\mathrm{id}$ or $\sigma=(1, 2)$, but looking at the homogeneous components of degree six component of $F_{\LA}$, which we truncate by setting $e_i=0$ for $i>2$, getting
 \[
40e_1^6-19e_2e_1^4+6e_2^2e_1^2-2e_2^3.
 \]  
For $\sigma=\mathrm{id}$, we have $e_1(\sigma a)=t_1+t_2$ and $e_2(\sigma a)=t_1t_2$, and the corresponding evaluation contains $t_1^3t_2^3$ with coefficient~$696$. For $\sigma=(1, 2)$, we have $e_1(\sigma a)=0$ and $e_2(\sigma a)=-t_1t_2$, and the corresponding evaluation contains $t_1^3t_2^3$ with coefficient~$2$. Computing the scalar product with the character of the trivial representation, we get $\frac12(696+2)=349$, so $\dim U_{3,3}^{\Sigma_2}=349$. To compute $\dim U_{2,2,3}^{\Sigma_2\times \Sigma_1}$, we have to consider $\Sigma_2\subset \Sigma_3$, so the cases $\sigma=\mathrm{id}$ and $\sigma=(1, 2)$ from the case $k=3$ for Gaussian noises above. We work with the homogeneous component of degree seven of $F_{\LA}$, that is,
\begin{multline*}
122e_1^7-75e_2e_1^5+39e_2^2e_1^3-14e_2^3e_1-51e_3e_1^4+21e_3e_2e_1^2\\
-e_3e_2^2-2e_3^2e_1+15e_4e_1^3-3e_4e_2e_1-6e_5e_1^2+e_5e_2+3e_6e_1-e_7,
\end{multline*}
which we truncate by setting $e_i=0$ for $i>3$, getting 
 \[
122e_1^7-75e_2e_1^5+39e_2^2e_1^3-14e_2^3e_1-51e_3e_1^4+21e_3e_2e_1^2-e_3e_2^2-2e_3^2e_1.
 \]
For $\sigma=\mathrm{id}$, we have $e_1(\sigma a)=t_1+t_2+t_3$, $e_2(\sigma a)=t_1t_2+t_1t_3+t_2t_3$, $e_3(\sigma a)=t_1t_2t_3$, and the corresponding evaluation contains $t_1^2t_2^2t_3^3$ with coefficient~$20150$. For $\sigma=(1, 2)$, we have $e_1(\sigma a)=t_3$, $e_2(\sigma a)=-t_1t_2$, $e_3(\sigma a)=-t_1t_2t_3$, and the corresponding evaluation contains $t_1^2t_2^2t_3^3$ with coefficient~$58$. Computing the scalar product with the character of the trivial representation, we get $\frac12(20150+58)=10104$, so $\dim U_{2,2,3}^{\Sigma_2\times \Sigma_1}=10104$. Adding up all the results, we obtain 
 \[
\dim\CS_{\tiny{\geo},7}^c=
17646.
 \]

\section{Quasi-generalised KPZ equation}
\label{sec::4}
We want to consider a quasi-linear version of \eqref{e:genClass}
\begin{equ}[e:genClass quasi]
	\d_t u^\alpha  = a(u)\d_x^2 u^\alpha + \Gamma^\alpha_{\beta\gamma}(u)\,\d_x u^\beta\d_x u^\gamma
	+ K^\alpha_\beta(u)\,\d_x u^\beta
	+h^\alpha(u) + \sigma_i^\alpha(u)\, \xi_i\;.
\end{equ}
We start by looking at the equation when $ a(u) $ is replaced by a parameter $c>0$:
\begin{equ} [e:genClass quasi bis]
	\begin{aligned}
		\partial_t u^{\alpha} - c \partial_x^{2} u^{\alpha}&  =  \left( \Gamma^\alpha_{\beta\gamma}(u) - \partial_{\beta}  a(u) \right) \,\d_x u^\beta\d_x u^\gamma
		\\ &	+ K^\alpha_\beta(u)\,\d_x u^\beta
		+h^\alpha(u) + \sigma_i^\alpha(u)\, \xi_i\;,
	\end{aligned}
\end{equ}
and for $\varrho\in\mathrm{Moll}$, $\eps>0$, denote its renormalisation counter-term by
\begin{equs}
	\sum_{\tau \in \SS_4 } C_{\eps}^{c}(\tau) \frac{\Upsilon^{\alpha}_{ \tilde{\Gamma},\sigma}[\tau]}{S(\tau)}.
\end{equs}
Here, we have supposed that the $ \xi_i $ are independent space-time white noises. The constants $ C_{\eps}^{c}(\tau) $ are smooth functions of the parameter $ c $. The map $ \tilde{\Gamma} $ is equal to $ \Gamma - \partial_{\beta} a $. In the sequel, we will use the short hand notation $ \Upsilon^{\alpha}_{F}[\tau] $ instead of $ \Upsilon^{\alpha}_{ \tilde{\Gamma},\sigma}[\tau] $ where $ F $ denoted the right hand side of \eqref{e:genClass quasi bis}. The main theorem of \cite{BGN} states:
\begin{theorem}\label{thm:main_quasi_KPZ_4}
	Let $a\in\mathcal{C}^6$, $\Gamma^{\alpha}_{\beta \gamma},\sigma_i^\alpha\in \mathcal{C}^{5}$ such that $a$ takes values in $[\lambda,\lambda^{-1}]$ for some $\lambda>0$.
	Let $u^{\alpha}_0\in\CC^r(\mathbb{T})$ for some $r>0$. For every $ \varrho \in \mathrm{Moll} $, $\eps>0$,	
	the renormalised equation of \eqref{e:genClass quasi} is given by:
	\begin{equs}[eq:renorm nonlocal6_bis]
		\d_t u^\alpha_{\eps}  & = a(u_{\eps})\d_x^2 u^\alpha_{\eps} + \Gamma^\alpha_{\beta\gamma}(u_{\eps})\,\d_x u^\beta_{\eps}\d_x u^\gamma_{\eps}
		+ K^\alpha_\beta(u_{\eps})\,\d_x u^\beta_{\eps}
		+h^\alpha(u_{\eps}) + \sigma_i^\alpha(u_{\eps})\, \xi_i^{\eps}\; \\ & + \sum_{\tau \in {\SS}_{4} } C_{\eps}^{a(u_\eps)}(\tau) \frac{\Upsilon_{ F}[\tau](u_{\eps})}{ S(\tau)}\,.
	\end{equs}
	That is, the solution $u_\eps$ of the random PDEs \eqref{eq:renorm nonlocal6_bis} converges as $\eps\to 0$ in probability, locally in time, to a nontrivial limit $u$. Here, one has to choose the functions $C^c_\eps(\tau)$ in such a way that the equations \eqref{eq:renorm nonlocal6_bis} transform according to the chain rule under composition with diffeomorphisms.
\end{theorem}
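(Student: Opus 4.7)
The plan is to leverage Theorem~\ref{chain:rule} in a parameter-dependent form and then substitute the frozen parameter back to obtain the quasi-linear structure. First I would freeze the coefficient by setting $a(u)=c$ in \eqref{e:genClass quasi} for a parameter $c \in [\lambda,\lambda^{-1}]$, which reduces the equation to the semi-linear equation \eqref{e:genClass quasi bis} driven by the heat operator $\partial_t - c\partial_x^2$. The associated kernel $K^c$ plays the role of $K$ but with diffusivity $c$; it depends smoothly on $c$, and one can arrange it so that it integrates to zero on appropriate annuli and satisfies the same Schauder estimates as $K$, uniformly in $c$. Applying the regularity structures black box \cite{reg,BHZ,CH,BCCH} and Theorem~\ref{thm:main renormalisation} to this $c$-parametrised semi-linear equation yields for each $c$ a family of BPHZ renormalisation constants; their dependence on $c$ is smooth because each such constant is an integral of iterated convolutions of $K^c$ and $\partial_x K^c$ against themselves, which depend smoothly on $c$ in all the relevant seminorms.

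Next I would apply the chain rule characterisation of Theorem~\ref{chain:rule} (together with Theorem~\ref{thm_covariant_derivatives}) at every fixed $c$. The latter identifies $\CS^g_{\geo,4}$ as the $15$-dimensional span of the iterated covariant derivatives $\VV^g_4$, so at every $c$ there is an affine family of admissible counter-terms lying in $\CS^g_{\geo,4}$ that enforce the chain rule symmetry. I would then select $C^c_\eps(\tau)$ so that $\sum_{\tau \in \SS_4} C^c_\eps(\tau)\,\Upsilon_F[\tau]/S(\tau)$ lies in this subspace; this selection can be made smoothly in $c$ because the projection onto $\CS^g_{\geo,4}$ in the parametrising space of counter-terms is an affine map with smooth coefficients in $c$, and the BPHZ-fixed component on $\CS_{\geo}^\perp$ already converges to a mollifier-independent finite limit (cf.~Remark~\ref{remark_over_paremetrisation}).

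Finally I would substitute $c = a(u_\eps)$ back to recover the right-hand side of \eqref{eq:renorm nonlocal6_bis}. Since the functions $c \mapsto C^c_\eps(\tau)$ are smooth and $a$ is $\mathcal{C}^6$, this substitution yields a local counter-term which can be incorporated into the regularity structures analysis of the quasi-linear equation as developed in \cite{GH19,G20,BGN}: one treats $a(u_\eps)$ as a composite modelled distribution, and the smooth dependence of the renormalisation on the parameter is precisely what is needed to extend the semi-linear convergence to the quasi-linear setting. Chain rule invariance of \eqref{eq:renorm nonlocal6_bis} is then immediate because, for every admissible value of $c$, the counter-term already lies in the chain-rule invariant subspace, and composition with a diffeomorphism commutes with the scalar substitution $c \mapsto a(u_\eps)$.

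The main obstacle is the last step: one must verify that the $c$-parametrised BPHZ renormalisation continues to yield a converging renormalised solution after replacing $c$ by the solution-dependent quantity $a(u_\eps)$. This is the technical core of quasi-linear regularity structures; it requires stability of the BPHZ construction under smooth variations of the Green's function, combined with bounds on the composite modelled distribution $a(u_\eps)$ uniform in $\eps$ and enough regularity of $a$ to justify substitution inside the nonlinear expressions that build the counter-term. The algebraic side of the argument is comparatively benign, since the chain rule characterisation reduces, for each frozen $c$, to the semi-linear result of Theorem~\ref{chain:rule}, and smoothness in $c$ is inherited pointwise from smoothness of the underlying kernel $K^c$.
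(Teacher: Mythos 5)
There is a genuine gap, and it sits exactly where you declare the argument ``comparatively benign''. The statement you are proving is the result of \cite{BGN} (the paper only recalls it), and the whole point of that result -- visible in the surrounding discussion of \eqref{S_4_parameter} and in the proof of Theorem~\ref{thm:main renormalisation_intro_quasi} -- is that the frozen-parameter analysis of the quasi-linear equation does \emph{not} simply return the semi-linear counter-term with $c$ replaced by $a(u_\eps)$. When one runs the Gerencs\'er--Hairer/\cite{BGN} scheme, one first renormalises an implicit system whose counter-terms are indexed by the strictly larger set $\hat{\SS}_4$ of trees carrying parameter decorations on edges (encoding convolutions with $\partial_c^m P$ and $\partial_c^m \partial_x P$), and they come multiplied by the non-local factor $1/q$ with $q^{\alpha}(u_\eps)=1-\partial_\alpha a'(u_\eps)\,\partial_c u_\eps^{\alpha}$. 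These extra terms are genuinely non-local in $u_\eps$, so the local form \eqref{eq:renorm nonlocal6_bis} is not something you get by substitution plus ``stability of BPHZ under smooth variation of the Green's function''; it has to be \emph{derived}, and the chain-rule constraint is the mechanism that makes it possible. Concretely, one needs the compatibility condition \eqref{assumption_derivatives} tying the constants of parameter-decorated trees to $c$-derivatives of the base constants, the decomposition of the counter-term into a part generated by the parameter-dependent covariant derivatives \eqref{covariant_c}--\eqref{covariant_derivative_high_order} plus a part in $\CS_{\geo}^{\bot}$, and then the identities $\Upsilon_{F}[\nabla^m_{\tau_2}\tau_1]=0$ for $m>1$ and $\Upsilon_{\hat F}[\nabla_{\tau_2}\tau_1]+\Upsilon_{\hat F}[\nabla^1_{\tau_2}\tau_1]=q\,\Upsilon_{F}[\nabla_{\tau_2}\tau_1]$ (\cite[Prop.~3.12, Thm.~3.13]{BGN}) which are precisely what cancels the $1/q$ factor and the parameter-derivative trees, collapsing $\hat{\SS}_4$ to $\SS_4$. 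Your proposal never produces or handles $\hat{\SS}_4$, $q$, or the condition \eqref{assumption_derivatives}, so the step ``substitute $c=a(u_\eps)$ back to recover the right-hand side of \eqref{eq:renorm nonlocal6_bis}'' is exactly the assertion to be proved, not a consequence of smoothness in $c$.

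A related conceptual slip: you present chain-rule invariance as an afterthought (``immediate because composition with a diffeomorphism commutes with the scalar substitution''), whereas in the actual argument the restriction of the counter-term to the covariant-derivative span $\VV_4$ is an input needed to obtain locality of the counter-term at all; the theorem's final sentence (``one has to choose the functions $C^c_\eps(\tau)$ in such a way that\ldots'') records this necessity. Your first two steps -- smooth $c$-dependence of the BPHZ constants for the constant-diffusivity equation and the pointwise-in-$c$ use of Theorems~\ref{thm_covariant_derivatives} and \ref{chain:rule} -- are fine and do appear in the real proof, but without the implicit-system/parameter-decoration analysis the argument does not close.
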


In \cite{BGN}, the equation considered was for $d=m=1$ but all the results remain true for the general case, except the long time existence (see also Section \ref{open_problems}).

In fact, in order to obtain the correct counter-terms in the theorem above one has first to renormalise an implicit system that contains non-local terms in $ u_{\eps} $. One obtains the following terms
\begin{equs} 
	\sum_{\tau \in \hat{\SS}_{4} } C_{\eps}^{a(u_{\eps})}(\tau) \frac{\Upsilon_{ \hat{F}}[\tau]}{q S(\tau)}(u_{\varepsilon})
\end{equs}
where $ q^{\alpha}(u_{\eps}) = 1- \partial_{\alpha} a'(u_{\eps}) \partial_c u^{\alpha}_{\eps}$ and  $ \hat{\SS}_{4} $ is a bigger combinatorial set that contains $ \SS_{4} $. The map $ \hat{F} $ is the non-linearity of the implicit system. Then, the chain rule symmetry, the fact that we focus on terms generated by the covariant derivatives, gives
\begin{equs} \label{S_4_parameter}
	\sum_{\tau \in \hat{\SS}_{4} } C_{\eps}^{a(u_{\varepsilon})}(\tau) \frac{\Upsilon_{ \hat{F}}[\tau]}{qS(\tau)}(u_{\varepsilon}) = 	\sum_{\tau \in \SS_{4} } C_{\eps}^{a(u_{\varepsilon})}(\tau) \frac{\Upsilon_{ F}[\tau]}{S(\tau)}(u_{\varepsilon}).
\end{equs}
Let us explain how the set $ \hat{\SS}_{4} $ is obtained. First, it contains $ \SS_4 $. Then, one has extra terms coming from a new decoration called parameter decoration on the edges. This decoration takes values in   $ \mathbb{N} $ and it is bounded by some $ m $. These new trees are partially planar in the sense that the planar order of edges with different parameter decorations matters. 
For example, one has
\begin{equs}
	\begin{tikzpicture}[baseline=0cm,scale=0.4];
		\node[xi] at (1,1) (a) {};
		\node[xi] at (-1,1) (b) {};
		\draw[blue] (a) to (0,0);
		\draw[blue] (b) to (0,0);
		\node at (0.7,0.3) {\tiny $\ell$};
		\node at (-0.8,0.3) {\tiny $m$};
		\node[not] at (0,0) (c) {};
	\end{tikzpicture} \neq \begin{tikzpicture}[baseline=0cm,scale=0.4];
		\node[xi] at (1,1) (a) {};
		\node[xi] at (-1,1) (b) {};
		\draw[blue] (a) to (0,0);
		\draw[blue] (b) to (0,0);
		\node at (0.7,0.3) {\tiny $ m$};
		\node at (-0.8,0.3) {\tiny $\ell$};
		\node[not] at (0,0) (c) {};
	\end{tikzpicture}
\end{equs}
where one has the following
interpretation for the new decorated edges
\begin{equs}
	\begin{tikzpicture}[baseline=0cm,scale=0.4];
		\draw[blue] (0,1) to (0,0);
		\node at (-0.5,0.5) {\tiny $ m$};
	\end{tikzpicture}  \equiv \partial_c^m P * \cdot, \quad 	\begin{tikzpicture}[baseline=0cm,scale=0.4];
		\draw[kernels2] (0,1) to (0,0);
		\node at (-0.5,0.5) {\tiny $ m$};
	\end{tikzpicture}   \equiv \partial_c^m \partial_x P * \cdot.
\end{equs} 
Here  $P(c,\cdot) $ is the Green's function of the operator $ \partial_t - c \partial_x^2  $.
This new decorated trees could been as augmented decorated trees and one can use the notation $\hat\tau=\d^{i_1}\otimes\cdots\otimes\d^{i_{[\tau]}}\otimes\tau$ where the $\d^{i_1}$ correspond to the $c$ derivative on the associated edge $i_1$. We denote by $ \hat{\SS}_{\xi} $ the extension of $\SS_{\xi}$.

 To each of the symbol $ \hat{\tau} $, one associates a function $C_\eps(\tau)(\cdot)$ in $[\tau]$ variables (set of edges in $ \tau $).
The renormalisation is then given by
\begin{equs} \label{assumption_derivatives}
	 C^c_\eps(\hat\tau)=\d^{i_1}_{c_1}\cdots\d^{i_{[\tau]}}_{c_{[\tau]}}
	C_\eps(\tau)(c_1,\ldots,c_{[\tau]})|_{c_1=\cdots c_{[\tau]}=c}.
\end{equs}
The previous identity is a crucial assumption to make on the renormalisation constants. Indeed, one does not want to introduce too many degrees of freedom. One can interpret \eqref{assumption_derivatives} as the fact that the renormalisation constants for decorated trees without any parameter derivatives determine completly the renormalisation constants for decorated trees with parameter derivatives.
The covariant derivative on decorated trees is defined by:
\begin{equs} \label{covariant_c}
	\Nabla_{\tau_2} \tau_1 &=  c
	\begin{tikzpicture}[scale=0.2,baseline=2]
		\draw[symbols]  (-.5,2.5) -- (0,0) ;
		\draw[tinydots] (0,0)  -- (0,-1.3);
		\node[var] (root) at (0,-0.1) {\tiny{$ \tau_1 $ }};
		\node[var] (diff) at (-0.5,2.5) {\tiny{$ \tau_2 $ }};
		\node[blank] at (0.3,1.25) {\tiny{$0$}};
	\end{tikzpicture} + \frac{1}{2}\;
	\begin{tikzpicture}[scale=0.2,baseline=2]
		\coordinate (root) at (0,0);
		\coordinate (t1) at (-1,2);
		\coordinate (t2) at (1,2);
		\draw[tinydots] (root)  -- +(0,-0.8);
		\draw[kernels2] (t1) -- (root);
		\draw[kernels2] (t2) -- (root);
		\node[not] (rootnode) at (root) {};
		\node[var] (t1) at (t1) {\tiny{$ \tau_1 $}};
		\node[var] (t1) at (t2) {\tiny{$ \tau_2 $}};
		\node[blank] at (-1.2,0.6) {\tiny{$0$}};
		\node[blank] at (1.2,0.6) {\tiny{$0$}};
	\end{tikzpicture} 
\end{equs}
Here, the main difference with $\eqref{covariant_derivative_semi_linear}$ is the parameter $  c $. This is due to the fact that one can perform the following change of variable $ v = c^{-3/2}u(c^{-1}t,c^{-1/2}x) $ which allows to remove the parameter $c$ from the operator and then $ c $ multiplies the vector fields $ \sigma_i  $.
All the results for the chain rule remain valid for these new covariant derivatives namely Theorem~\ref{main_theorme_chain_rule} and the fact that the solution $ u_{\eps} $ converges to a limit and it is invariant under change of coordinates.
Now, we want to consider covariant derivatives that include  parameter derivatives. We first define
\begin{equs} \label{covariant_parameter}
	\Nabla_{\tau_2}^1 \tau_1 &=  \partial(c\cdot)
	\begin{tikzpicture}[scale=0.2,baseline=2]
		\draw[symbols]  (-.5,2.5) -- (0,0) ;
		\draw[tinydots] (0,0)  -- (0,-1.3);
		\node[var] (root) at (0,-0.1) {\tiny{$ \tau_1 $ }};
		\node[var] (diff) at (-0.5,2.5) {\tiny{$ \tau_2 $ }};
		\node[blank] at (0.3,1.25) {\tiny{$1$}};
	\end{tikzpicture} + \frac{1}{2}\; (\partial \cdot )  \left( 
	\begin{tikzpicture}[scale=0.2,baseline=2]
		\coordinate (root) at (0,0);
		\coordinate (t1) at (-1,2);
		\coordinate (t2) at (1,2);
		\draw[tinydots] (root)  -- +(0,-0.8);
		\draw[kernels2] (t1) -- (root);
		\draw[kernels2] (t2) -- (root);
		\node[not] (rootnode) at (root) {};
		\node[var] (t1) at (t1) {\tiny{$ \tau_1 $}};
		\node[var] (t1) at (t2) {\tiny{$ \tau_2 $}};
		\node[blank] at (-1.2,0.6) {\tiny{$1$}};
		\node[blank] at (1.2,0.6) {\tiny{$0$}};
	\end{tikzpicture}  + 
	\begin{tikzpicture}[scale=0.2,baseline=2]
		\coordinate (root) at (0,0);
		\coordinate (t1) at (-1,2);
		\coordinate (t2) at (1,2);
		\draw[tinydots] (root)  -- +(0,-0.8);
		\draw[kernels2] (t1) -- (root);
		\draw[kernels2] (t2) -- (root);
		\node[not] (rootnode) at (root) {};
		\node[var] (t1) at (t1) {\tiny{$ \tau_1 $}};
		\node[var] (t1) at (t2) {\tiny{$ \tau_2 $}};
		\node[blank] at (-1.2,0.6) {\tiny{$0$}};
		\node[blank] at (1.2,0.6) {\tiny{$1$}};
	\end{tikzpicture} \right)
\end{equs}
Here we have used short hand notation. In fact for a renormalisation constant
\begin{equs}
	C^c_{\varepsilon} = 	 C_\eps(c_1 \cdots , c_{[\tau_1]}, \bar{c}_1, \bar{c}_2 , \tilde{c}_1,\cdots , \tilde{c}_{[\tau_2]})|_{c_i= \bar{c}_j = \tilde{c}_{\ell} = c},
\end{equs}
one has
\begin{equs}
	\partial(c C^c_{\varepsilon}) = C^c_{\varepsilon} + \left(   (\partial_{\bar{c}_1} + \partial_{ \bar{c}_2})C_\eps(c_1 \cdots , c_{[\tau_1]}, \bar{c}_1, \bar{c}_2 , \tilde{c}_1,\cdots , \tilde{c}_{[\tau_2]}) \right)|_{c_i= \bar{c}_j = \tilde{c}_{\ell} = c}.
\end{equs} 
The covariant derivative in \eqref{covariant_parameter} could be seen as the derivative of \eqref{covariant_c}. 
More generally, one can set
\begin{equs} \label{covariant_derivative_high_order}
	\Nabla_{\tau_2}^m \tau_1 &=  \partial^m(c\cdot)
	\begin{tikzpicture}[scale=0.2,baseline=2]
		\draw[symbols]  (-.5,2.5) -- (0,0) ;
		\draw[tinydots] (0,0)  -- (0,-1.3);
		\node[var] (root) at (0,-0.1) {\tiny{$ \tau_1 $ }};
		\node[var] (diff) at (-0.5,2.5) {\tiny{$ \tau_2 $ }};
		\node[blank] at (0.45,1.25) {\tiny{$m$}};
	\end{tikzpicture} + \frac{1}{2}\; (\partial^m \cdot )  \left(  \sum_{k+\ell = m} \frac{1}{\ell!k!} 
	\begin{tikzpicture}[scale=0.2,baseline=2]
		\coordinate (root) at (0,0);
		\coordinate (t1) at (-1,2);
		\coordinate (t2) at (1,2);
		\draw[tinydots] (root)  -- +(0,-0.8);
		\draw[kernels2] (t1) -- (root);
		\draw[kernels2] (t2) -- (root);
		\node[not] (rootnode) at (root) {};
		\node[var] (t1) at (t1) {\tiny{$ \tau_1 $}};
		\node[var] (t1) at (t2) {\tiny{$ \tau_2 $}};
		\node[blank] at (-1.3,0.6) {\tiny{$k$}};
		\node[blank] at (1.3,0.6) {\tiny{$\ell$}};
	\end{tikzpicture}
	\right).
\end{equs} 
If we assume that the constants $ C^{c}_{\eps}(\tau) $ are chosen such that the counter-terms without parameter derivatives are generated by the  covariant derivative \eqref{covariant_c}, then the rest of the counter-terms with parameter derivative is generated by all the covariant derivatives given by \eqref{covariant_derivative_high_order}.
 We denote by $ \hat{\VV}_{\xi} $ the extension of $\VV_{\xi}$ that contains the itreated covariant derivatives with paremeter derivative.
\begin{proof}[of Theorem \ref{thm:main renormalisation_intro_quasi}]
We use the BPHZ renormalisation as described in \cite[Eq (3.8)]{GH19}. Then, the renormalisation constants $ C_{\eps}^c(\tau)$ satisfy \eqref{assumption_derivatives}. We perform the following decomposition 
\begin{equs}
	\sum_{\tau \in \hat{\SS}_{\xi} } C_{\eps}^{a(u_{\varepsilon})}(\tau) \frac{\Upsilon_{ \hat{F}}[\tau]}{q S(\tau)}(u_{\varepsilon}) = 
		\sum_{ v \in \hat{\VV}_{\xi} } C_{\eps}^{a(u_{\varepsilon})}(v) \frac{\Upsilon_{ \hat{F}}[v]}{q} (u_{\varepsilon})+ 	\sum_{\tau \in \hat{\SS}_{\xi} } \hat{C}_{\eps}^{a(u_{\varepsilon})}(\tau) \frac{\Upsilon_{ \hat{F}}[\tau]}{q}(u_{\varepsilon})
	\end{equs}
where the constants $ C_{\eps}^{a(u_{\varepsilon})}(v) $ and $ \hat{C}_{\eps}^{a(u_{\varepsilon})}(\tau) $ has been chosen in such a way that  one has
\begin{equs} \label{orthogonal}
		\sum_{\tau \in \SS_{\xi} } \hat{C}_{\eps}^{a(u_{\varepsilon})}(\tau) \tau \in \CS_{\geo,\xi}^{\bot}. 
\end{equs}
In fact, due to the condition \eqref{assumption_derivatives}, it is easy to see that by fixing \eqref{assumption_derivatives}, this implies a choice on $\hat{C}_{\eps}^{a(u_{\varepsilon})}(\tau)$ for every $ \tau \in \hat{\SS}_{\xi} $ that respects the condition \eqref{assumption_derivatives}. This choice gives also the existence of the constants  $C_{\eps}^{a(u_{\varepsilon})}(v)$ satisfying also  \eqref{assumption_derivatives}.
From \cite[Prop. 3.9]{BGHZ}, we know that the $  \hat{C}_{\eps}^{a(u_{\varepsilon})}(\tau)$ for $ \tau \in  \SS_{\xi} $ converge to a finite limit. The result is also true for trees $ \tau \in  \hat{\SS}_{\xi} \setminus  \SS_{\xi} $ as the renormalisation constants are smooth functions in the parameter. This corresponds to the parameter derivation of the integration by parts formulae found in \cite[Lem. 2.4]{Mate19}.

The last step is to observe that one has:
  \begin{equs}
  	\sum_{ v \in \hat{\VV}_{\xi} } C_{\eps}^{a(u_{\varepsilon})}(v) \frac{\Upsilon_{ \hat{F}}[v]}{q}(u_{\varepsilon}) = 	\sum_{ v \in \VV_{\xi} } C_{\eps}^{a(u_{\varepsilon})}(v) \Upsilon_{ F}[v](u_{\varepsilon}).
  	\end{equs}
  This is checked inductively on the basis elements of $ \hat{\VV}_{\xi}  $ by using \cite[Prop. 3.12]{BGN} that allows us to remove high order parameter derivative for any trees $\tau_1$ and $\tau_2$:
  \begin{equs}
  	\Upsilon_{F}[\nabla_{\tau_2}^m \tau_1](u_{\varepsilon}) = 0, \quad m > 1.
  \end{equs}
Then, one has from \cite[Thm. 3.13]{BGN} that
\begin{equs}
		\Upsilon_{\hat F} \left[ \nabla_{\tau_2} \tau_1 \right] + \Upsilon_{\hat F} \left[ \nabla_{\tau_2}^1 \tau_1 \right]= q\Upsilon_{F}\brsq{ \nabla_{\tau_2}\tau_1}
	\end{equs}
where $ \tau_1, \tau_2  $ do not contain any parameter derivative.
\end{proof}

\printbibliography

\end{document}